\documentclass{amsart}
\usepackage{amsmath, amssymb, amsthm, amsfonts}
\usepackage{hyperref}
\usepackage{cleveref}
\usepackage[a4paper,left=20mm,right=20mm,top=30mm,bottom=25mm]{geometry}
\usepackage{blkarray}
\usepackage{enumerate}
\usepackage{xcolor}

\usepackage{graphicx}
\usepackage[all]{xy} 
\usepackage{mathrsfs}
\usepackage{eufrak}
\usepackage{comment}

\theoremstyle{plain}
\newtheorem{thm}{Theorem}[section]
\newtheorem{lem}[thm]{Lemma}
\newtheorem{prop}[thm]{Proposition}

\theoremstyle{definition}

\theoremstyle{remark}
\newtheorem{rem}[thm]{Remark}

\newcommand{\res}{\textrm{res}}

\newcommand{\bl}{\textrm{bl}}
\newcommand{\pr}{\textrm{pr}}

\begin{document}

\title[{Effective results on projective normality of the first and second secant varieties}]{Effective results on projective normality of\\ the first and second secant varieties}

\author{Doyoung Choi}
\address{School of Mathematics, Korea Institute for Advanced Study, 85 Hoegiro, Dongdaemun-gu, Seoul 02455, Republic of Korea}
\email{cdy4019@kias.re.kr}

\author{Jinhyung Park}
\address{Department of Mathematical Sciences, KAIST, 291 Daehak-ro, Yuseong-gu, Daejeon 34141, Republic of Korea}
\email{parkjh13@kaist.ac.kr}

\date{\today}

\thanks{D.C. was supported by a KIAS individual grant from Korea Institute for Advanced Study (MG105101). J.P. was partially supported by the National Research Foundation (NRF) funded by the Korea government (MSIT) (RS-2026-25478877).}

\begin{abstract}
In joint work with Lacini and Sheridan, we proved that the first and second secant varieties of a smooth projective complex variety embedded by the complete linear system of a sufficiently positive line bundle are projectively normal. The purpose of this paper is to establish effective results on how positive the embedding line bundle must be for this result to hold. We also provide effective conditions under which the defining ideal of the first secant variety is generated by cubics, and furthermore, generated by $3 \times 3$-minors of a matrix of linear forms. The latter result gives an effective version of a theorem of Agostini and the second author. 
\end{abstract}

\maketitle

\section{Introduction}

\noindent Let $X$ be a smooth projective complex variety of dimension $n$, and $L$ be a very ample line bundle on $X$ giving an embedding
$$
X \subseteq \mathbb{P} H^0(X, L) = \mathbb{P}^r.
$$
The \emph{$k$-th secant variety} of $X \subseteq \mathbb{P}^r$ is
$$
\Sigma_k = \Sigma_k(X, L) := \overline{\bigcup_{x_0, \ldots, x_{k} \in X} \langle x_0, \ldots, x_{k} \rangle} \subseteq \mathbb{P}^r, 
$$
which is an irreducible projective variety. 
Note that $X=\Sigma_0 \subseteq \Sigma_1 \subseteq \Sigma_2 \subseteq \cdots \subseteq \mathbb{P}^r$.
It is a natural problem to study what kinds of singularities secant varieties possess. After the pioneering work on secant varieties of algebraic curves by Bertram \cite{Bertram} and Vermeire \cite{Vermeire}, there has been substantial work showing that secant varieties have mild singularities when the embedding line bundle is sufficiently positive (see e.g., \cite{Choi.Lacini.Park.Sheridan.25, Chou.Song.18, Ein.Niu.Park.20, Olano.Raychaudhury.Song, Ullery.16}). It was proven that the $k$-th secant variety $\Sigma_k(X, L)$ has normal Du Bois singularities when $k=1$ by Chou--Song \cite[Theorem 1.2]{Chou.Song.18} and Ullery \cite[Corollary C]{Ullery.16}, $n=1$ by Ein--Niu--Park \cite[Theorem 1.1]{Ein.Niu.Park.20}, and $k=2$ or $n=2$ by Choi--Lacini--Sheridan--Park \cite[Theorem A]{Choi.Lacini.Park.Sheridan.25} under the assumption that $L$ is sufficiently positive. Moreover, in all these cases, the secant varieties are projectively normal. 

\medskip

The primary aim of this paper is to make these results effective for $k=1$ and $k=2$. For this purpose, we consider a line bundle on $X$ of the form 
$$
L = \omega_X \otimes H^m \otimes B
$$
with $H$ is very ample and $B$ is nef for an integer $m \geq 0$. Note that $L$ becomes $p$-very ample as soon as $m \geq n+1+p$. Ein--Lazarsfeld \cite[Theorem 1]{Ein.Lazarsfeld.93} showed that if $m \geq n+1$, then $X \subseteq \mathbb{P}^r$ is projectively normal. 
If $L$ is $(2k+1)$-very ample, then $\operatorname{Sing}(\Sigma_k) = \Sigma_{k-1}$
except when $(X, L) = (\mathbb{P}^1, \mathcal{O}_{\mathbb{P}^1}(2k+1))$, in which case, $\Sigma_k = \mathbb{P}^{2k+1}$ \cite[Corollary E]{Choi.Lacini.Park.Sheridan.25}. Since a sharp effective result has already been established for the case of algebraic curves \cite{Ein.Niu.Park.20}, we henceforth assume that $n \geq 2$. 
It is known that $\Sigma_1$ has normal Du Bois singularities when $m \geq 2n+2$ (see \cite[Theorem 1.2]{Chou.Song.18} and \cite[Corollary C]{Ullery.16}) and $\Sigma_1 \subseteq \mathbb{P}^r$ is projectively normal when $m \geq 4n$ (see \cite[Theorem G]{Choi.Lacini.Park.Sheridan.25}). The first main result of this paper is the following theorem, which, in particular, shows that $\Sigma_1 \subseteq \mathbb{P}^r$ is projectively normal as soon as $m \geq 2n+2$. The key point is projective normality, from which the remaining assertions easily follow.

\begin{thm}\label{thm:main1}
Assume that $n \geq 2$. For an integer $m \geq 0$, let $L := \omega_X \otimes H^m \otimes B$ be a line bundle on $X$ with $H$ very ample and $B$ nef. When either $k=1$ and $m \geq 2n+2$ or $k=2$ and $m \geq 3n+3$, we have the following:
\begin{enumerate}
   \item $\Sigma_k=\Sigma_k(X, L)$ has normal Du Bois singularities.
   \item $\Sigma_k \subseteq \mathbb{P} H^0(X, L) = \mathbb{P}^r$ is projectively normal.
   \item $H^i(\Sigma_k, \mathcal{O}_{\Sigma_k}(\ell))=0$ for $i > 0$ and $\ell > 0$. 
   \item $\Sigma_k$ is Cohen--Macaulay if and only if $H^i(X, \mathcal{O}_X)=0$ for $1 \leq i \leq n-1$. In this case, $\Sigma_k \subseteq \mathbb{P}^r$ is arithmetically Cohen--Macaulay.
   \item $\Sigma_k$ has rational singularities if and only if $H^i(X, \mathcal{O}_X)=0$ for $1 \leq i \leq n$. 
\end{enumerate}
\end{thm}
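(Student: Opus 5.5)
The plan is to follow the structure of the non-effective argument in \cite{Choi.Lacini.Park.Sheridan.25}, keeping track of how much positivity each vanishing statement really needs. Throughout, $L = \omega_X \otimes H^m \otimes B$.

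\textbf{Setup.} First I set up the standard resolution of the secant variety. Let $X^{[k+1]}$ be the Hilbert scheme of $k+1$ points; this is smooth or well-behaved enough for $k \le 2$, and we use the relevant nested or blown-up models. Let $E_{k,L} = \sigma_*(L \boxtimes \mathcal{O})$ be the tautological bundle, and let $B^k(L) = \mathbb{P}(E_{k,L})$ be the secant bundle, with $t_k \colon B^k(L) \to \Sigma_k$. Once $L$ is $(2k+1)$-very ample, $t_k$ is birational. This holds for $m \ge 2n+2$ when $k=1$, and for $m \ge 3n+3 \ge n+6$ when $k=2$ (using $n\ge2$).

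\textbf{Reduction to vanishing statements.} Following the inductive scheme there, one shows two things:
\begin{enumerate}
\item $t_{k*}\mathcal{O}_{B^k(L)} = \mathcal{O}_{\Sigma_k}$, together with $R^it_{k*}\mathcal{O}$ being computed from $H^i(X,\mathcal{O}_X)$ along the fibres over $\Sigma_{k-1}$.
\item $H^0(\mathbb{P}^r, \mathcal{O}(\ell)) \to H^0(B^k(L), \mathcal{O}(\ell))$ is surjective for all $\ell \ge 1$, and $H^i(B^k(L), \mathcal{O}(\ell)) = 0$ for $i>0$, $\ell>0$.
\end{enumerate}
Via the projection formula and $\pi_*\mathcal{O}(\ell) = S^\ell E_{k,L}$, both reduce to cohomology on $X^{[k+1]}$, and ultimately on Cartesian products $X^{j}$, of bundles of the form
\[
L^{a_1}\boxtimes\cdots\boxtimes L^{a_j}
\]
twisted by ideals of diagonals. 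Induction on $\ell$ uses the filtration of $S^\ell E_{k,L}$ coming from the standard exact sequences relating $\Sigma_{k-1}$ and $\Sigma_k$. Projective normality of $X$ (Ein--Lazarsfeld, valid for $m \ge n+1$) and of $\Sigma_1$ then feed into the $k=2$ case as the base step.

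\textbf{Effective vanishing.} Each required vanishing has the form
\[
H^i\bigl(X^j,\ \omega_{X^j}\otimes (H^{m})^{\boxtimes j}\otimes N \otimes \mathcal{I}_\Delta^{s}\bigr)=0 ,
\]
or a variant on a blow-up along diagonals. I handle these by Kawamata--Viehweg on the blow-up of the diagonals, or by Castelnuovo--Mumford regularity along the projection $X^j \to X^{j-1}$: the twist by $\mathcal{I}_\Delta$ costs roughly $n+1$ copies of $H$ (as in Ein--Lazarsfeld's regularity argument for $p$-jets). For $k=1$ one needs the ideal of $\Delta \subset X\times X$ twisted by $L\boxtimes L$ with $L$ absorbing $\omega_X$ plus $(n+1)$ plus $(n+1)$ powers of $H$, which gives $2n+2$. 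For $k=2$ one uses the triple diagonal, giving $3n+3$.

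\textbf{Consequences.} Given (1) and (2), the remaining items follow.
\begin{itemize}
\item Normality and the Du Bois property follow from (1) together with the criterion $R t_{k*}\mathcal{O}(-E)=\mathcal{I}_{\Sigma_{k-1}}$, where $E$ is a simple normal crossing-type boundary and $\Sigma_{k-1}$ is Du Bois by induction.
\item Projective normality and (3) follow from (2) and a Leray argument.
\item (4) and (5) come from computing $R^it_{k*}\mathcal{O}$, which is identified with $H^i(X,\mathcal{O}_X)\otimes\mathcal{O}_{\Sigma_{k-1}}$-type sheaves. Combined with Kempf's criterion and local cohomology, this shows that CM is equivalent to $H^i(\mathcal{O}_X)=0$ for $1\le i\le n-1$, and that rationality additionally requires $i=n$. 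Arithmetic CM follows from (3) and projective normality.
\end{itemize}

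\textbf{Main obstacle.} I expect the main obstacle to be the surjectivity in (2) for $k=2$ at the sharp bound $m\ge 3n+3$, especially in low degree $\ell=1,2$, where non-reduced subschemes on the Hilbert scheme $X^{[3]}$ enter. My first attempt there is to push everything to $X\times X^{[2]}$ via the universal family and apply the $k=1$ regularity bound fibrewise.
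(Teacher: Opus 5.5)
Your overall framework is the right one: resolve by $B=\mathbb{P}(E)$ over the Hilbert scheme, push down, and induct on the secant index. But step (2) is false as stated, and this breaks your route to item (3). If $H^i(X,\mathcal{O}_X)\neq 0$ for some $1\le i\le n$, then some $R^jt_{k*}\mathcal{O}_B$ with $j>0$ is nonzero. Your own step (1) says as much, and item (5) confirms it, since $B$ is smooth. Because $\mathcal{O}_B(1)=t_k^*\mathcal{O}(1)$, Leray plus Serre vanishing then give $H^j(B,\mathcal{O}_B(\ell))=H^0(\Sigma_k,R^jt_{k*}\mathcal{O}_B\otimes\mathcal{O}(\ell))\neq 0$ for $\ell\gg 0$. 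Concretely, for $\Sigma_1$ the exceptional divisor is $Z=X^{[1,2]}=\operatorname{Bl}_\Delta(X\times X)$ with $\mathcal{O}_B(1)|_Z=\res_{1,2}^*L$. One finds $H^i(B,\mathcal{O}_B(\ell))\cong H^0(X,L^\ell)\otimes H^i(X,\mathcal{O}_X)$, which is nonzero for an abelian surface.

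What is true, and what is needed, is vanishing twisted by the ideal of the exceptional divisor. Write $k$ for the number of points, so that $\alpha_k\colon B^k=\mathbb{P}(E_{k,L})\to\Sigma_{k-1}$ is the resolution, with exceptional divisor $Z^k_{k-1}$ lying over $\Sigma_{k-2}$. Then $\mathcal{O}_{B^k}(-Z^k_{k-1})=\pi_k^*A_{k,L}\otimes\mathcal{O}_{B^k}(-k)$ with $A_{k,L}=T_{k,L}(-2\delta_k)$. One must show two things: $H^i(X^{[k]},S^{\ell}E_{k,L}\otimes A_{k,L})=0$ for $i>0$, and surjectivity of $S^\ell H^0(X,L)\otimes H^0(A_{k,L})\to H^0(S^\ell E_{k,L}\otimes A_{k,L})$ for $\ell\ge 1$. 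Together with Danila's theorem for $\ell\le k$ and induction, this yields projective normality and $R\alpha_{k*}\mathcal{O}_{B^k}(-Z^k_{k-1})=\mathscr{I}_{\Sigma_{k-2}/\Sigma_{k-1}}$. You invoke the latter for Koll\'ar--Kov\'acs without deriving it, and it is exactly what gives item (3), by induction on the secant index. Note also that $Z^3_2$ is not a simple normal crossing divisor. Its Du Bois property needs an argument; the paper uses inversion of adjunction and canonicity of $X^{[1,2,3]}$.

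The more serious gap is that your ``effective vanishing'' paragraph never produces these vanishings, and this is where the whole content of the theorem lies. Use the Koszul resolution of $S^\ell E_{k,L}$ by $\wedge^jM_{k,L}\otimes S^{\ell-j}H^0(X,L)$, where $M_{k,L}=\ker(H^0(X,L)\otimes\mathcal{O}\to E_{k,L})$. Everything then reduces to $H^i(X^{[k]},\wedge^jM_{k,L}\otimes A_{k,L})=0$ for $0\le j\le i$. These groups involve the kernel bundle $M_{k,L}$, its exterior powers, and the twist by $-2\delta_k$ on the Hilbert scheme. You give no mechanism turning them into diagonal ideals on $X^j$. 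The counts ``$\omega_X+(n+1)+(n+1)$'' and ``triple diagonal gives $3n+3$'' are numerology, not an argument. Meanwhile the low degrees you flag as the main obstacle are harmless, by Danila's theorem.

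In the paper, the critical case $j=i$ proceeds as follows:
\begin{enumerate}
\item It is lifted to $B^k$ via the relative Euler sequence and converted into surjectivity of multiplication maps. This becomes the vanishing of $H^1(B^k,\pi_k^*(M_{k,L}\otimes A_{k,L})\otimes\mathcal{O}_{B^k}(\ell))$ for $\ell\ge 0$.
\item This vanishing, and the analogous one without $M_{k,L}$, is proved by induction on $\ell$ along $Z^k_{k-1}$. For three points one also inducts along $\widetilde{\tau}_{2,3}^*Z_1^2\cong X^{[1,2,3]}$ inside $B^{2,3}$. The induction lands on the nested Hilbert schemes $X^{[1,2]}$, $X^{[2,3]}$, $X^{[1,2,3]}$.
\item There, $\rho^*M_{k,L}$ is the pushforward of the ideal of a universal family $\mathcal{W}$. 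The schemes $X^{[1,2,3]}$ and $X^{[1,2,3,4]}$ are the blow-ups of $X\times X^{[1,2]}$ and $X\times X^{[1,2,3]}$ along these families.
\item Kawamata--Viehweg applies on these nested Hilbert schemes because they have Gorenstein canonical singularities.
\end{enumerate}
Writing the relevant line bundle as $\omega\otimes\rho^*T_{k+1,H}\otimes\rho^*N^n_{k+1,H^k}\otimes(\text{nef})$ forces $m\ge kn+1$. The $k-1$ further powers of $H$ needed to pass from $N_{k,L}=\det E_{k,L}$ to $A_{k,L}$ give exactly $2n+2$ and $3n+3$. Without this, or a genuine substitute, your proposal does not establish the effective bounds.
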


There has been a great deal of work on equations defining secant varieties (see e.g., \cite{Agostini.Park.25, BBF24, BGL13, Choi.Lacini.Park.Sheridan.25, Ein.Niu.Park.20, Raicu, SV2011}). It is well-known that the $k$-th secant variety $\Sigma_k$ is not contained in hypersurfaces of degree $k+1$ in $\mathbb{P}^r$. The best one can hope is that the homogeneous ideal $I(\Sigma_k(X, L))$ is generated in degree $k+2$. This is the case when $n \leq 2$ or $k\leq 2$ and $L$ is sufficiently positive (see \cite[Theorem 1.2]{Ein.Niu.Park.20} for the curve case and \cite[Theorem B]{Choi.Lacini.Park.Sheridan.25} for higher dimensional cases). Moreover, Ein--Lazarsfeld \cite[Theorem 1]{Ein.Lazarsfeld.93} showed that if $L = \omega_X \otimes H^m \otimes B$ with $H$ very ample and $B$ nef and $m \geq n+2$, then the homogeneous ideal $I(X, L)$ is generated by quadrics. The second main result of this paper provides an effective condition on $m$ under which the homogeneous ideal $I(\Sigma_1(X, L))$ of the first secant variety $\Sigma_1 \subseteq \mathbb{P}^r$ is generated by cubics. A sharp effective result has been given in \cite{Ein.Niu.Park.20} for $n=1$, so we assume that $n \geq 2$. 

\begin{thm}\label{thm:main2}
Assume that $n \geq 2$. For an integer $m \geq 0$, let $L:=\omega_X \otimes H^m \otimes B$ be a line bundle on $X$ with $H$ very ample and $B$ nef. If $m \geq 3n+2$, then the homogeneous ideal $I(\Sigma_1(X, L))$ is generated by cubics.
\end{thm}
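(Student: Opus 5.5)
We prove \Cref{thm:main2} by showing that $\mathcal{I}_{\Sigma_1}$ is $4$-regular in the sense of Castelnuovo--Mumford. Together with the projective normality of $\Sigma_1$ from \Cref{thm:main1}(2), which holds because $3n+2 \geq 2n+2$, this will give generation of the saturated homogeneous ideal $I(\Sigma_1)$ in degree $\leq 4$. Since $\Sigma_1$ lies in no quadric, we then need one more step to show that quartic generators are unnecessary. The plan is to run the standard machinery for secant varieties. Let $B^{[2]}$ be the blow-up of $X \times X$ along the diagonal, modulo the swap, so that $B^{[2]} = \operatorname{Hilb}^2 X$. Consider the tautological bundle $E_L = E_{2,L}$ of rank $2$ on $X^{[2]}$, and the resolution $\beta \colon \mathbb{P}(E_L) \to \Sigma_1$. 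Following \cite{Choi.Lacini.Park.Sheridan.25} and \cite{Ullery.16}, we have $\beta_*\mathcal{O}=\mathcal{O}_{\Sigma_1}$. Also $\mathcal{I}_{X} \cdot \mathcal{O}_{\mathbb{P}(E_L)}$ is the ideal of the relative secant divisor $Z \cong X \times X$ sitting over the diagonal-type locus. Cohomology of $\mathcal{O}_{\Sigma_1}(\ell)$ is then computed on $\mathbb{P}(E_L)$, hence on $X^{[2]}$ via $S^\ell E_L$.

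First, we reduce $4$-regularity of $\mathcal{I}_{\Sigma_1}$ to vanishings $H^i(\Sigma_1,\mathcal{O}_{\Sigma_1}(3-i))=0$ for $i\geq 1$, plus surjectivity of $H^0(\mathcal{O}_{\mathbb{P}^r}(3))\to H^0(\mathcal{O}_{\Sigma_1}(3))$. The surjectivity is projective normality. For $i \geq 1$ and $3-i>0$, the vanishings are \Cref{thm:main1}(3). So the remaining cases are $H^3(\mathcal{O}_{\Sigma_1})$ and $H^i(\mathcal{O}_{\Sigma_1}(3-i))$ for $i\geq 4$, with negative twists. These do not vanish in general, since $H^i(\mathcal{O}_X)$ contributes. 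Plain regularity therefore fails, and we instead work with a Koszul/syzygy criterion. Generation of $I(\Sigma_1)$ in degree $3$ is equivalent to $\operatorname{Tor}_1(S/I(\Sigma_1),\mathbb{C})_{q}=0$ for $q\geq 4$. By the Koszul cohomology description, this amounts to surjectivity of
$$
\textstyle\bigwedge^{2} H^0(L) \otimes H^0(\mathcal{O}_{\Sigma_1}(q-2)) \to \ker\bigl(H^0(L)\otimes H^0(\mathcal{O}_{\Sigma_1}(q-1)) \to H^0(\mathcal{O}_{\Sigma_1}(q))\bigr)
$$
for $q\geq 4$, or equivalently to $H^1(\Sigma_1, \bigwedge^2 M_L\otimes \mathcal{O}_{\Sigma_1}(\ell))=0$ for $\ell\geq 2$, where $M_L$ is the kernel bundle pulled back to $\Sigma_1$.

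Second, we compute this $H^1$ on $\mathbb{P}(E_L)$. We pull back $\bigwedge^2 M_L$ along $\beta$ and push forward to $X^{[2]}$. The tautological sequence $0\to M_{E_L}\to H^0(L)\otimes\mathcal{O}\to E_L\to 0$ on $X^{[2]}$ lets us filter $\beta^*\bigwedge^2M_L\otimes\mathcal{O}(\ell)$. The required vanishings then reduce to statements of the form
$$
H^{j}\bigl(X^{[2]}, \textstyle\bigwedge^{a} M_{E_L}\otimes S^{b}E_L\bigr)=0
$$
in a range of $(j,a,b)$ with $b \geq \ell$. Via the quotient $X\times X \leftarrow \operatorname{Bl}_\Delta \to X^{[2]}$, these in turn reduce to vanishings on $X$ and $X \times X$ of the form $H^j(X^{\times s}, \bigwedge^a M_L^{\boxtimes}\otimes L^{\boxtimes b}(-\text{diagonals}))$. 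The Ein--Lazarsfeld method, i.e.\ Castelnuovo--Mumford regularity with respect to $H$ together with Kodaira/Kawamata--Viehweg vanishing for $\omega\otimes H^m\otimes B$, handles these once $m$ is large compared with $n$ times the number of factors involved. Roughly, one loses about $n+1$ in $m$ per extra point and one more for each extra wedge power of $M_L$. Since $\bigwedge^2$ instead of $\bigwedge^1$ appears, the threshold $2n+2$ from \Cref{thm:main1} goes up by $n$, which gives $3n+2$.

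The main obstacle is the second step: controlling $H^1$ of $\bigwedge^2 M_{E_L}\otimes S^\ell E_L$ on $X^{[2]}$ with only $m \geq 3n+2$. This requires handling the exceptional divisor over the diagonal carefully in the regularity bookkeeping. I would try first to use the sequence $0\to \mathcal{I}_Z\to\mathcal{O}\to\mathcal{O}_Z\to 0$ on $\mathbb{P}(E_L)$. This splits the problem into the already known quadric-generation statement for $X$ with $\bigwedge^2M_L$, which holds by \cite{Ein.Lazarsfeld.93} since $m\geq n+2$, and a relative term on $X\times X$. For that relative term, the vanishing follows from Kawamata--Viehweg on $\operatorname{Bl}_\Delta(X\times X)$ applied to $L\boxtimes L(-2E)$ twisted by powers of $H\boxtimes H$. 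This is exactly where the numerical bound $m\geq 3n+2$ should enter.
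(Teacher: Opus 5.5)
\textbf{There is a genuine gap: the vanishing that carries the bound $3n+2$ is never proved.}

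\medskip

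\noindent\textbf{What is fine.} Your reduction matches the paper's. Projective normality and Theorem~\ref{thm:main1}(3) turn cubic generation into $H^1(\Sigma_1,\wedge^2M_{\mathcal O_{\Sigma_1}(1)}\otimes\mathcal O_{\Sigma_1}(\ell))=0$ for $\ell\ge2$. Splitting along $Z=Z_1^2$ then leaves the Ein--Lazarsfeld vanishing for $X$ plus the term
\[
H^1\bigl(B^2,\wedge^2M_{\mathcal O_{B^2}(1)}\otimes\pi_2^*A_{2,L}\otimes\mathcal O_{B^2}(\ell-2)\bigr),\qquad \ell\ge 2 .
\]
Note that $Z\cong X^{[1,2]}=\operatorname{Bl}_\Delta(X\times X)$, not $X\times X$. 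Also, cohomology on $\Sigma_1$ is not computed on $\mathbb P(E_L)$ in general, because $\Sigma_1$ need not have rational singularities. Splitting upstairs is still legitimate, since the Leray edge map $H^1(\Sigma_1,\mathcal F)\to H^1(B^2,\beta^*\mathcal F)$ is injective for locally free $\mathcal F$, but you should say so.

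\medskip

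\noindent\textbf{Where it fails.} The remaining term above is the whole content of the theorem at this bound. Filter $\wedge^2M_{\mathcal O_{B^2}(1)}$ by $0\to\pi_2^*\wedge^2M_{2,L}\to\wedge^2M_{\mathcal O_{B^2}(1)}\to\pi_2^*(M_{2,L}\otimes N_{2,L})\otimes\mathcal O_{B^2}(-1)\to0$ and induct on the twist through $Z_1^2$. What must then be proved is $H^i(X^{[1,2]},\rho_{1,2}^*(M_{2,L}\otimes M_{2,L}\otimes A_{2,L})\otimes N)=0$ for nef $N$, which is a vanishing for a vector bundle.

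\begin{itemize}
\item Kawamata--Viehweg on $\operatorname{Bl}_\Delta(X\times X)$ for the line bundle $(L\boxtimes L)(-2E)$, twisted by powers of $H\boxtimes H$, says nothing about this bundle. It also needs only $m\ge n+2$, so $3n+2$ cannot enter there.
\item Your count of the bound is heuristic, and inconsistent on its own terms: by your rule an extra wedge power costs $1$, not $n$.
\item The reduction to $H^j(X^{\times s},\wedge^aM_L^{\boxtimes}\otimes L^{\boxtimes b}(-\text{diagonals}))$ conflates $M_{2,L}$ with $M_L$. In fact $\rho_{1,2}^*M_{2,L}$ is a proper subbundle of $\tau_{1,2}^*M_L$ with quotient $\res_{1,2}^*L(-F_1)$, so $H^1$-vanishing for it is a surjectivity statement, not a consequence of vanishings for $M_L$.
\item Over a diagonal point $(y,y)$, the nonreduced length-$2$ scheme $\xi$ is not cut out by the product of the two diagonal ideals, which restricts to $\mathfrak m_y^2$ there. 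You give no argument for how either issue would be handled.
\end{itemize}

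\medskip

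\noindent\textbf{The missing idea.} Trade each copy of $M_{2,L}$ for a line bundle on a nested Hilbert scheme.

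\begin{enumerate}
\item Since $\rho_{1,2}^*M_{2,L}=\pr_{2,*}(\mathscr I_{\mathcal W_{1,2}}\otimes(L\boxtimes\mathcal O_{X^{[1,2]}}))$ with vanishing higher direct images, the cohomology moves to $X^{[1,2,3]}=\operatorname{Bl}_{\mathcal W_{1,2}}(X\times X^{[1,2]})$.
\item There, $0\to\rho_{1,2,3}^*M_{3,L}\to\bl_{1,2,3}^*(\mathcal O_X\boxtimes\rho_{1,2}^*M_{2,L})\to\bl_{1,2,3}^*(L\boxtimes\mathcal O_{X^{[1,2]}})(-F_{1,2})\to0$ replaces the second copy by $M_{3,L}$ plus a line bundle.
\item $M_{3,L}$ is moved in the same way to $X^{[1,2,3,4]}=\operatorname{Bl}_{\mathcal W_{1,2,3}}(X\times X^{[1,2,3]})$.
\item Because $X^{[1,2,3,4]}$ has Gorenstein canonical singularities, Kawamata--Viehweg applies to $\omega_{X^{[1,2,3,4]}}\otimes\rho_{1,2,3,4}^*T_{4,H}\otimes\rho_{1,2,3,4}^*N_{4,H^3}^{n}\otimes(\text{nef})$.
\end{enumerate}

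This last step is where $3n+1$ comes from: $N_{4,H^3}$ is globally generated because $H^3$ is $3$-very ample. The extra $1$ comes from $A_{2,L}=N_{2,L}(-\delta_2)$. This is Propositions~\ref{prop:mainvanishing1}(2) and~\ref{prop:mainvanishing2}(1). Without this step, or a genuine substitute, the proposal does not reach the stated bound.
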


Eisenbud--Koh--Stillman \cite{EKS1988} and Sidman--Smith \cite{SS2011} proved that the homogeneous ideal $I(X, L)$ of $X \subseteq \mathbb{P}^r$ is generated by $2 \times 2$-minors of a matrix of linear forms when $L$ is sufficiently positive. 
More precisely, if $L = L_1 \otimes L_2$ such that $L_1:=\omega_X \otimes H^{m_1} \otimes B_1$ and $L_2:=\omega_X \otimes H^{m_2} \otimes B_2$ with $H$ very ample and $B_1, B_2$ nef for integers $m_1, m_2 \geq n+2$, then $I(X, L)$ is generated by $2 \times 2$-minors of  the \emph{catalecticant matrix}
$$
\operatorname{Cat}(L_1, L_2) := \begin{pmatrix} 
\alpha_1 \beta_1 & \alpha_1 \beta_2 & \cdots & \alpha_1 \beta_b \\
\alpha_2 \beta_1 & \alpha_2 \beta_2 & \cdots & \alpha_2 \beta_b \\
\vdots & \vdots & \ddots & \vdots \\
\alpha_a \beta_1 & \alpha_a \beta_2 & \cdots & \alpha_a \beta_b \\
\end{pmatrix},
$$
where $\alpha_1, \ldots, \alpha_a$ is a basis of $H^0(X, L_1)$, and $\beta_1, \ldots, \beta_b$ is a basis of $H^0(X, L_2)$ (see \cite[Theroem 1.3]{SS2011}). Although the catalecticant matrix $\operatorname{Cat}(L_1, L_2)$ depends on the choice of bases of $H^0(X, L_1)$ and $H^0(X, L_2)$, it is elementary to see that the ideal generated by $(k+2) \times (k+2)$-minors of  $\operatorname{Cat}(L_1, L_2)$ is independent of the choice of bases. 
It was conjectured that the homogeneous ideal $I(\Sigma_k(X, L))$ of the $k$-th secant variety $\Sigma_k \subseteq \mathbb{P}^r$ is generated by $(k+2) \times (k+2)$-minors of $\operatorname{Cat}(L_1, L_2)$ when $L=L_1 \otimes L_2$ and $L_1, L_2$ are sufficiently positive (see \cite[Remark in page 518]{EKS1988} and \cite[Conjecture 1.2]{SS2011}). This conjecture was verified when $n \leq 2$ or $k \leq 1$ by Agostini--Park \cite[Theorem B]{Agostini.Park.25}. It is worth noting that the conjecture is known to be false when $X$ is a singular curve \cite{BGL13} or $n$ and $k$ are sufficiently large \cite{BBF24} (see also the discussion right after \cite[Conjecture 1.2]{SS2011}). The final main result of this paper is to make \cite[Theorem B]{Agostini.Park.25} for $k=1$ effective. A fairly nice effective result has been already obtained for $n=1$ \cite[Theorem A]{Agostini.Park.25}, so we assume that $n \geq 2$. An effective result for $\Sigma_1$ to be determinantally presented had been known in the case of Veronese embeddings \cite{Raicu}.

\begin{thm}\label{thm:main3}
Assume that $n \geq 2$. Let $L_1:=\omega_X \otimes H^{m_1} \otimes B_1$ and $L_2:=\omega_X \otimes H^{m_2} \otimes B_2$ with $H$ very ample and $B_1, B_2$ nef for integers $m_1, m_2 \geq 0$, and $L:=L_1 \otimes L_2$. 
If $m_1, m_2 \geq 3n+2$, then the homogeneous ideal $I(\Sigma_1(X, L))$ is generated by $3 \times 3$-minors of $\operatorname{Cat}(L_1, L_2)$. 
\end{thm}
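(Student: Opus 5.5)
We follow the strategy of Agostini--Park \cite[Theorem B]{Agostini.Park.25} and make each step effective. The plan has three parts:
\begin{enumerate}
\item reduce to a statement about cubics;
\item identify the cubic part of $I(\Sigma_1)$ with a kernel of multiplication maps built from $L_1$ and $L_2$;
\item show that the $3\times 3$-minors of $\operatorname{Cat}(L_1,L_2)$ span that kernel.
\end{enumerate}

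\textbf{Reduction to cubics.} Since $m=m_1+m_2$ is far larger than $3n+2$, write $L=\omega_X\otimes H^{m_1+m_2}\otimes(B_1\otimes B_2\otimes\omega_X)$. When $\omega_X\otimes H^{n+1}$ is globally generated, we can instead write $L = \omega_X \otimes H^{m'} \otimes B'$ with $m'\ge 3n+2$ and $B'$ nef. Theorem \ref{thm:main2} then applies, so $I(\Sigma_1(X,L))$ is generated by cubics. It therefore suffices to show that the degree-three piece $I(\Sigma_1)_3$ is spanned by $3\times3$-minors of $\operatorname{Cat}(L_1,L_2)$. Each such minor visibly vanishes on $\Sigma_1$: restricted to a secant line, the catalecticant matrix has rank at most $2$. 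So the content is the reverse inclusion.

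\textbf{Identifying $I(\Sigma_1)_3$.} By Theorem \ref{thm:main1}, $\Sigma_1$ is projectively normal, since $m\ge 2n+2$. Hence $I(\Sigma_1)_3$ is the kernel of
\[
S^3H^0(L)\longrightarrow H^0(\Sigma_1,\mathcal O_{\Sigma_1}(3)).
\]
Via the standard description of $H^0(\mathcal O_{\Sigma_1}(\ell))$ through the secant bundle over $X^{[2]}$, the space of cubics vanishing on $\Sigma_1$ is the kernel of
\[
S^3H^0(L)\to H^0(X\times X, \ldots)
\]
given by the polarization $f\mapsto$ (coefficient of $s^2t$). Equivalently, it is the kernel of $S^3H^0(L)\to H^0(L^2)\otimes H^0(L)$ composed with symmetrization.

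\textbf{Spanning by minors.} Following Agostini--Park, $3\times3$-minors of $\operatorname{Cat}(L_1,L_2)$ span the image of $\wedge^3H^0(L_1)\otimes\wedge^3H^0(L_2)\to S^3H^0(L)$. To show this image is the whole kernel, I would filter by Koszul-type complexes and reduce to the following vanishing and surjectivity statements, which are the effective inputs:
\begin{itemize}
\item surjectivity of multiplication maps $H^0(L_i)\otimes H^0(L_1^{a}\otimes L_2^{b})\to H^0(\cdots)$;
\item the property $N_1$ (in fact $N_2$) for $L_1$, $L_2$, and $L$;
\item vanishing of $H^1$ of kernel bundles $M_{L_i}$ and $\wedge^2 M_{L_i}$ twisted by $L_j$, and similar vanishings on $X\times X$ or $X^{[2]}$ for the tautological bundles $E_{L_i}$.
\end{itemize}
These are all Castelnuovo--Mumford regularity statements with respect to $H$. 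They hold once $m_i\ge n+2$ (Ein--Lazarsfeld) or $m_i\ge 3n+2$ in the secant-level versions.

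\textbf{Main obstacle.} The hard step is the secant-level input: surjectivity of
\[
H^0(L_1)\otimes H^0(\Sigma_1,\mathcal{O}(\ldots))\to\cdots,
\]
or equivalently vanishing of $H^1(X^{[2]}, \wedge^2 M_{E_{L_1}}\otimes \ldots)$. The threshold $3n+2$ should arise here, exactly as in the proof of Theorem \ref{thm:main2}, since the regularity of the relevant sheaf on $X\times X$ costs about $n$ extra twists per factor. I would try to reuse verbatim the vanishing lemmas established for Theorem \ref{thm:main2}, applied separately with $L_1$ and $L_2$ in place of $L$. The symmetric role of the two factors then gives the condition $m_1,m_2\ge 3n+2$.
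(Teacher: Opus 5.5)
Your reduction to degree three is fine. The paper does it more directly: $L=\omega_X\otimes H^{m_1}\otimes(B_1\otimes L_2)$ with $L_2$ globally generated, hence nef, so Theorems~\ref{thm:main1} and~\ref{thm:main2} apply. Your hedge ``when $\omega_X\otimes H^{n+1}$ is globally generated'' is unnecessary, since this always holds. The easy inclusion of the minors in $I(\Sigma_1)_3$ is also fine.

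The gap is that the actual content of the theorem is never argued: the surjectivity of $m^3_{L_1,L_2}\colon\wedge^3H^0(X,L_1)\otimes\wedge^3H^0(X,L_2)\to I(\Sigma_1)_3$. ``Filter by Koszul-type complexes'' followed by a list of plausible inputs is not a reduction. Your polarization description of $I(\Sigma_1)_3$ as a kernel in $S^3H^0(X,L)$ is also never connected to the minors.

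The missing idea is to move both sides to $X^{[3]}$. There $\wedge^3H^0(X,L_i)=H^0(X^{[3]},N_{3,L_i})$, $I(\Sigma_1)_3=H^0(X^{[3]},A_{3,L})$, and $N_{3,L_1}\otimes N_{3,L_2}=A_{3,L}$. So $m^3_{L_1,L_2}$ is a multiplication map of sections, and its surjectivity follows from $H^1(X^{[3]},M_{N_{3,L_1}}\otimes N_{3,L_2})=0$. One then lifts this to $X^{[2,3]}$, using two facts:
\begin{enumerate}
\item the evaluation map of $N_{3,L_1}$ is a direct summand of the push-forward of the evaluation map of $\res_{2,3}^*L_1\otimes\tau_{2,3}^*N_{2,L_1}$;
\item $\rho_{2,3}^*N_{3,L_2}=(\res_{2,3}^*L_2\otimes\tau_{2,3}^*N_{2,L_2})(-F_2)$.
\end{enumerate}
A kernel-bundle diagram together with $0\to S^2M_{2,L_1}\to M_{2,L_1}\otimes H^0(X,L_1)\to M_{N_{2,L_1}}\to0$ then reduces everything to three concrete vanishings: $H^1(X,M_{L_1}\otimes\wedge^2M_{L_2}\otimes L_2)=0$, $H^1(X^{[2]},M_{2,L_1}\otimes M_{2,L_1\otimes L_2}\otimes N_{2,L_2})=0$, and $H^2(X^{[2]},S^2M_{2,L_1}\otimes M_{2,L_1\otimes L_2}\otimes N_{2,L_2})=0$.

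This also shows why your fallback cannot work, namely reusing the vanishing statements behind Theorem~\ref{thm:main2} ``separately with $L_1$ and $L_2$ in place of $L$''. Lemma~\ref{lem:maintechnical2} and Theorem~\ref{thm:maincohvan2} involve a single line bundle. The needed vanishings are mixed: $M_{2,L_1}$, $M_{2,L_1\otimes L_2}$ and $N_{2,L_2}$ (or $A_{2,L_2}$) appear simultaneously, and one of them is an $H^2$ of a symmetric square. That is precisely why the paper states Proposition~\ref{prop:mainvanishing2} for three independent line bundles and proves the extra Lemma~\ref{lem:maintechnical3}. That lemma works on $X^{[1,2]}$, compares $\rho_{1,2}^*S^2M_{2,L_1}$ with $\tau_{1,2}^*S^2M_{L_1}$, and invokes Proposition~\ref{prop:mainvanishing2}(1). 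This ultimately rests on Kawamata--Viehweg vanishing on the Gorenstein canonical varieties $X^{[1,2,3]}$ and $X^{[1,2,3,4]}$, and it is where $m_1,m_2\geq 3n+2$ enters in this step.

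Your heuristic of ``about $n$ extra twists per factor'' and the unspecified target $H^1(X^{[2]},\wedge^2M_{E_{L_1}}\otimes\ldots)$ do not pin down any of these statements. As written, the proposal establishes only the reduction to cubics and the easy inclusion.
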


The proofs of the main theorems are carried out following the cohomological approach developed in \cite{Agostini.Park.25, Choi.Lacini.Park.Sheridan.25, Ein.Niu.Park.20}. All the theorems are ultimately reduced to cohomology vanishing for vector bundles on Hilbert schemes of points. This can be derived from cohomology vanishing for line bundles on nested Hilbert schemes of points, to which Kawamata--Viehweg vanishing theorem is applied. Since we are finally working with $X^{[1,2,3]}$ and $X^{[1,2,3,4]}$, the fact that they have Gorenstein canonical singularities is a key ingredient of this paper. This was proved in \cite[Lemma 4.20]{Choi.Lacini.Park.Sheridan.25} and \cite[Corollary 4.3]{Choi.25}, respectively.

\medskip

This paper is organized as follows. We begin in Section \ref{sec:prelim} with reviewing Hilbert schemes of points and secant varieties. In Section \ref{sec:mainvanishing}, we establish cohomology vanishing statements on nested Hilbert schemes of points, which play the crucial role in proving the main theorems. The proofs of the main theorems are given in Section \ref{sec:proofs}.

\section{Preliminaries}\label{sec:prelim}
\noindent In this section, we collect necessary facts on Hilbert schemes of points and secant varieties. We closely follow the notations in \cite{Choi.Lacini.Park.Sheridan.25} and \cite{Choi.Park.26}, and we refer to those papers and the references therein for further details. Throughout the paper, let $X$ be a smooth projective complex variety of dimension $n$, and $L$ be a line bundle on $X$.

\subsection{Hilbert schemes of points}
The \emph{Hilbert scheme} of $k$ points on $X$ is set-theoretically
$$
X^{[k]}:= \{ \xi \subseteq X \mid \dim \xi=0, \operatorname{length}(\xi)=k\}.
$$
In this paper, we only deal with the case where $k \leq 4$. In this case, $X^{[k]}$ is an irreducible projective variety of dimension $kn$, and furthermore, $X^{[k]}$ is smooth if and only if $n \leq 2$ or $k \leq 3$ (see \cite{Cheah}). There is the \emph{universal family}
$$
\mathcal{Z}_{k}:=\{ (x, \xi) \in X \times X^{[k]} \mid x \in \xi\} \subseteq X \times X^{[k]}
$$
over $X^{[k]}$ with projection maps $\pr_1 \colon \mathcal{Z}_{k} \to X$ and $\pr_2 \colon \mathcal{Z}_{k} \to X^{[k]}$. If $n \leq 2$ or $k \leq 3$, then $\mathcal{Z}_k$ has rational singularities (see \cite[\S 1.2.4]{Choi.Lacini.Park.Sheridan.25}). Let 
$$
E_{k,L}:=\pr_{2,*} \pr_1^* L
$$
be the \emph{tautological bundle} associated to $L$, which is a rank $k$ vector bundle on $X^{[k]}$ since $\pr_2$ is a finite flat morphism of degree $k$. Notice that $H^0(X^{[k]}, E_{k,L}) = H^0(X, L)$ and the fiber of $E_{k,L}$ over $\xi \in X^{[k]}$ can be identified with $H^0(\xi, L|_{\xi})$. In particular, $E_{k,L}$ is globally generated when $L$ is $(k-1)$-very ample, i.e., $H^0(X, L) \to H^0(\xi, L|_{\xi})$ is surjective for every $\xi \in X^{[k]}$. In this case, the evaluation map $H^0(X, L) \otimes \mathcal{O}_{X^{[k]}} \to E_{k,L}$ is surjective, so its kernel $M_{k,L}$ is a vector bundle on $X^{[k]}$. We have a short exact sequence
$$
0 \longrightarrow M_{k,L} \longrightarrow H^0(X, L) \otimes \mathcal{O}_{X^{[k]}} \longrightarrow E_{k,L} \longrightarrow 0.
$$
When $k=1$, we simply write $M_L:=M_{1,L}$. If $\pr_2 \colon X \times X^{[k]} \to X^{[k]}$ is the second projection, then $M_{k,L} = \operatorname{pr}_{2,*}\mathscr{I}_{\mathcal{Z}_k} \otimes (L \boxtimes \mathcal{O}_{X^{[k]}})$. 
We now make a few comments on which line bundles on $X$ are $(k-1)$-very ample. It is well-known that the line bundle $\omega_X \otimes H^m \otimes B$ on $X$ with $H$ very ample and $B$ nef is globally generated as soon as $m \geq n+1$. By \cite[Theorem 1.1]{HTT}, if $m \geq n+k$, then $\omega_X \otimes H^m \otimes B$ is $(k-1)$-very ample. We will frequently use this fact in Sections \ref{sec:mainvanishing} and \ref{sec:proofs}.

\medskip

Now, we consider the Hilbert--Chow morphism $h_k \colon X^{[k]} \to X^{(k)}$. The $k$-th symmetric product $X^{(k)}$ is obtained by the quotient of the ordinary $k$-th product $X^k$ by the symmetric group action $\mathfrak{S}_k$ permuting the components. The line bundle $L^{\boxtimes k}$ on $X^k$ descends to a line bundle $S_{k,L}$ on $X^{(k)}$. We define the line bundles
$$
T_{k,L}:= h_k^* S_{k,L}, ~N_{k,L}:= T_{k,L}(-\delta_k),~\text{and}~ A_{k,L}:= T_{k,L}(-2\delta_k) = N_{k,L}(-\delta_k)
$$
on $X^{[k]}$, where $\delta_k$ is a divisor on $X^{[k]}$ with $\mathcal{O}_{X^{[k]}}(-\delta_k) = \det E_{k, \mathcal{O}_X}$.  If $k=1$, then $\delta_1=0$ and $E_{1,L}=T_{1,L}=N_{1,L}=A_{1,L}=L$.  Note that $N_{k,L}=\det E_{k,L}$. Thus $N_{k,L}$ is globally generated when $L$ is $(k-1)$-very ample. By \cite[Theorem 1.1]{HTT}, $H^{k-1}$ is $(k-1)$-very ample for a very ample line bundle $H$ on $X$, so $N_{k, H^{k-1}}$ is globally generated. On the other hand, if a line bundle $B$ on $X$ is nef (resp. ample), then the line bundle $S_{k,B}$ on $X^{(k)}$ is nef (resp. ample) so that the line bundle $T_{k,B}=h_k^* S_{k,B}$ is nef (resp. nef and big). 
We will frequently use these facts in Sections \ref{sec:mainvanishing} and \ref{sec:proofs}. 
We have $H^0(X^{[k]}, T_{k,L})= S^k H^0(X, L)$ and $H^0(X^{[k]}, N_{k,L})= \wedge^k H^0(X, L)$. We refer to \cite[\S 2.3.2]{Choi.Lacini.Park.Sheridan.25} for basic properties of these line bundles on $X^{[k]}$. When $k=2,3$, we can compute the canonical line bundle $\omega_{X^{[k]}} = T_{k, \omega_X}((n-2)\delta_k)$ (see \cite[\S 1.1.7]{Choi.Lacini.Park.Sheridan.25}).

\subsection{Nested Hilbert schemes of points}
For positive integers $k_1 < \cdots < k_{\ell}$, the \emph{nested Hilbert scheme} parametrizing nested sequence of zero-dimensional subschemes of length $k_1, \ldots, k_{\ell}$ on $X$ is set-theoretically
$$
X^{[k_1, \ldots, k_{\ell}]} := \{ (\xi_1, \ldots, \xi_{\ell}) \in X^{[k_1]} \times \cdots \times X^{[k_{\ell}]} \mid \xi_1 \subseteq \cdots \subseteq \xi_{\ell} \} \subseteq X^{[k_1]} \times \cdots \times X^{[k_{\ell}]}.
$$
In this paper, we only deal with the nested Hilbert schemes $X^{[k-1,k]}$ for $k=2,3,4$, $X^{[1,2,3]}$, and $X^{[1,2,3,4]}$. They are all irreducible projective varieties. Furthermore, $X^{[k-1,k]}$ is smooth for $k=2,3$ (see \cite{Cheah}), and the remaining $X^{[3,4]}$, $X^{[1,2,3]}$, and $X^{[1,2,3,4]}$  have Goresntein canonical singularities (see \cite[Theorem 1.2]{Choi.25}, \cite[Lemma 4.20]{Choi.Lacini.Park.Sheridan.25}, and \cite[Corollary 4.3]{Choi.25}, respectively).

\medskip

Assume that $k=2,3,4$, and consider the nested Hilbert scheme $X^{[k-1,k]}$. There are projection maps
$$
\rho_{k-1,k} \colon X^{[k-1,k]} \longrightarrow X^{[k]}~~\text{ and }~~\tau_{k-1,k} \colon X^{[k-1,k]} \longrightarrow X^{[k-1]}, 
$$
and there is a residual morphism
$$
\res_{k-1,k} \colon X^{[k-1,k]} \longrightarrow X,~~(\eta, \xi) \longmapsto (\mathscr{I}_{\xi}, \mathscr{I}_{\eta}). 
$$
Note that $\rho_{k-1,k}$ is generically finite and factors thorough the universal family $\mathcal{Z}_k$ over $X^{[k]}$. On the other hand, $X^{[k-1,k]}$  can be obtained by the blow-up 
$$
\bl_{k-1,k} \colon X^{[k-1, k]} \longrightarrow X \times X^{[k-1]}
$$
along the universal family $\mathcal{Z}_{k-1}$ with exceptional divisor $F_{k-1}$. Then $\res_{k-1,k} = \pr_1 \circ \bl_{k-1,k}$ and $\tau_{k-1,k} = \pr_2 \circ \bl_{k-1,k}$, where $\pr_1 \colon X \times X^{[k-1]} \to X$ and $\pr_2 \colon X \times X^{[k-1]} \to X^{[k-1]}$ are projection maps. Notice that $\mathcal{Z}_2 = X^{[1,2]}$ is obtained by the blow-up of $X \times X$ along the diagonal $\Delta \subseteq X \times X$. In particular, $\rho_2$ is finite. If $L$ is $(k-1)$-very ample, then there are short exact sequences
\begin{align*}
& 0 \longrightarrow \res_{k-1,k}^* L(-F_{k-1}) \longrightarrow \rho_{k-1,k}^* E_{k,L} \longrightarrow \tau_{k-1,k}^* E_{k-1,L} \longrightarrow 0\\
& 0 \longrightarrow \rho_{k-1,k}^* M_{k,L} \longrightarrow \tau_{k-1,k}^* M_{k-1,L} \longrightarrow \res_{k-1,k}^* L(-F_{k-1}).
\end{align*}
We have $\rho_{k-1,k}^* T_{k,L} =  \tau_{k-1,k}^* T_{k-1,L} \otimes \res_{k-1,k}^* L$ and $\rho_{k-1}^* \delta_k = \tau_{k-1}^* \delta_{k-1} + F_{k-1}$. We can also compute the canonical line bundle 
$$
\omega_{X^{[k-1,k]}} = \rho_{k-1,k}^* \omega_{X^{[k]}} (F_{k-1}) =  (\tau_{k-1,k}^* \omega_{X^{[k-1]}} \otimes \res_{k-1,k}^*\omega_X)((n-1)F_{k-1})
= \bl_{k-1,k}^* (\omega_X \boxtimes  \omega_{X^{[k-1]}})((n-1)F_{k-1}).
$$

\medskip

For the nested Hilbert scheme $X^{[1,2,3]}$, there are projection maps
$$
\rho_{1,2,3} \colon X^{[1,2,3]} \longrightarrow X^{[3]},~\tau_{1,2,3} \colon X^{[1,2,3]} \longrightarrow X^{[2]},~\text{ and }~ \rho_{1,2,3}' \colon X^{[1,2,3]} \longrightarrow X^{[2,3]},
$$
and there is a residual morphism
$$
\res_{1,2,3} \colon X^{[1,2,3]} \longrightarrow X,~~(x, \eta, \xi) \longmapsto (\mathscr{I}_{\xi}, \mathscr{I}_{\eta}). 
$$
Note that $\rho_{1,2,3}$ is generically finite and $\rho_{1,2,3}'$ is finite. We have $\rho_{1,2,3} = \rho_{2,3} \circ \rho_{1,2,3}'$. On the other hand, $X^{[1,2,3]}$ can be obtained by the blow-up
$$
\bl_{1,2,3} \colon X^{[1,2,3]} \longrightarrow X \times X^{[1,2]}
$$
along the universal family 
$$
\mathcal{W}_{1,2}:= \{ (x, (y, \xi)) \in X \times X^{[1,2]} \mid x,y \in \xi\} \subseteq X \times X^{[1,2]}
$$
over $X^{[1,2]}$ with exceptional divisor $F_{1,2}$. Then $\res_{1,2,3} = \pr_1 \circ \bl_{1,2,3}$ and $\tau_{1,2,3} = \pr_2 \circ \bl_{1,2,3}$, where $\pr_1 \colon X \times X^{[1,2]} \to X$ and $\pr_2 \colon X \times X^{[1,2]} \to X^{[1,2]}$ are projection maps. Note that $\mathcal{W}_{1,2}$ has two irreducible components. We have $\rho_{1,2,3}^* T_{k,L} = \bl_{1,2,3}^* (L \boxtimes \rho_{1,2}^* T_{2,L})$ and $\rho_{1,2,3}^* \mathcal{O}_{X^{[3]}}(\delta_3) = \bl_{1,2,3}^* (\mathcal{O}_X \boxtimes \rho_{1,2}^* \mathcal{O}_{X^{[2]}}(\delta_2)) (F_{1,2})$. We can also compute the canonical line bundle
$\omega_{X^{[1,2,3]}} = \bl_{1,2,3}^* (\omega_X \boxtimes \omega_{X^{[1,2]}}) ((n-1)F_{1,2})$.

\medskip

For the nested Hilbert scheme $X^{[1,2,3,4]}$, there is the projection map $\rho_{1,2,3,4} \colon X^{[1,2,3,4]} \to X^{[4]}$, which is generically finite. As before, $X^{[1,2,3,4]}$ can be obtained by the blow-up
$$
\bl_{1,2,3,4} \colon X^{[1,2,3,4]} \longrightarrow X \times X^{[1,2,3]}
$$
along the universal family
$$
\mathcal{W}_{1,2,3}:= \{ (x, (y,\eta, \xi)) \in X \times X^{[1,2,3]} \mid x \in \xi, y \in \eta, \eta \subseteq \xi\} \subseteq X \times X^{[1,2,3]}
$$
over $X^{[1,2,3]}$ with exceptional divisor $F_{1,2,3}$. Note that $\mathcal{W}_{1,2,3}$ has three irreducible components. We have $\rho_{1,2,3,4}^* T_{k,L} = \bl_{1,2,3,4}^* (L \boxtimes \rho_{1,2,3}^* T_{3,L})$ and $\rho_{1,2,3,4}^* \mathcal{O}_{X^{[4]}}(\delta_4) = \bl_{1,2,3,4}^* (\mathcal{O}_X \boxtimes \rho_{1,2,3}^* \mathcal{O}_{X^{[3]}}(\delta_3))(F_{1,2,3})$. We can also compute the canonical line bundle
$\omega_{X^{[1,2,3,4]}} = \bl_{1,2,3,4}^* (\omega_X \boxtimes \omega_{X^{[1,2,3]}}) ((n-1)F_{1,2,3})$.

\subsection{Secant varieties}
From now on, we assume that $k=2$ or $3$ so that $X^{[k]}$ and $X^{[k-1,k]}$ are smooth. Let 
$$
B^k=B^k(L):=\mathbb{P}(E_{k+1,L})~\text{ with canonical projection }~\pi_k \colon B^k \longrightarrow X^{[k]}.
$$
We further assume that $L$ is $(2k-1)$-very ample. Since $E_{k,L}$ is globally generated, it follows that the tautological line bundle $\mathcal{O}_{B^k}(1)$ is base point free. Note that $H^0(B^k, \mathcal{O}_{B^k}(1)) = H^0(X^{[k]}, E_{k,L})=H^0(X, L)$. Thus the complete linear system $\lvert \mathcal{O}_{B^k}(1) \rvert$ gives a morhpism $\alpha_k \colon B^k \to  \mathbb{P} H^0(X, L) = \mathbb{P}^r$
whose image is the \emph{$k$-secant variety}
$$
\sigma_k=\sigma_k(X, L):=\overline{\bigcup_{\xi \in X^{[k]}} \langle \xi \rangle} \subseteq \mathbb{P}^r,
$$
which is nothing but the $(k-1)$-th secant variety $\Sigma_{k-1}=\Sigma_{k-1}(X, L)$. Two different conventions for secant varieties are both widely used in the literature. Following \cite{Choi.Lacini.Park.Sheridan.25, Choi.Park.26}, we use the notation of $k$-secant variety $\sigma_k$ simply because it is more convenient for maintaining consistent indexing. 
Note that 
$$
\alpha_k \colon B^k \longrightarrow \sigma_k
$$
is a birational morphism hence a resolution of singularities and $\sigma_k$ is a projective variety with $\dim \sigma_k = kn+k-1$. It is clear that $\mathcal{O}_{B^k}(1) = \alpha_k^* \mathcal{O}_{\sigma_k}(1)$. In our case, $\operatorname{Sing}(\sigma_k) = \sigma_{k-1}$ by \cite[Corollary E]{Choi.Lacini.Park.Sheridan.25}.

\medskip

Let 
$$
B^{k-1,k}=B^{k-1,k}(L):=\mathbb{P}(\tau_{k-1,k}^* E_{k-1,L})~\text{ with canonical projection }~\pi_{k-1,k} \colon B^{k-1,k} \longrightarrow X^{[k-1,k]}.
$$
We have a commutative diagram
$$
\xymatrix{
B^{k-1,k} \ar[r]^-{\widetilde{\tau}_{k-1,k}} \ar[d]_-{\pi_{k-1,k}} & B^{k-1} \ar[d]^-{\pi_{k-1}} \\
X^{[k-1,k]} \ar[r]_{\tau_{k-1,k}} & X^{[k-1]}.
}
$$
On the other hand, from the surjection $\rho_{k-1,k}^* E_{k,L} \to \tau_{k-1,k}^* E_{k-1,L}$, we get a morphism
$\alpha_{k-1,k} \colon B^{k-1,k} \to B^k$ whose image is denoted by $Z_{k-1}^k$. Note that 
$$
\alpha_{k-1,k} \colon B^{k-1,k} \longrightarrow Z_{k-1}^k
$$
is a birational morphism hence a resolution of singularities and $Z_{k-1}^k=\alpha_k^{-1}(\sigma_{k-1})$ is the exceptional divisor of $\alpha_k$. By \cite[Proposition 3.29]{Choi.Lacini.Park.Sheridan.25}, 
$$
\mathcal{O}_{B^3}(-Z_{k-1}^k) =  \pi_k^* A_{k,L} \otimes \mathcal{O}_{B^k}(-k).
$$
There is a commutative diagram
$$
\xymatrix{
B^{k-1,k} \ar[r]^-{\alpha_{k-1,k}} \ar[d]_-{\pi_{k-1,k}} & Z_{k-1}^k \ar[d]^-{\pi_k} \\
X^{[k-1,k]} \ar[r]_{\rho_{k-1,k}} & X^{[k]}.
}
$$
Note that $\alpha_k \circ \alpha_{k-1,k} = \alpha_{k-1} \circ \widetilde{\tau}_{k-1,k}$ and $\alpha_k(\alpha_{k-1,k}(B^{k-1,k}))=\alpha_k(Z_{k-1}^k)=\sigma_{k-1}$. 

\medskip

When $k=2$, we have $\mathcal{Z}_2 = X^{[1,2]} = B^{1,2} = Z_1^2$, which is the universal family over $X^{[2]}$ via $\pi_2$, and the restricted morphism $\alpha_2 \colon Z_1^2 \to X$ coincides with the residual morhpism $\res_{1,2} \colon X^{[1,2]} \to X$. Notice that $Z_1^2$ is smooth. 
For $k=3$, consider the effective divisor $\widetilde{\tau}_{2,3}^* Z_1^2$ on $B^{2,3}$. Note that $\widetilde{\tau}_{2,3}^* Z_1^2 = X^{[1,2,3]}$ with 
$\pi_{2,3}|_{\widetilde{\tau}_{2,3}^* Z_1^2} = \rho_{1,2,3}'$ and $(\pi_2 \circ \widetilde{\tau}_{2,3})|_{\widetilde{\tau}_{2,3}^* Z_1^2} = \tau_{1,2,3}$. Let $Z_1^3:=\alpha_{2,3}(\widetilde{\tau}_{2,3}^*Z_1^2)$. Then $\alpha_3(Z_1^3)=X$, and $Z_1^3 = \alpha_3^{-1}(X)$. Note that $Z_1^3 = \mathcal{Z}_3$ is the universal family over $X^{[3]}$ via $\pi_3$ and the restricted morphism $\alpha_3 \colon Z_1^3 \to X$ coincides with the the projection map $\pr_1 \colon \mathcal{Z}_3 \to X$.
 Since $Z_2^3$ is a prime divisor on a smooth variety $B^3$, it is Gorenstein and $\omega_{B^{2,3}}(\widetilde{\tau}_{2,3}^* Z_1^2) = \alpha_{2,3}^* \omega_{Z_2^3}$.
We have a short exact sequence
$$
0 \longrightarrow \alpha_{2,3,*} \mathcal{O}_{B^{2,3}}(-\widetilde{\tau}_{2,3}^* Z_1^2) \longrightarrow \mathcal{O}_{Z_2^3} \longrightarrow \mathcal{O}_{Z_1^3} \longrightarrow 0,
$$
and $R^i \alpha_{2,3,*} \mathcal{O}_{B^{2,3}} = R^i \alpha_{2,3,*} \mathcal{O}_{B^{2,3}}(-\widetilde{\tau}_{2,3}^* Z_1^2) = 0$ for $i>0$ (see \cite[Subsection 2.2]{Choi.Park.26}). Here we show that $Z_2^3$ has semi-log canonical singularities. Recall that $\alpha_{2,3} \colon B^{2,3} \to Z_2^3$ is a resolution of singularities. As $K_{B^{2,3}} +  \widetilde{\tau}_{2,3}^* Z_1^2 = \alpha_{2,3}^* K_{Z_2^3}$, it suffices to prove that $(B^{2,3},  \widetilde{\tau}_{2,3}^* Z_1^2)$ is a log canonical pair. But this holds by inversion of adjunction since $\widetilde{\tau}_{2,3}^* Z_1^2 = X^{[1,2,3]}$ has canonical singularities. In particular, \cite[Corollary 6.32]{Kollar.13} shows that $Z_2^3$ has Du Bois singularities.  
We refer to \cite[Sections 2 and 3]{Choi.Lacini.Park.Sheridan.25} for more details on the geometry of $B^k$ and $B^{k-1,k}$.

\section{Cohomology Vanishing results on nested Hilbert schemes of points}\label{sec:mainvanishing}

\noindent In this section, we establish various cohomology vanishing statements, which are the main ingredients of the proofs of the main theorems. Recall that $X$ is a smooth projective complex variety of dimension $n$.

\begin{prop}\label{prop:cohvanA}
Let $k \geq 1$ and $1 \leq i \leq k$ be integers, and $L_i:=\omega_X \otimes H^{m_i} \otimes B_i$ be a line bundle on $X$ with $H$ very ample and $B_i$ nef for an integer $m_i \geq 0$. If $a \geq 1$, $m_1 \geq (k-1)(a+n-1)+1$, and $m_i \geq a+n$ for $2 \leq i \leq k$, then  
$$
H^i(X^k, \mathscr{I}_{\Delta_{1,2} \cup \cdots \cup \Delta_{1,k}}^a \otimes (L_1 \boxtimes \cdots \boxtimes L_k))=0~~\text{ for $i > 0$},
$$
where $\Delta_{1,i}:=\{ (x_1, \ldots, x_k) \in X^k \mid x_1 = x_i\}$ is a pairwise diagonal for $2 \leq i \leq k$. In particular, if $m_1 \geq (k-1)n+1$ and $m_i \geq n+1$ for $2 \leq i \leq k$, then
$$
H^i(X, M_{L_2} \otimes \cdots \otimes M_{L_k} \otimes L_1) = 0~~\text{ for $i>0$}. 
$$
\end{prop}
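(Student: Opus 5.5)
We argue by induction on $k$. The plan is to peel off one diagonal at a time by pushing forward along a coordinate projection, and then to absorb the resulting pushforwards into the first factor using a Castelnuovo--Mumford regularity estimate with respect to $H$. For $k=1$ the ideal is trivial, and $H^i(X,L_1)=0$ for $i>0$ by Kodaira vanishing since $m_1\ge 1$.

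\textbf{Product decomposition.} First we show that
$$\mathscr{I}^a_{\Delta_{1,2}\cup\cdots\cup\Delta_{1,k}}=\prod_{i=2}^k \mathscr{I}_{\Delta_{1,i}}^a.$$
In local coordinates near a point of $X^k$, the ideal of $\Delta_{1,i}$ is generated by the entries of $x_i-x_1$. After the change of coordinates $y_i=x_i-x_1$, these generators involve pairwise disjoint sets of variables. For ideals in disjoint sets of variables, intersection agrees with product and commutes with taking powers. Hence the ideal of the union is the product of the individual ideals, and so is its $a$-th power.

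\textbf{Removing the last factor.} Next let $q\colon X^k\to X^{k-1}$ forget $x_k$. By the product decomposition, the sheaf in question is
$$q^*\bigl(\mathscr{I}^a_{\Delta_{1,2}\cup\cdots\cup\Delta_{1,k-1}}\otimes (L_1\boxtimes\cdots\boxtimes L_{k-1})\bigr)\otimes \mathscr{I}^a_{\Delta_{1,k}}\otimes \mathrm{pr}_k^*L_k.$$
Its restriction to the fiber of $q$ over $(x_1,\dots,x_{k-1})$ is $\mathscr{I}_{x_1}^a\otimes L_k$. We check that $H^j(X,\mathscr{I}_{x}^a\otimes L_k)=0$ for $j>0$ as follows. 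Let $\beta\colon \widetilde X\to X$ be the blow-up at $x$, with exceptional divisor $E$, so that $K_{\widetilde X}=\beta^*K_X+(n-1)E$. Then
$$\beta^*L_k(-aE)=K_{\widetilde X}+\bigl(\beta^*H^{m_k}(-(a+n-1)E)\bigr)+\beta^*B_k.$$
Since $\beta^*H(-E)$ is nef, the bracketed divisor is nef and big once $m_k\ge a+n$, so Kawamata--Viehweg applies. This vanishing is uniform in $x$, so by flatness and cohomology and base change we get $R^jq_*=0$ for $j>0$ and
$$q_*(\cdots)=\mathscr{I}^a_{\Delta_{1,2}\cup\cdots\cup\Delta_{1,k-1}}\otimes\bigl((L_1\otimes F_k)\boxtimes L_2\boxtimes\cdots\boxtimes L_{k-1}\bigr),$$
where $F_k:=\mathrm{pr}_{1,*}(\mathscr{I}_\Delta^a\otimes \mathrm{pr}_2^*L_k)$ is a vector bundle on $X$. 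When $a=1$, this bundle is $F_k=M_{L_k}$.

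\textbf{Inducting with a vector bundle in the first slot.} Iterating the previous step, we are reduced to showing
$$H^j(X,\,L_1\otimes F_2\otimes\cdots\otimes F_k)=0\quad\text{for } j>0.$$
Equivalently, we prove a version of the proposition in which the first slot is $\omega_X\otimes \mathcal{E}\otimes B$ for a vector bundle $\mathcal{E}$ that is suitably regular with respect to $H$. The key claim is that $F_i\otimes H^{a+n-1}$ is $0$-regular with respect to $H$ in a sense strong enough to give Kawamata--Viehweg type vanishing. This would follow from
$$H^j(X,F_i\otimes \omega_X\otimes H^{a+n-1-j}\otimes B')=0$$
for all $j>0$ and all nef $B'$, which in turn should follow from
$$0\to F_i\to H^0(L_i)\otimes\mathcal{O}_X\to \mathrm{pr}_{1,*}\bigl(\mathcal{O}_{a\Delta}\otimes \mathrm{pr}_2^*L_i\bigr)\to 0.$$
The right-hand term carries the filtration by $S^j\Omega_X\otimes L_i$ for $j<a$, and the vanishings for these pieces come from Kodaira--Nakano-type arguments on the blow-up, as in the previous step.

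Granting the claim, regularity is additive under tensor product. Each $F_i$ consumes $a+n-1$ copies of $H$, and the final step needs $\omega_X\otimes H^{\ge1}\otimes(\text{nef})$ to have no higher cohomology. This accounts for the bound $m_1\ge (k-1)(a+n-1)+1$. The ``in particular'' statement is the case $a=1$, where $F_i=M_{L_i}$ and the thresholds become $m_1\ge (k-1)n+1$ and $m_i\ge n+1$.

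\textbf{Main obstacle.} The main obstacle is this regularity estimate for $F_i$, and in particular making it hold uniformly over twists by nef bundles. If the direct argument is too lossy, I would instead realize $F_i$ on the blow-up of $X\times X$ along $\Delta$. There one can apply Kawamata--Viehweg directly to
$$\mathrm{bl}^*(\omega_X\otimes H^{m_1}\otimes B_1\boxtimes L_i)\bigl(-aE_\Delta\bigr),$$
and carry the remaining factors along through the induction.
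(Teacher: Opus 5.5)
Your proposal has a genuine gap: the step that carries all the content is granted rather than proved, and the sufficient condition you offer for it is false. Your first two steps are correct, but they only reformulate the problem. Pushing forward along the coordinate projections gives $H^i(X^k,\mathscr{I}^a_{\Delta_{1,2}\cup\cdots\cup\Delta_{1,k}}\otimes(L_1\boxtimes\cdots\boxtimes L_k))=H^i(X,L_1\otimes F_2\otimes\cdots\otimes F_k)$. This is exactly the identification the paper uses in the opposite direction, to deduce the ``in particular'' clause from the first assertion. Everything then rests on the vanishing on $X$.

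Your proposed criterion $H^j(X,F_i\otimes\omega_X\otimes H^{a+n-1-j}\otimes B')=0$ for all $j>0$ and nef $B'$ already fails for $a=1$, $j=n$, $B'=\mathcal{O}_X$. Tensor $0\to M_{L_i}\to H^0(L_i)\otimes\mathcal{O}_X\to L_i\to 0$ with $\omega_X$. Since $H^n(X,L_i\otimes\omega_X)=H^0(X,L_i^{-1})^\vee=0$ for $L_i\not\cong\mathcal{O}_X$ globally generated, you get a surjection $H^n(X,M_{L_i}\otimes\omega_X)\to H^0(X,L_i)\otimes H^n(X,\omega_X)\neq 0$. On $\mathbb{P}^n$ with $H=\mathcal{O}(1)$ and $L_i=\mathcal{O}(d)$, $d\ge 2$, it fails already at $j=1$, since $H^1(M_{L_i}(-2))\supseteq H^0(\mathcal{O}(d-2))\neq 0$.

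Beyond that, the ``additivity of regularity under tensor products'' is only asserted, and for a nonstandard $\omega_X$-twisted, nef-uniform notion. The usual proof of additivity goes through linear resolutions by sums of $H^{-p}$, which uses the regularity of $\mathcal{O}_{\mathbb{P}^r}$ and does not transfer to an arbitrary $X$. So even with corrected twists this step would need a real argument, and it is not clear it would recover the bound $(k-1)(a+n-1)+1$.

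The missing idea is not to push forward at all. Let $b\colon Y\to X^k$ be the blow-up along $\Delta_{1,2}\cup\cdots\cup\Delta_{1,k}$. Then $Y$ is smooth, as an iterated blow-up of strict transforms, with exceptional divisors $E_{1,i}$ and $E=\sum_i E_{1,i}$. The cohomology of $\mathscr{I}^a\otimes(L_1\boxtimes\cdots\boxtimes L_k)$ agrees with that of $b^*(L_1\boxtimes\cdots\boxtimes L_k)(-aE)$, and $\omega_Y=b^*\omega_{X^k}((n-1)E)$. The bundles $D_{1,i}:=b^*\mathrm{pr}_{1,i}^*(H\boxtimes H)(-E_{1,i})$ are nef. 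This gives
$$b^*(L_1\boxtimes\cdots\boxtimes L_k)(-aE)=\omega_Y\otimes b^*\bigl(H^{m_1-(k-1)(a+n-1)}\boxtimes H^{m_2-(a+n-1)}\boxtimes\cdots\boxtimes H^{m_k-(a+n-1)}\bigr)\otimes b^*(B_1\boxtimes\cdots\boxtimes B_k)\otimes(D_{1,2}\otimes\cdots\otimes D_{1,k})^{a+n-1},$$
and Kawamata--Viehweg finishes the proof. The factor $k-1$ you hoped to obtain from additivity of regularity appears here because each $D_{1,i}$ uses one copy of $H$ on the first factor. The additivity happens for line bundles on $Y$, where it is trivial. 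Your fallback points in this direction, but as sketched it blows up only $X\times X$ and carries the remaining factors along. For $k\ge 3$ that leaves the vector bundle $F_3\otimes\cdots\otimes F_k$ in the first slot, where Kawamata--Viehweg cannot be applied. You need all $k-1$ diagonals blown up simultaneously on $X^k$.
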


\begin{proof}
This is well-known, but we include the proof for the reader's convenience. 
Let $b \colon Y \to X^k$ be the blow-up of $X^k$ along $\Delta_{1,2} \cup \cdots \cup \Delta_{1,k}$ with exceptional divisors $E_{1,2}, \ldots, E_{1,k}$ such that $b(E_{1,i})=\Delta_{1,i}$ for $2 \leq i \leq k$. 
It is known (see e.g., \cite[Corollary 3.5]{BL2025}) that
$$
\mathscr{I}_{\Delta_{1,2} \cup \cdots \cup \Delta_{1,k}} 
= \mathscr{I}_{\Delta_{1,2}} \cdots \mathscr{I}_{\Delta_{1,k}} 
= \mathscr{I}_{\Delta_{1,2}} \otimes \cdots \otimes \mathscr{I}_{\Delta_{1,k}}.
$$
As $L_2, \ldots, L_k$ are globally generated and $H^i(X, L_2) = \cdots = H^i(X, L_k)=0$ for $i>0$, we find
$$
R^i \pr_{1,*}  \mathscr{I}_{\Delta_{1,2} \cup \cdots \cup \Delta_{1,k}} \otimes (L_1 \boxtimes \cdots \boxtimes L_k) = \begin{cases} M_{L_2} \otimes \cdots \otimes M_{L_k} \otimes L_1 & \text{for $i=0$} \\ 0 & \text{for $i>0$}, \end{cases}
$$
where $\pr_1 \colon X^k \to X$ is the projection given by $(x_1, \ldots, x_k) \mapsto x_1$ (see e.g., \cite[Theorem 3.6]{BL2025}). 
Thus the first assertion implies the second assertion. 
One can also easily check that $Y$ can be obtained by the successive blow-ups of along the strict transforms of $\Delta_{1,2}, \ldots, \Delta_{1,k}$ in any order. In particular, $Y$ is smooth, and for $2 \leq i \leq k$, the line bundle $D_{1,i}:=b^* \pr_{1,i}^* (H \boxtimes H) (-E_{1,i})$ on $Y$ is nef, where $\pr_{1,i} \colon X^k \to X^2$ is the projection map given by $(x_1, \ldots, x_k) \mapsto (x_1, x_i)$. Set $E:=E_{1,2} + \cdots + E_{1,k}$ and $D:=D_{1,2} \otimes \cdots \otimes D_{1,k}$. 
Note that
$$
H^i(X^k, \mathscr{I}_{\Delta_{1,2} \cup \cdots \cup \Delta_{1,k}}^a \otimes (L_1 \boxtimes \cdots \boxtimes L_k)) = H^i(Y, b^*(L_1 \boxtimes \cdots \boxtimes L_k)(-aE))~~\text{ for $i>0$}.
$$
We may write
$$
b^*(L_1 \boxtimes \cdots \boxtimes L_k)(-aE)= \omega_Y \otimes \underbrace{b^*(H^{m_1 - (k-1)(a+n-1)} \boxtimes H^{m_2 - (a+n-1)} \boxtimes \cdots \boxtimes H^{m_k-(a+n-1)})}_{\text{nef and big}} \otimes \underbrace{D^{a+n-1}}_{\text{nef}}.
$$
Thus the first assertion follows from Kawamata--Viehweg vanishing. 
\end{proof}

\begin{rem}
In Proposition \ref{prop:cohvanA}, when $a=1$ and $L_2 = \cdots = L_k$, 
a stronger result is proved in \cite[Theorem 2.1]{Ein.Lazarsfeld.93}  
\end{rem}

\begin{prop}\label{prop:cohvanB}
Let $L:=\omega_X \otimes H^m \otimes B$ be a line bundle on $X$ with $H$ very ample and $B$ nef for an integer $m \geq 0$. Then we have the following:
\begin{enumerate}
    \item Let $N$ be a nef line bundle on $X^{[1,2]}$. If $m \geq n+1$, then 
    $$
    H^i(X^{[1,2]}, \rho_{1,2}^* N_{2,L} \otimes N)=0~~\text{ for $i>0$}.
    $$
    In particular, if $m \geq n+2$, then 
    $$
     H^i(X^{[1,2]}, \rho_{1,2}^* A_{2,L} \otimes N)=0~~\text{ for $i>0$}.
    $$
    \item Let $N'$ be a nef line bundle on $X^{[1,2,3]}$, and $N$ be a nef line bundle on $X^{[2,3]}$. If $m  \geq 2n+1$, then 
    $$
    H^i(X^{[1,2,3]}, \rho_{1,2,3}^* N_{3,L} \otimes N')=0~\text{ and }~H^i(X^{[2,3]}, \rho_{2,3}^* N_{3,L} \otimes N)~~\text{ for $i>0$}.
    $$
    In particular, if $m \geq 2n+3$, then 
    $$
    H^i(X^{[1,2,3]}, \rho_{1,2,3}^* A_{3,L} \otimes N')=0~\text{ and }~H^i(X^{[2,3]}, \rho_{2,3}^* A_{3,L} \otimes N)~~\text{ for $i>0$}.
    $$
\end{enumerate}
\end{prop}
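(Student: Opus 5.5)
Throughout, the plan is to write each line bundle as $\omega \otimes (\text{nef and big}) \otimes (\text{nef})$ on a variety with rational (Gorenstein canonical) singularities, and then apply Kawamata--Viehweg vanishing, exactly as in the proof of Proposition \ref{prop:cohvanA}.

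For (1), $X^{[1,2]} = \bl(X\times X)$ along $\Delta$, with exceptional divisor $F_1$ and $\omega_{X^{[1,2]}} = \bl^*(\omega_X\boxtimes\omega_X)((n-1)F_1)$. Since $\rho_{1,2}^*\delta_2 = F_1$, we have $\rho_{1,2}^*N_{2,L} = \bl^*(L\boxtimes L)(-F_1)$. Substituting $L = \omega_X\otimes H^m\otimes B$ gives
$$\rho_{1,2}^*N_{2,L}\otimes N = \omega_{X^{[1,2]}}\otimes \bl^*(H^{m-n}\boxtimes H^{m-n})\otimes \bigl(\bl^*(H\boxtimes H)(-F_1)\bigr)^{n}\otimes \bl^*(B\boxtimes B)\otimes N.$$
Here $\bl^*(H\boxtimes H)(-F_1)$ is nef because $H$ is very ample (it is the pullback of $N_{2,H}$). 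The factor $\bl^*(H^{m-n}\boxtimes H^{m-n})$ is nef and big once $m\ge n+1$, and the remaining factors are nef, so Kawamata--Viehweg gives the vanishing. For the $A_{2,L}$ statement, note $\rho_{1,2}^*A_{2,L} = \rho_{1,2}^*N_{2,L}(-F_1)$. Absorbing one more copy of $\bl^*(H\boxtimes H)(-F_1)$ costs one more power of $H$, so $m\ge n+2$ suffices. Equivalently, $A_{2,L} = N_{2,L'}\otimes N_{2,H}$ with $L' = \omega_X\otimes H^{m-1}\otimes B$, and $\rho_{1,2}^*N_{2,H}$ can be merged into $N$.

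For (2), I work on $X^{[1,2,3]}$, which has Gorenstein canonical (hence rational) singularities, so Kawamata--Viehweg applies. Using the blow-up $\bl_{1,2,3}\colon X^{[1,2,3]}\to X\times X^{[1,2]}$, we have $\rho_{1,2,3}^*N_{3,L} = \bl_{1,2,3}^*\bigl(L\boxtimes \rho_{1,2}^*N_{2,L}\bigr)(-F_{1,2})$, and
$$\omega_{X^{[1,2,3]}} = \bl_{1,2,3}^*\bigl(\omega_X\boxtimes \bl^*(\omega_X\boxtimes\omega_X)((n-1)F_1)\bigr)((n-1)F_{1,2}).$$
Taking the difference, $\rho_{1,2,3}^*N_{3,L}\otimes\omega^{-1}$ equals
$$\rho_{1,2,3}^*N_{3,H^m\otimes B}\ \text{twisted by}\ -(n-1)\bigl(\tau^*F_1 + F_{1,2}\bigr) = -(n-1)\rho_{1,2,3}^*\delta_3,$$
that is, $\rho_{1,2,3}^*T_{3,H^m\otimes B}(-n\,\delta_3)$. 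I then write this as
$$\rho_{1,2,3}^*\bigl(N_{3,H^2}^{\otimes n}\otimes T_{3,H^{m-2n}}\otimes T_{3,B}\bigr),$$
using that $N_{3,H^2}$ is globally generated because $H^2$ is $2$-very ample. The factor $T_{3,H^{m-2n}}$ is nef and big when $m\ge 2n+1$, since it is the pullback of an ample bundle under a generically finite map. Kawamata--Viehweg then gives the first vanishing.

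For $X^{[2,3]}$, which is smooth, I run the same computation using $\omega_{X^{[2,3]}} = \rho_{2,3}^*\omega_{X^{[3]}}(F_2)$ and $\omega_{X^{[3]}} = T_{3,\omega_X}((n-2)\delta_3)$. This gives
$$\rho_{2,3}^*N_{3,L}\otimes\omega^{-1} = \rho_{2,3}^*T_{3,H^m\otimes B}\bigl(-(n-1)\delta_3\bigr)(-F_2).$$
The twist $-F_2$ is the obstacle: it must be handled by a nef class. For this I would use $\rho_{2,3}^*\mathcal{O}(-\delta_3) = \tau^*\mathcal{O}(-\delta_2)(-F_2)$, or alternatively $\bl^*(H\boxtimes N_{2,H})(-F_2)$, which is nef since it is the pullback of $\det$ of the globally generated $E_{3,H^2}$-type bundles. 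The intended outcome is that the line bundle decomposes into $n$ nef pieces of the form $\rho^*N_{3,H^2}$ (or $\rho^*N_{3,H}\otimes(\cdots)$) plus $T_{3,H^{m-2n}}$, which is nef and big.

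For the $A_{3,L}$ versions, $A_{3,L} = N_{3,L}(-\delta_3)$. Absorbing the extra $-\delta_3$ requires one more $N_{3,H^2}$, costing $H^2$, so $m\ge 2n+3$ suffices. Equivalently, apply the $N$-version to $L' = \omega_X\otimes H^{m-2}\otimes B$ and include $N_{3,H^2}$ in $N$ or $N'$.

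The main obstacle is the bookkeeping of exceptional divisors on $X^{[2,3]}$ and $X^{[1,2,3]}$. In particular, I need the identity that the relevant combination of $F$'s is exactly a multiple of $\rho^*\delta_3$, so that the twist can be absorbed by powers of the nef bundle $N_{3,H^2}$.
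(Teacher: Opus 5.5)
Your argument for part (1) and for the $X^{[1,2,3]}$ half of part (2) is correct and is exactly the paper's decomposition; this includes the passage to $A_{2,L}$ and $A_{3,L}$. The gap is in the $X^{[2,3]}$ half of (2).

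Since $F_2=\rho_{2,3}^*\delta_3-\tau_{2,3}^*\delta_2$, your own formula gives
\[
\rho_{2,3}^*N_{3,L}\otimes N=\omega_{X^{[2,3]}}\otimes\rho_{2,3}^*\bigl(N_{3,H^2}^{\otimes n}\otimes T_{3,H^{m-2n}}\otimes T_{3,B}\bigr)\otimes\tau_{2,3}^*\mathcal{O}_{X^{[2]}}(\delta_2)\otimes N.
\]
So on $X^{[2,3]}$ the twist is \emph{not} a multiple of $\rho_{2,3}^*\delta_3$. There is a leftover factor $\tau_{2,3}^*\mathcal{O}_{X^{[2]}}(\delta_2)$, and it is not nef. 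For $n\geq 2$, take a line in a fiber $\mathbb{P}^{n-1}$ of the boundary divisor of $X^{[2]}$ over a point $p$, and lift it to $X^{[2,3]}$ by adjoining a fixed point $x\neq p$; $\tau_{2,3}^*\delta_2$ has degree $-1$ on this curve.

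Neither of your suggested fixes absorbs this factor:
\begin{enumerate}
\item The bundle $\bl_{2,3}^*(H\boxtimes N_{2,H})(-F_2)$ is just $\rho_{2,3}^*N_{3,H}$. It is globally generated when $H$ is $2$-very ample, but need not be nef for merely very ample $H$. If $X$ contains a line $\ell$ of the $H$-embedding, $N_{3,H}$ restricts to $\mathcal{O}(-1)$ on $\ell^{[3]}\cong\mathbb{P}^3$.
\item The identity $\rho_{2,3}^*\mathcal{O}(-\delta_3)=\tau_{2,3}^*\mathcal{O}(-\delta_2)(-F_2)$ only rewrites $-F_2$ and reproduces the same stray $\tau_{2,3}^*\mathcal{O}(\delta_2)$.
\end{enumerate}
One could try to show directly that $\bl_{2,3}^*(H^2\boxtimes T_{2,H^2})(-F_2)=\rho_{2,3}^*N_{3,H^2}\otimes\tau_{2,3}^*\mathcal{O}(\delta_2)$ is nef. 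That would close the gap, but it is an extra positivity statement you have not proved.

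The missing idea, which is how the paper proceeds, is to avoid Kawamata--Viehweg on $X^{[2,3]}$ altogether. The morphism $\rho_{1,2,3}'\colon X^{[1,2,3]}\to X^{[2,3]}$ is finite, with $\rho_{1,2,3}=\rho_{2,3}\circ\rho_{1,2,3}'$. Moreover, $\mathcal{O}_{X^{[2,3]}}$ is a direct summand of $\rho_{1,2,3,*}'\mathcal{O}_{X^{[1,2,3]}}$; indeed this pushforward is $\mathcal{O}_{X^{[2,3]}}\oplus\tau_{2,3}^*\mathcal{O}(-\delta_2)$. By the projection formula, $H^i(X^{[2,3]},\rho_{2,3}^*N_{3,L}\otimes N)$ is a direct summand of $H^i(X^{[1,2,3]},\rho_{1,2,3}^*N_{3,L}\otimes\rho^{\prime *}_{1,2,3}N)$. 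The latter vanishes by your $X^{[1,2,3]}$ computation applied to the nef bundle $N'=\rho^{\prime *}_{1,2,3}N$, and the $A_{3,L}$ case is identical.

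The stray factor you found is exactly the ramification of this double cover:
\[
\omega_{X^{[1,2,3]}}=\rho^{\prime *}_{1,2,3}\bigl(\omega_{X^{[2,3]}}\otimes\tau_{2,3}^*\mathcal{O}(\delta_2)\bigr).
\]
This is why the bookkeeping closes up on $X^{[1,2,3]}$ but not on $X^{[2,3]}$.
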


\begin{proof}
$(1)$ Since we may write
$$
\rho_{1,2}^* N_{2,L} \otimes N = \omega_{X^{[1,2]}} \otimes \underbrace{\rho_{1,2}^* T_{2,H^{m-n}}}_{\text{nef and big}}  \otimes  \underbrace{\rho_{1,2}^* (T_{2, B} \otimes N_{2,H}^n)  \otimes N}_{\text{nef}},
$$
the first assertion follows from Kawamata--Viehweg vanishing. As $\rho_{1,2}^* A_{2,L} = \rho_{1,2}^* N_{2, \omega_X \otimes H^{m-1} \otimes B} \otimes \rho_{1,2}^* N_{2, H}$ and $\rho_{1,2}^* N_{2, H}$ is nef, the first assertion implies the second assertion. 

\medskip

\noindent $(2)$ Recall that there is a finite morphism $\rho_{1,2,3}' \colon X^{[1,2,3]} \to X^{[2,3]}$ and $X^{[1,2,3]}$ has Gorenstein canonical singularities. Note that $\rho_{1,2,3} = \rho_{2,3} \circ \rho_{1,2,3}'$ and $\mathcal{O}_{X^{[2,3]}}$ is a direct summand of $\rho_{1,2,3,*}' \mathcal{O}_{X^{[1,2,3]}}$. Thus we only need to prove the cohomology vanishing on $X^{[1,2,3]}$. Since we may write
$$
\rho_{1,2,3}^* N_{3, L} \otimes N' = \omega_{X^{[1,2,3]}} \otimes \underbrace{\rho_{1,2,3}^* T_{3, H^{m-2n}}}_{\text{nef and big}} \otimes \underbrace{\rho_{1,2,3}^* (T_{3, B} \otimes N_{3, H^{2}}^n) \otimes N'}_{\text{nef}},
$$
the first assertion follows from Kawamata--Viehweg vanishing. As $\rho_{1,2,3}^* A_{3, L} = \rho_{1,2,3}^* N_{3, \omega X \otimes H^{m-2} \otimes B} \otimes \rho_{1,2,3}^* N_{3, H^2}$ and $\rho_{1,2,3}^* N_{3, H^2}$ is nef, the first assertion implies the second assertion. 
\end{proof}

\begin{prop}\label{prop:mainvanishing1}
Let $L_1:=\omega_X \otimes H^{m_1} \otimes B_1$ and $L_2:=\omega_X \otimes H^{m_2} \otimes B_2$ be line bundles on $X$ with $H$ very ample and $B_1, B_2$ nef for integers $m_1, m_2 \geq 0$. Then we have the following:
\begin{enumerate}
    \item Let $N$ be a nef line bundle $N$ on $X^{[1,2]}$. If $m_1, m_2 \geq 2n+1$, then 
    $$
    H^i(X^{[1,2]}, \rho_{1,2}^* (M_{2,L_1} \otimes N_{2,L_2}) \otimes N)=0~~\text{ for $i>0$}.
    $$
    In particular, if $m_1 \geq 2n+1$ and $m_2 \geq 2n+2$, then 
     $$
    H^i(X^{[1,2]}, \rho_{1,2}^* (M_{2,L_1} \otimes A_{2,L_2}) \otimes N)=0~~\text{ for $i>0$}.
    $$
    \item Let $N'$ be a nef line bundle on $X^{[1,2,3]}$, and $N$ be a nef line bundle on $X^{[2,3]}$. If $m_1, m_2 \geq 3n+1$, then 
    $$
    H^i(X^{[1,2,3]}, \rho_{1,2,3}^* (M_{3, L_1} \otimes N_{3,L_2}) \otimes N') = 0~\text{and}~H^i(X^{[2,3]}, \rho_{2,3}^* (M_{3, L_1} \otimes N_{3,L_2}) \otimes N)=0~\text{for $i>0$}. 
    $$
    In particular, if $m_1 \geq 3n+1$ and $m_2 \geq 3n+3$, then 
    $$
    H^i(X^{[1,2,3]}, \rho_{1,2,3}^* (M_{3, L_1} \otimes A_{3,L_2}) \otimes N') = 0~\text{and}~H^i(X^{[2,3]}, \rho_{2,3}^* (M_{3, L_1} \otimes A_{3,L_2}) \otimes N)=0~\text{for $i>0$}. 
    $$
\end{enumerate}
\end{prop}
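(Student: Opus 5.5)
The plan is to realize $\rho_{1,2}^*M_{2,L_1}$ (resp. $\rho_{1,2,3}^*M_{3,L_1}$) as a pushforward from $X \times X^{[1,2]}$ (resp. $X \times X^{[1,2,3]}$). This turns the vector bundle vanishing into a line bundle vanishing on the blow-up $X^{[1,2,3]}$ (resp. $X^{[1,2,3,4]}$), where Kawamata--Viehweg applies as in Propositions \ref{prop:cohvanA} and \ref{prop:cohvanB}.

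For (1), set $P:=\rho_{1,2}^*N_{2,L_2}\otimes N$. Pulling back $M_{2,L_1}=\pr_{2,*}(\mathscr{I}_{\mathcal{Z}_2}\otimes (L_1\boxtimes\mathcal{O}))$ along $\rho_{1,2}$, I expect
\[
\rho_{1,2}^*M_{2,L_1}=\pr_{2,*}\bigl(\mathscr{I}_{\mathcal{W}_{1,2}}\otimes(L_1\boxtimes\mathcal{O}_{X^{[1,2]}})\bigr),
\]
with vanishing higher direct images. The reason is that $\mathcal{W}_{1,2}$ is the pullback of $\mathcal{Z}_2$, and $H^1(X,L_1\otimes\mathscr{I}_\xi)=0$ for every $\xi\in X^{[2]}$ because $L_1$ is $1$-very ample and $H^1(X,L_1)=0$. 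Next, since $\bl_{1,2,3}$ is the blow-up along $\mathcal{W}_{1,2}$ and $X^{[1,2,3]}$ has rational (canonical) singularities, I would show that $\bl_{1,2,3,*}\mathcal{O}(-F_{1,2})=\mathscr{I}_{\mathcal{W}_{1,2}}$ and $R^i\bl_{1,2,3,*}\mathcal{O}(-F_{1,2})=0$ for $i>0$. For the latter I would try relative Kawamata--Viehweg for $-F_{1,2}$, which is $\bl$-ample, using $\omega_{X^{[1,2,3]}}=\bl^*(\cdots)((n-1)F_{1,2})$. Combining these gives
\[
H^i(X^{[1,2]},\rho_{1,2}^*M_{2,L_1}\otimes P)=H^i\bigl(X^{[1,2,3]},\bl_{1,2,3}^*(L_1\boxtimes P)(-F_{1,2})\bigr).
\]
I expect this identification of direct images to be the main obstacle: $\mathcal{W}_{1,2}$ is reducible, so one must check that it has the required reducedness and rationality properties.

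Then comes the numerical decomposition. Using $\omega_{X^{[1,2,3]}}=\bl^*(\omega_X\boxtimes\omega_{X^{[1,2]}})((n-1)F_{1,2})$, $\omega_{X^{[1,2]}}=\bl_{1,2}^*(\omega_X\boxtimes\omega_X)((n-1)F_1)$ and $\rho_{1,2}^*\delta_2=F_1$, I would write
\[
\rho_{1,2}^*N_{2,L_2}\otimes\omega_{X^{[1,2]}}^{-1}=\rho_{1,2}^*T_{2,H^{m_2}\otimes B_2}(-nF_1).
\]
The formulas $\rho_{1,2,3}^*T_{3,H^2}=\bl^*(H^2\boxtimes\rho_{1,2}^*T_{2,H^2})$ and $\rho_{1,2,3}^*\delta_3=\bl^*(\mathcal{O}\boxtimes F_1)+F_{1,2}$ then give
\[
\bl^*(L_1\boxtimes P)(-F_{1,2})=\omega_{X^{[1,2,3]}}\otimes\rho_{1,2,3}^*N_{3,H^2}^{\,n}\otimes\bl^*\bigl(H^{m_1-2n}\otimes B_1\boxtimes\rho_{1,2}^*T_{2,H^{m_2-2n}\otimes B_2}\bigr)\otimes\bl^*N.
\]
Here $N_{3,H^2}$ is globally generated because $H^2$ is $2$-very ample, so it is nef. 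The next factor is nef and big once $m_1,m_2\ge 2n+1$. Kawamata--Viehweg on the Gorenstein canonical variety $X^{[1,2,3]}$ then finishes the first claim. For the $A$-version, I would write $A_{2,L_2}=N_{2,\omega_X\otimes H^{m_2-1}\otimes B_2}\otimes N_{2,H}$ and absorb the nef factor $\rho_{1,2}^*N_{2,H}$ into $N$, exactly as in Proposition \ref{prop:cohvanB}.

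For (2), the argument is the same one level up. I would express $\rho_{1,2,3}^*M_{3,L_1}$ as $\pr_{2,*}(\mathscr{I}_{\mathcal{W}_{1,2,3}}\otimes(L_1\boxtimes\mathcal{O}))$, using that $L_1$ is $2$-very ample, and pass to the blow-up $X^{[1,2,3,4]}$, which is Gorenstein canonical. There I would split off $\omega_{X^{[1,2,3,4]}}\otimes\rho_{1,2,3,4}^*N_{4,H^3}^{\,n}$, where $N_{4,H^3}$ is nef since $H^3$ is $3$-very ample. What remains is nef and big for $m_1,m_2\ge 3n+1$. The statement on $X^{[2,3]}$ follows because $\mathcal{O}_{X^{[2,3]}}$ is a direct summand of $\rho'_{1,2,3,*}\mathcal{O}_{X^{[1,2,3]}}$. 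For the $A$-version, I would use $A_{3,L_2}=N_{3,\omega_X\otimes H^{m_2-2}\otimes B_2}\otimes N_{3,H^2}$, which accounts for the shift to $m_2\ge 3n+3$.
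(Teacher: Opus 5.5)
Your proposal is correct and follows the paper's proof essentially step for step. The paper uses the same pushforward description of $\rho^*M$ via $\mathscr{I}_{\mathcal{W}}$, the same passage to the blow-up $X^{[1,2,3]}$ (resp.\ $X^{[1,2,3,4]}$), and Kawamata--Viehweg with the same decomposition: your nef and big factor $\bl^*(H^{m_1-2n}\otimes B_1\boxtimes\rho_{1,2}^*T_{2,H^{m_2-2n}\otimes B_2})$ is just the paper's $\rho_{1,2,3}^*T_{3,H}$ combined with its nef remainder. The only difference is that the paper asserts the identification of cohomology across the blow-up without proof, whereas you justify it by relative Kawamata--Viehweg together with the reducedness of $\mathcal{W}_{1,2}$; that justification is sound.
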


\begin{proof}
$(1)$ This was shown in \cite[Lemma 4.2]{Choi.Lacini.Park.Sheridan.25}, but we include the proof for the reader's convenience. Let $\pr_2 \colon X \times X^{[1,2]} \to X^{[1,2]}$ be the second projection, and consider $\mathcal{W}_{1,2}=\{(x, (y, \xi)) \mid x,y \in \xi\} \subseteq X \times X^{[1,2]}$. As $L_1$ is very ample and $H^i(X, L_1)=0$ for $i>0$, we find
$$
R^i \pr_{2,*} (\mathscr{I}_{\mathcal{W}_{1,2}} \otimes (L_1 \boxtimes \mathcal{O}_{X^{[1,2]}})) = \begin{cases} \rho_{1,2}^* M_{2,L_1} & \text{for $i=0$} \\ 0 & \text{for $i>0$}. \end{cases}
$$
Then we have
$$
H^i(X^{[1,2]}, \rho_{1,2}^* (M_{2,L_1} \otimes N_{2,L_2}) \otimes N) = H^i(X \times X^{[1,2]}, \mathscr{I}_{\mathcal{W}_{1,2}} \otimes (L_1 \boxtimes (\rho_{1,2}^* N_{2,L_2} 
\otimes N)))~~\text{ for $i>0$}.  
$$
Recall that $X^{[1,2,3]}$ is obtained by the blow-up $\bl_{1,2,3} \colon X^{[1,2,3]} \to X \times X^{[1,2]}$ of $X \times X^{[1,2]}$ along $\mathcal{W}_{1,2}$ with exceptional divisor $F_{1,2}$ and $X^{[1,2,3]}$ has Gorenstein canonical singularities. Then
$$
H^i(X \times X^{[1,2]}, \mathscr{I}_{\mathcal{W}_{1,2}} \otimes (L_1 \boxtimes (\rho_{1,2}^* N_{2,L_2} \otimes N))) = H^i(X^{[1,2,3]}, \bl_{1,2,3}^* (L_1 \boxtimes (\rho_{1,2}^* N_{2,L_2} \otimes N))(-F_{1,2}))~~\text{ for $i>0$}.
$$
Since we may write
\begin{align*}
&\bl_{1,2,3}^* (L_1 \boxtimes (\rho_{1,2}^* N_{2,L_2} \otimes N))(-F_{1,2})\\
&= \omega_{X^{[1,2,3]}} \otimes \underbrace{\rho_{1,2,3}^* T_{3, H}}_{\text{nef and big}} \otimes \underbrace{\rho_{1,2,3}^* N_{3, H^2}^n \otimes \bl_{1,2,3}^*( (H^{m_1-2n-1} \otimes B_1) \boxtimes (\rho_{1,2}^* T_{2, H^{m_2-2n-1} \otimes B_2} \otimes N))}_{\text{nef}}.
\end{align*}
the first assertion follows from Kawamata--Viehweg vanishing.  As $\rho_{1,2}^* A_{2,L_2} = \rho_{1,2}^* N_{2, \omega_X \otimes H^{m_1-1} \otimes B_2} \otimes \rho_{1,2}^* N_{2, H}$ and $\rho_{1,2}^* N_{2, H}$ is nef, the first assertion implies the second assertion.

\medskip

\noindent $(2)$ Recall that there is a finite morphism $\rho_{1,2,3}' \colon X^{[1,2,3]} \to X^{[2,3]}$ and $X^{[1,2,3]}$ has Gorenstein canonical singularities. Note that $\rho_{1,2,3} = \rho_{2,3} \circ \rho_{1,2,3}'$ and $\mathcal{O}_{X^{[2,3]}}$ is a direct summand of $\rho_{1,2,3,*}' \mathcal{O}_{X^{[1,2,3]}}$. Thus we only need to prove the cohomology vanishing on $X^{[1,2,3]}$.
Let $\pr_2 \colon X \times X^{[1,2,3]} \to X^{[1,2,3]}$ be the second projection, and consider $\mathcal{W}_{1,2,3}=\{ (x, (y,\eta, \xi)) \mid x \in \xi, y \in \eta, \eta \subseteq \xi \} \subseteq X \times X^{[1,2,3]}$. As $L_1$ is $2$-very ample and $H^i(X, L_1) = 0$ for $i>0$, we find
$$
R^i \pr_{2,*} (\mathscr{I}_{\mathcal{W}_{1,2,3}} \otimes (L_1 \boxtimes \mathcal{O}_{X^{[1,2,3]}})) = \begin{cases} \rho_{1,2,3}^* M_{3,L_1} & \text{for $i=0$}\\ 0 & \text{for $i>0$}. \end{cases}
$$
Then we have
$$
H^i(X^{[1,2,3]}, \rho_{1,2,3}^* (M_{3, L_1} \otimes N_{3,L_2}) \otimes N')
= H^i(X \times X^{[1,2,3]}, \mathscr{I}_{\mathcal{W}_{1,2,3}} \otimes (L_1 \boxtimes (\rho_{1,2,3}^* N_{3, L_2} \otimes N')))~~\text{ for $i>0$}. 
$$
Recall that $X^{[1,2,3,4]}$ is obtained by the blow-up $\bl_{1,2,3,4} \colon X^{[1,2,3,4]} \to X \times X^{[1,2,3]}$ of $X \times X^{[1,2,3]}$ along $\mathcal{W}_{1,2,3}$ with exceptional divisor $F_{1,2,3}$ and $X^{[1,2,3,4]}$ has Gorenstein canonical singularities. Then 
$$
H^i(X \times X^{[1,2,3]}, \mathscr{I}_{\mathcal{W}_{1,2,3}} \otimes (L_1 \boxtimes (\rho_{1,2,3}^* N_{3, L_2} \otimes N')))
= H^i(X^{[1,2,3,4]}, \bl_{1,2,3,4}^* (L_1 \boxtimes (\rho_{1,2,3}^* N_{3, L_2} \otimes N'))(-F_{1,2,3}))
$$
for $i>0$. Since we may write
\begin{align*}
& \bl_{1,2,3,4}^* (L_1 \boxtimes (\rho_{1,2,3}^* N_{3, L_2} \otimes N'))(-F_{1,2,3})\\
& = \omega_{X^{[1,2,3,4]}} \otimes \underbrace{\rho_{1,2,3,4}^* T_{4, H}}_{\text{nef and big}} \otimes \underbrace{\rho_{1,2,3,4}^* N_{4, H^3}^n \otimes \bl_{1,2,3,4}^* ( (H^{m_1 - 3n-1} \otimes B_1) \boxtimes (\rho_{1,2,3}^* T_{3, H^{m_2 - 3n-1} \otimes B_2} \otimes N'))}_{\text{nef}},
\end{align*}
the first assertion follows from Kawamata--Viehweg vanishing. As $\rho_{1,2,3}^* A_{3, L_2} = \rho_{1,2,3}^* N_{3, \omega X \otimes H^{m-2} \otimes B_2} \otimes \rho_{1,2,3}^* N_{3, H^2}$ and $\rho_{1,2,3}^* N_{3, H^2}$ is nef, the first assertion implies the second assertion. 
\end{proof}

\begin{prop}\label{prop:mainvanishing2}
Let $L_1:=\omega_X \otimes H^{m_1} \otimes B_1$, $L_2:=\omega_X \otimes H^{m_2} \otimes B_2$, and $L_3:=\omega_X \otimes H^{m_3} \otimes B_3$ be line bundles on $X$ with $H$ very ample, $B_1, B_2, B_3$ nef for integers $m_1, m_2, m_3 \geq 0$. Then we have the following:
\begin{enumerate}
\item For a nef line bundle $N$ on $X^{[1,2]}$, if $m_1, m_2, m_3 \geq 3n+2$, then  
$$
H^i(X^{[1,2]}, \rho_{1,2}^* (M_{2,L_1} \otimes M_{2, L_2} \otimes A_{2, L_3}) \otimes N) = 0~~\text{ for $i>0$}. 
$$
\item If $m_1, m_2, m_3 \geq 3n+1$, then 
$$
H^i(X^{[2]}, M_{2,L_1} \otimes M_{2, L_2} \otimes N_{2, L_3}) = 0~~\text{ for $i>0$}. 
$$
\end{enumerate}
\end{prop}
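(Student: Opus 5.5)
We follow the strategy of Proposition \ref{prop:mainvanishing1}: we realize one factor $M_{2,L_1}$ as a pushforward, which moves the computation up one level in the nested Hilbert scheme tower. Then we apply Proposition \ref{prop:mainvanishing1}(2) there.

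For (1), let $\pr_2 \colon X \times X^{[1,2]} \to X^{[1,2]}$ be the projection and $\mathcal{W}_{1,2} \subseteq X \times X^{[1,2]}$ as before. Since $L_1$ is very ample with no higher cohomology, $R^i\pr_{2,*}(\mathscr{I}_{\mathcal{W}_{1,2}} \otimes (L_1 \boxtimes \mathcal{O}))$ equals $\rho_{1,2}^*M_{2,L_1}$ for $i=0$ and vanishes for $i>0$. Twisting by the pullback of the vector bundle $\rho_{1,2}^*(M_{2,L_2}\otimes A_{2,L_3})\otimes N$ and blowing up along $\mathcal{W}_{1,2}$, which has rational singularities on the blow-up side, reduces the claim to
$$
H^i\big(X^{[1,2,3]}, \bl_{1,2,3}^*\big(L_1 \boxtimes \rho_{1,2}^*(M_{2,L_2}\otimes A_{2,L_3})\otimes N\big)(-F_{1,2})\big)=0 \quad \text{for } i>0.
$$
Next we express the bundle $\bl_{1,2,3}^*\rho_{1,2}^*$ of tautological data on $X^{[1,2,3]}$ in terms of $\rho_{1,2,3}^*$ of rank-$3$ objects. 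The combination $\bl_{1,2,3}^*(L_1\boxtimes \rho_{1,2}^* A_{2,L_3})(-F_{1,2})$ should be rewritten using $\rho_{1,2,3}^*T_{3,\cdot}$ and $\rho_{1,2,3}^*\delta_3 = \bl^*(\delta_2)+F_{1,2}$, as $\rho_{1,2,3}^*N_{3,L'}$ tensored with a nef correction, after redistributing positivity between $L_1$ and $L_3$. Meanwhile $\bl_{1,2,3}^*\rho_{1,2}^*M_{2,L_2}$ should be compared with $\rho_{1,2,3}^*M_{3,L_2}$ via the exact sequence
$$
0\to \rho_{2,3}^*M_{3,L_2}\to \tau_{2,3}^*M_{2,L_2}\to \res_{2,3}^*L_2(-F_2)
$$
pulled back to $X^{[1,2,3]}$. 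This sequence is surjective on the right when $L_2$ is $2$-very ample, i.e.\ $m_2\ge n+3$. It splits the problem into the term $\rho_{1,2,3}^*(M_{3,L_2}\otimes N_{3,L'})\otimes N'$, which is handled by Proposition \ref{prop:mainvanishing1}(2) provided the effective degrees stay $\ge 3n+1$, and a line bundle term $\res^*L_2(-F_2)\otimes(\cdots)$, which is handled directly by Kawamata--Viehweg on $X^{[1,2,3]}$ as in Proposition \ref{prop:cohvanB}(2) using $\omega_{X^{[1,2,3]}}$.

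For (2), we work on $X^{[2]}$ itself. We resolve $M_{2,L_1}$ as $\pr_{2,*}(\mathscr{I}_{\mathcal{Z}_2}\otimes (L_1\boxtimes\mathcal{O}))$ on $X\times X^{[2]}$ and blow up $\mathcal{Z}_2$ to get $X^{[2,3]}$, which is smooth, with $\res_{2,3}$. Higher direct images vanish by the same argument. This reduces the claim to $H^i(X^{[2,3]}, \bl_{2,3}^*(L_1\boxtimes(M_{2,L_2}\otimes N_{2,L_3}))(-F_2))=0$. Using $\bl_{2,3}^*(L_1\boxtimes N_{2,L_3})(-F_2) = \rho_{2,3}^*N_{3,?}$-type identities, namely $\rho_{2,3}^*N_{3,L} = \tau_{2,3}^*N_{2,L}\otimes\res_{2,3}^*L(-F_2)$, together with the same sequence for $M$, this becomes a vanishing of the form $H^i(X^{[2,3]},\rho_{2,3}^*(M_{3,L_2}\otimes N_{3,L})\otimes N)=0$, covered by Proposition \ref{prop:mainvanishing1}(2) under $m\ge 3n+1$, plus a line bundle term. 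Alternatively, pulling back to $X^{[1,2]}$, which is finite flat over $X^{[2]}$ with $\mathcal{O}$ a direct summand, allows us to reuse the computation of (1) with $N_{2,L_3}$ instead of $A_{2,L_3}$. This explains the bound $3n+1$ versus $3n+2$.

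We expect the main obstacle to be the bookkeeping of positivity. The line bundle $L_1\boxtimes N_{2,L_3}$ carries two copies of $\omega_X$, and the twist must absorb $\omega_{X^{[1,2,3]}}$ or $\omega_{X^{[2,3]}}$, which involves $(n-1)F$ and $(n-2)\delta$ terms, while leaving the nef multiples $N_{3,H^2}^n$ and a nef and big $T_{3,H}$. We also need to check that the line bundle cokernel term $\res^*L_2(-F)$ in the $M$-sequence is still covered, including its $H^{i-1}$ contribution through the long exact sequence. The hypothesis $m_j\ge 3n+2$ (resp.\ $3n+1$) is calibrated exactly so that each of these decompositions has nonnegative exponents.
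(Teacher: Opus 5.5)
Your part (1) is the paper's argument: you push $M_{2,L_1}$ up to $X^{[1,2,3]}$ and split $\bl_{1,2,3}^*\rho_{1,2}^*M_{2,L_2}$ against $\rho_{1,2,3}^*M_{3,L_2}$. The $M_{3,L_2}$ term then follows from Proposition~\ref{prop:mainvanishing1}(2), and the line bundle $\rho_{1,2,3}^*A_{3,\omega_X\otimes H^{2n+3}}\otimes(\text{nef})$ from Proposition~\ref{prop:cohvanB}(2). Your treatment of the first term of part (2) also matches the paper.

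The gap is the term you leave in (2) as ``plus a line bundle term'', namely
\[
H^i\bigl(X^{[2,3]}, (\res_{2,3}^*(L_1\otimes L_2)\otimes \tau_{2,3}^*N_{2,L_3})(-2F_2)\bigr)=0 \quad (i>0).
\]
This is exactly where the computation of (1) does not carry over. The twist $-2F_2$ is matched by only one $-\delta_2$, because the bundle is $N_{2,L_3}$ rather than $A_{2,L_3}$.

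In your ``alternatively'' route, the corresponding term on $X^{[1,2,3]}$ is $\bl_{1,2,3}^*((L_1\otimes L_2)\boxtimes\rho_{1,2}^*N_{2,L_3})(-2F_{1,2})$. This equals $\rho_{1,2,3}^*A_{3,\omega_X\otimes H^{2n+3}}\otimes(\text{nef})\otimes\tau_{1,2,3}^*\mathcal{O}_{X^{[2]}}(\delta_2)$. Equivalently, it is $\rho_{1,2,3}^*N_{3,L'}$ times a nef bundle times $\mathcal{O}(-F_{1,2})$. Neither $\tau_{1,2,3}^*\mathcal{O}(\delta_2)$ nor $\mathcal{O}(-F_{1,2})$ is nef, so Proposition~\ref{prop:cohvanB}(2) does not apply.

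A direct Kawamata--Viehweg decomposition on $X^{[2,3]}$ fails for the same reason. Subtracting $\omega_{X^{[2,3]}}$ leaves a twist $-(n-1)\tau_{2,3}^*\delta_2-(n+1)F_2$. After using $n-1$ copies of $\rho_{2,3}^*N_{3,H^2}$, a $-2F_2$ remains with no $\delta_2$ to pair it with. So your claim that the hypotheses make every decomposition work is unsupported for precisely this term.

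The missing idea is the finite map $\rho'_{1,2,3}\colon X^{[1,2,3]}\to X^{[2,3]}$ with
\[
\rho'_{1,2,3,*}\mathcal{O}_{X^{[1,2,3]}}=\mathcal{O}_{X^{[2,3]}}\oplus \tau_{2,3}^*\mathcal{O}_{X^{[2]}}(-\delta_2).
\]
Using the second summand, the group above is a direct summand of
\[
H^i\bigl(X^{[1,2,3]}, \res_{1,2,3}^*(L_1\otimes L_2)\otimes\tau_{1,2,3}^*T_{2,L_3}(-2F_{1,2})\bigr).
\]
In other words, $N_{2,L_3}$ is traded for $T_{2,L_3}$ and the offending $\delta_2$ disappears. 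This line bundle is $b^*((L_1\otimes L_2)\boxtimes L_3\boxtimes L_3)(-2F_{1,2})$ for the composite blow-up $b\colon X^{[1,2,3]}\to X^3$. Since $b_*\mathcal{O}(-2F_{1,2})=\mathscr{I}^2_{\Delta_{1,2}\cup\Delta_{1,3}}$, the vanishing follows from Proposition~\ref{prop:cohvanA} with $k=3$ and $a=2$. That only needs exponent $\geq 2n+3$ on $L_1\otimes L_2$ and $\geq n+2$ on $L_3$. Without this step or a genuine substitute, part (2) is not established.
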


\begin{proof}
$(1)$ Recall from the proof of Proposition \ref{prop:mainvanishing1} $(1)$ that
$$
H^i(X^{[1,2]}, \rho_{1,2}^* (M_{2,L_1} \otimes M_{2, L_2} \otimes A_{2, L_3}) \otimes N)
= H^i(X^{[1,2,3]}, \bl_{1,2,3}^*(L_1 \boxtimes (\rho_{1,2}^* (M_{2, L_2} \otimes A_{2,L_3}) \otimes N))(-F_{1,2}))
$$
for $i>0$. 
We have a short exact sequence
$$
0 \longrightarrow \rho_{1,2,3}^* M_{3, L_2} \longrightarrow \bl_{1,2,3}^* (\mathcal{O}_X \boxtimes \rho_{1,2}^* M_{2, L_2}) \longrightarrow \bl_{1,2,3}^* (L_2 \boxtimes \mathcal{O}_{X^{[1,2]}})(-F_{1,2}) \longrightarrow 0.
$$
It is enough to show that
\begin{align*}
&H^i(X^{[1,2,3]}, \rho_{1,2,3}^* M_{3, L_2} \otimes \bl_{1,2,3}^* (L_1 \boxtimes (\rho_{1,2}^* A_{2,L_3} \otimes N))(-F_{1,2}))=0\\
& H^i(X^{[1,2,3]}, \bl_{1,2,3}^* ( (L_1 \otimes L_2) \boxtimes (\rho_{1,2}^* A_{2,L_3} \otimes N))(-2F_{1,2}))=0.
\end{align*}
Since we may write
\begin{align*}
& \bl_{1,2,3}^* (L_1 \boxtimes (\rho_{1,2}^* A_{2,L_3} \otimes N))(-F_{1,2})\\
&= \rho_{1,2,3}^* N_{3, \omega_X \otimes H^{3n+1}} \otimes \underbrace{\bl_{1,2,3}^* ((H^{m_1 - 3n-1} \otimes B_1) \boxtimes (\rho_{1,2}^* N_{2, H^{m_3-3n-1} \otimes B_3} \otimes N))}_{\text{nef}}\\
& \bl_{1,2,3}^* ( (L_1 \otimes L_2) \boxtimes (\rho_{1,2}^* A_{2,L_3} \otimes N))(-2F_{1,2})\\
& = \rho_{1,2,3}^* A_{3, \omega_X \otimes H^{2n+3}} \otimes \underbrace{\bl_{1,2,3}^* ((H^{m_1 - 2n-3} \otimes B_1 \otimes L_2) \boxtimes (\rho_{1,2}^* T_{2, H^{m_3 - 2n-3} \otimes B_3} \otimes N))}_{\text{nef}},
\end{align*}
the assertion follows from Proposition \ref{prop:mainvanishing1} $(2)$ and Proposition \ref{prop:cohvanB} $(2)$.

\medskip

\noindent $(2)$ As $L_1$ is very ample and $H^i(X, L_1) = 0$ for $i>0$, we find
$$
R^i \tau_{2,3,*} (\res_{2,3}^* L_1 (-F_2)) = \begin{cases} M_{2, L_1} & \text{for $i=0$} \\ 0 & \text{for $i>0$}. \end{cases}
$$
Then we have
$$
H^i(X^{[2]}, M_{2,L_1} \otimes M_{2, L_2} \otimes N_{2, L_3})
= H^i(X^{[2,3]}, (\res_{2,3}^* L_1 \otimes \tau_{2,3}^* (M_{2, L_2} \otimes N_{2, L_3}))(-F_2))~~\text{ for $i>0$.}
$$
Consider the short exact sequence
$$
0 \longrightarrow \rho_{2,3}^* M_{3, L_2} \longrightarrow \tau_{2,3}^* M_{2, L_2} \longrightarrow \res_{2,3}^* L_2 (-F_2) \longrightarrow 0. 
$$
The desired cohomology vanishing can be deduced from
$$
H^i(X^{[2,3]}, \rho_{2,3}^* M_{3, L_2} \otimes (\res_{2,3}^* L_1 \otimes \tau_{2,3}^* N_{2, L_3})(-F_2))=0~\text{ and }~
H^i(X^{[2,3]}, (\res_{2,3}^* (L_1 \otimes L_2) \otimes \tau_{2,3}^* N_{2, L_3})(-2F_2))=0
$$
for $i>0$. As we may write
$$
(\res_{2,3}^* L_1 \otimes \tau_{2,3}^* N_{2, L_3})(-F_2) = \rho_{2,3}^* N_{3, \omega_X \otimes H^{3n+1}} \otimes \underbrace{\res_{2,3}^* (H^{m_1 - 3n-1} \otimes B_1) \otimes \tau_{2,3}^* T_{2, H^{m_3-3n-1} \otimes B_3}}_{\text{nef}},
$$
the first holds by Proposition \ref{prop:mainvanishing1} $(2)$. For the second, notice that $\rho_{1,2,3,*}' \mathcal{O}_{X^{[1,2,3]}} = \mathcal{O}_{X^{[2,3]}} \oplus \tau_{2,3}^* \mathcal{O}_{X^{[2]}}(-\delta_2)$. Thus it suffices to show that
$$
H^i(X^{[1,2,3]}, \res_{1,2,3}^* (L_1 \otimes L_2) \otimes \tau_{1,2,3}^* T_{2, L_3}(-2F_{1,2}))=0~~\text{ for $i>0$}.
$$
Recall that $X^{[1,2,3]}$ is obtained from $X^3$ by the composition of blow-ups
$$
b \colon X^{[1,2,3]} \xrightarrow{~\bl_{1,2,3}~} X \times X^{[1,2]} \xrightarrow{~\operatorname{id}_X \times \bl_{1,2}~} X^3.
$$
We may write
$$
\res_{1,2,3}^* (L_1 \otimes L_2) \otimes \tau_{1,2,3}^* T_{2, L_3}(-2F_{1,2})
= b^* ((L_1 \otimes L_2) \boxtimes L_3 \boxtimes L_3)(-2F_{1,2}).
$$
Note that 
$$
b_* \mathcal{O}_{X^{[1,2,3]}}(-2F_{1,2}) = (\operatorname{id}_X \times \bl_{1,2})_* \mathscr{I}_{\mathcal{W}_{1,2}}^2 = \mathscr{I}_{\Delta_{1,2} \cup \Delta_{1,3}}^2. 
$$
Then 
$$
H^i(X^{[1,2,3]}, \res_{1,2,3}^* (L_1 \otimes L_2) \otimes \tau_{1,2,3}^* T_{2, L_3}(-2F_{1,2}))
= H^i(X^3, \mathscr{I}_{\Delta_{1,2} \cup \Delta_{1,3}}^2 \otimes ((L_1 \otimes L_2) \boxtimes L_3 \boxtimes L_3))=0~~\text{ for $i>0$}
$$
by Proposition \ref{prop:cohvanA}.
\end{proof}

\section{Effective results for secant varieties}\label{sec:proofs}
\noindent This section is devoted to proving the main theorems. As always, $X$ is a smooth projective complex variety of dimension $n$. In the case $n=1$, better effective results than the bounds asserted in the main theorems have been established \cite{Agostini.Park.25, Ein.Niu.Park.20}, so we assume that $n \geq 2$.

\subsection{Proof of Theorem \ref{thm:main1}} 
We consider a line bundle $L = \omega_X \otimes H^m \otimes B$ on $X$ for an integer $m \geq 0$, where $H$ is very ample and $B$ is nef. Recall that if $m \geq n+1+p$, then $L$ is $p$-very ample. We assume that 
$$
\text{either $k=2$ and $m \geq 2n+2$ or $k=3$ and $m \geq 3n+3$}.
$$
In particular, $L$ is $(2k-1)$-very ample since $nk+k \geq n+2k$ for $k=2,3$ and $n \geq 2$. We consider the $k$-secant variety $\sigma_k=\sigma_k(X, L)$ of the embedding $X \subseteq \mathbb{P} H^0(X, L) = \mathbb{P}^r$. There is a birational morphism $\alpha_k \colon B^k \to \sigma_k$ given by the complete linear system $|\mathcal{O}_{B^{k}}(1)|$ of the tautological line bundle on $B^k = \mathbb{P}(E_{k,L})$ with canonical projection $\pi_k \colon B^k \to X^{[k]}$.

\begin{lem}\label{lem:maintechnical1}
For a nef line bundle $N$ on $X^{[k]}$ and an integer $\ell \geq 0$, we have the following:
\begin{enumerate}
    \item $H^i(B^k,  \pi_k^* (A_{k,L} \otimes N) \otimes \mathcal{O}_{B^k}(\ell))=0$ for $i>0$. 
    \item $H^i(B^k, \pi_k^* (M_{k,L} \otimes A_{k,L} \otimes N) \otimes \mathcal{O}_{B^k}(\ell))=0$ for $i>0$.
\end{enumerate}
\end{lem}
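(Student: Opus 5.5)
Since $B^k=\mathbb{P}(E_{k,L})\to X^{[k]}$ is a projective bundle and $\ell\geq 0$, I would first push everything down to $X^{[k]}$. We have $\pi_{k,*}\mathcal{O}_{B^k}(\ell)=S^\ell E_{k,L}$ and $R^j\pi_{k,*}\mathcal{O}_{B^k}(\ell)=0$ for $j>0$. Hence both statements reduce to
$$
H^i(X^{[k]}, S^\ell E_{k,L}\otimes A_{k,L}\otimes N)=0 \quad\text{and}\quad H^i(X^{[k]}, S^\ell E_{k,L}\otimes M_{k,L}\otimes A_{k,L}\otimes N)=0 \quad\text{for } i>0.
$$
When $\ell=0$ these are exactly the vanishings already established. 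For $k=2$, (1) is Proposition~\ref{prop:cohvanB}~(1), which needs $m\ge n+2$, and (2) is Proposition~\ref{prop:mainvanishing1}~(1), which needs $m\ge 2n+2$. For $k=3$, (1) is Proposition~\ref{prop:cohvanB}~(2) and (2) is Proposition~\ref{prop:mainvanishing1}~(2), needing $m\ge 3n+3$. In both cases I would pass from $X^{[k-1,k]}$ (resp.\ $X^{[1,2]}$) down to $X^{[k]}$ using that $\rho_{k-1,k}$ (for $k=2$, $\rho_{1,2}$ is finite flat) has $\mathcal{O}_{X^{[k]}}$ as a direct summand of $R\rho_{*}\mathcal{O}$. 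Here $\rho_{k-1,k}$ is generically finite between varieties with rational singularities, and $X^{[k]}$ is smooth, so the trace splits.

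For $\ell\ge1$, I would induct on $\ell$ using the tautological filtration. I pull back along $\rho_{k-1,k}$ (or $\rho_{1,2,3}$ for $k=3$) and use the short exact sequence
$$
0\to \res^*L(-F_{k-1})\to \rho^*E_{k,L}\to \tau^*E_{k-1,L}\to 0.
$$
This filters $\rho^*S^\ell E_{k,L}$ by pieces $\res^*L^{j}(-jF)\otimes \tau^*S^{\ell-j}E_{k-1,L}$. Iterating down to $X^{[1,2,3]}$ or $X^{[1,2]}$, every graded piece becomes a line bundle of the form $\bl^*(L^{a}\boxtimes\cdots)(-\sum c F)$ with $c$ at most the power of $L$ it accompanies.

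The cleaner alternative is to avoid that filtration altogether. On $B^k$, the line bundle $\mathcal{O}_{B^k}(1)$ is globally generated, hence nef. So I would write $\pi_k^*(A_{k,L}\otimes N)\otimes\mathcal{O}_{B^k}(\ell)=\omega_{B^k}\otimes(\text{nef and big})$ directly. Concretely, $\omega_{B^k}=\pi_k^*(\omega_{X^{[k]}}\otimes\det E_{k,L})\otimes\mathcal{O}_{B^k}(-k)=\pi_k^*(T_{k,\omega_X\otimes L}((n-2)\delta_k)(-\delta_k))(-k)$. Therefore
$$
\pi_k^*(A_{k,L}\otimes N)(\ell)=\omega_{B^k}\otimes\pi_k^*\bigl(T_{k,L\otimes\omega_X^{-1}}(-(n+1)\delta_k)\otimes N\bigr)\otimes\mathcal{O}_{B^k}(\ell+k).
$$
Since $L\otimes\omega_X^{-1}=H^m\otimes B$ and $T_{k,H^{m}}(-(n+1)\delta_k)=T_{k,H^{m-(n+1)(k-1)}}\otimes N_{k,H^{k-1}}^{n+1}$, this line bundle is nef as soon as $m\ge(n+1)(k-1)$. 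It is also big, since $T_{k,H}$ pulled back plus a positive multiple of $\mathcal{O}(1)$ is big on $B^k$. Then Kawamata--Viehweg gives (1) under our hypotheses.

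For (2), I would use the same trick on the smooth variety $B^{k-1,k}$, or on the pullback to $X\times B^k$ blown up along the universal family, mirroring Proposition~\ref{prop:mainvanishing1}. I would first realise $\pi_k^*M_{k,L}$ as $\pr_{2,*}$ of $\mathscr{I}_{\mathcal{W}}\otimes(L\boxtimes\mathcal{O})$ with vanishing higher direct images. Then I would replace the cohomology by that of a line bundle $\bl^*(L\boxtimes \pi^*(A_{k,L}\otimes N)(\ell))(-F)$ on $\mathbb{P}$ of the pulled-back bundle over $X^{[k-1,k]}$ (resp.\ $X^{[1,2,3,4]}$-level, for $k=3$). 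That space has canonical Gorenstein singularities, being a projective bundle over such a variety. Its canonical bundle is $\bl^*(\omega_X\boxtimes\omega)((n-1)F)$, and the positivity bookkeeping proceeds exactly as in Proposition~\ref{prop:mainvanishing1}, with the extra nef summand $\mathcal{O}(\ell+k)$.

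The main obstacle is checking the canonical class on the projectivised bundle over the nested Hilbert scheme. The subtle point there is that $-F$ and $\delta$ coefficients absorb correctly into the nef classes $N_{k+1,H^{k}}^n$. I expect this to consume exactly the margin $m\ge (k)(n+1)$, with $A$ versus $N$ costing one more copy of $H$. This matches the bounds $2n+2$ and $3n+3$.
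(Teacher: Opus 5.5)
Your proposal has a genuine gap. The Kawamata--Viehweg step on $B^k$, which carries the whole argument, rests on a miscomputation. By your own formula, $\omega_{B^k}=\pi_k^*\bigl(T_{k,\omega_X\otimes L}((n-3)\delta_k)\bigr)\otimes\mathcal{O}_{B^k}(-k)$ contains $T_{k,L}$ through $\det E_{k,L}=N_{k,L}$. So the factor $T_{k,L}$ of $A_{k,L}$ cancels, and the correct difference, using $\mathcal{O}_{B^k}(k)=\pi_k^*A_{k,L}\otimes\mathcal{O}_{B^k}(Z_{k-1}^k)$, is
\begin{align*}
\pi_k^*(A_{k,L}\otimes N)\otimes\mathcal{O}_{B^k}(\ell)\otimes\omega_{B^k}^{-1}
&=\pi_k^*\bigl(T_{k,\omega_X^{-1}}(-(n-1)\delta_k)\otimes N\bigr)\otimes\mathcal{O}_{B^k}(\ell+k)\\
&=\pi_k^*\bigl(T_{k,L\otimes\omega_X^{-1}}(-(n+1)\delta_k)\otimes N\bigr)\otimes\mathcal{O}_{B^k}(\ell)\otimes\mathcal{O}_{B^k}(Z_{k-1}^k).
\end{align*}
Your formula has $\mathcal{O}_{B^k}(\ell+k)$ where $\mathcal{O}_{B^k}(\ell)\otimes\mathcal{O}_{B^k}(Z_{k-1}^k)$ belongs, so it is off by $\pi_k^*A_{k,L}$.

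The true class is not nef, for any $m$. All the positivity of $L$ sits in $\mathcal{O}_{B^k}(1)=\alpha_k^*\mathcal{O}_{\sigma_k}(1)$, which is trivial on the $n$-dimensional fibres of $\alpha_k$ inside $Z_{k-1}^k$. Concretely, for $k=2$ you have $\mathcal{O}_{B^2}(1)|_{Z_1^2}=\res_{1,2}^*L$ on $Z_1^2=X^{[1,2]}$. On a fibre $\bl_xX$ of $\res_{1,2}$, where $F_1$ restricts to the exceptional divisor $E_x$ of the blow-up $b_x\colon \bl_xX\to X$, the class restricts to
\[
b_x^*\omega_X^{-1}(-(n-1)E_x)\otimes N|_{\bl_xX}=\omega_{\bl_xX}^{-1}\otimes N|_{\bl_xX}.
\]
This is not nef when, say, $\omega_X$ is ample and $N=\mathcal{O}_{X^{[2]}}$. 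Your sketch for (2) has the same defect: the tautological bundle on the projective bundle over the blow-up of $X\times X^{[k]}$ is again pulled back from $\mathcal{O}_{B^k}(1)$. The extra factor $\res^*(H^m\otimes B)$ gives no positivity along those fibres.

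Your first route, filtering $\rho^*S^\ell E_{k,L}$ by the sub-line bundle $\res^*L(-F)$, does not rescue this. It produces pieces whose multiplicity along $F$ grows with $\ell$ while the positivity on the other factor stays fixed. For $k=2$ the top piece of $\rho_{1,2}^*(S^\ell E_{2,L}\otimes A_{2,L})$ is $\bl_{1,2}^*(L^{\ell+1}\boxtimes L)(-(\ell+2)F_1)$. It computes $H^i(X\times X,\mathscr{I}_\Delta^{\ell+2}\otimes(L^{\ell+1}\boxtimes L))$, which is in general nonzero for $\ell\gg0$, so the vanishing cannot be checked piece by piece.

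The missing idea is to filter by the divisor $Z_{k-1}^k$ itself, as the paper does. Tensor $0\to\mathcal{O}_{B^k}(-Z_{k-1}^k)\to\mathcal{O}_{B^k}\to\mathcal{O}_{Z_{k-1}^k}\to0$ with $\pi_k^*(A_{k,L}\otimes N)\otimes\mathcal{O}_{B^k}(\ell)$. The subsheaf has the same shape with $(N,\ell)$ replaced by $(A_{k,L}\otimes N,\ell-k)$, so it is handled by induction on $\ell$ because $A_{k,L}$ is nef.

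For $k=2$ the quotient lives on $Z_1^2\cong X^{[1,2]}$, where $\mathcal{O}_{B^2}(1)$ restricts to $\res_{1,2}^*L$ with no $-F_1$ twist. This lands exactly in Proposition~\ref{prop:cohvanB}~(1), resp.\ Proposition~\ref{prop:mainvanishing1}~(1).

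For $k=3$, $Z_2^3$ is non-normal along $Z_1^3$, so the paper works on $B^{2,3}$ instead. It uses $R^i\alpha_{2,3,*}\mathcal{O}_{B^{2,3}}(-\widetilde{\tau}_{2,3}^*Z_1^2)=0$ and the rational singularities of $Z_1^3=\mathcal{Z}_3$. It then runs a second induction on $B^{2,3}$ with the divisor $\widetilde{\tau}_{2,3}^*Z_1^2\cong X^{[1,2,3]}$, ending in Proposition~\ref{prop:cohvanB}~(2), resp.\ Proposition~\ref{prop:mainvanishing1}~(2).
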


\begin{proof}
First, we prove the assertion $(1)$ for $k=2$. We proceed by induction on $\ell$ starting from $\ell = -1$. The assertion is trivial for $\ell=-1$. When $\ell=0$, the assertion is $H^i(X^{[2]}, A_{2,L} \otimes N)=0$ for $i>0$, which follow from Proposition \ref{prop:cohvanB} $(1)$ since $\mathcal{O}_{X^{[2]}}$ is a direct summand of $\rho_{1,2,*} \mathcal{O}_{X^{[1,2]}}$. Assume that $\ell \geq 1$, and consider the short exact sequence
$$
0 \longrightarrow \mathcal{O}_{B^2}(-Z_1^2) \longrightarrow \mathcal{O}_{B^2} \longrightarrow \mathcal{O}_{Z_1^2} \longrightarrow 0. 
$$
Recall that $\mathcal{O}_{B^2}(-Z_2^1) = \pi_2^* A_{2,L} \otimes \mathcal{O}_{B^2}(-2)$ and $Z_1^2 = X^{[1,2]}$. For $i>0$, the desired cohomology vanishing $H^i(B^2,  \pi_2^* (A_{2,L} \otimes N) \otimes \mathcal{O}_{B^2}(\ell))=0$ for $i>0$ is deduced from 
$$
H^i(B^2,  \pi_2^* (A_{2,L}^2 \otimes N) \otimes \mathcal{O}_{B^2}(\ell-2))=0~~\text{ and }~~H^i(X^{[1,2]}, \rho_{1,2}^* (A_{2,L} \otimes N) \otimes \res_{1,2}^* L^{\ell})=0~~\text{ for $i>0$}.
$$
The former holds by induction, and the latter holds by Proposition \ref{prop:cohvanB} $(1)$. We have shown the assertion $(1)$ for $k=2$. The assertion $(2)$ for $k=2$ can be proven similarly using Proposition \ref{prop:mainvanishing1} $(1)$ instead. 

\medskip

\noindent Next, we prove the assertion $(1)$ for $k=3$. We proceed by induction on $\ell$ starting from $\ell=-2$. The assertion is trivial for $\ell=-2$ and $-1$. When $\ell=0$, the assertion is $H^i(X^{[3]}, A_{3,L} \otimes N)=0$ for $i>0$, which follow from Proposition \ref{prop:cohvanB} $(2)$ since $\mathcal{O}_{X^{[3]}}$ is a direct summand of $\rho_{2,3,*} \mathcal{O}_{X^{[2,3]}}$. Assume that $\ell \geq 1$, and consider the short exact sequences
$$
0 \longrightarrow \mathcal{O}_{B^3}(-Z_2^3) \longrightarrow \mathcal{O}_{B^3} \longrightarrow \mathcal{O}_{Z_2^3} \longrightarrow 0~\text{ and }~
0 \longrightarrow \alpha_{2,3,*} \mathcal{O}_{B^{2,3}}(-\widetilde{\tau}_{1,2}^* Z_1^2) \longrightarrow \mathcal{O}_{Z_2^3} \longrightarrow \mathcal{O}_{Z_1^3} \longrightarrow 0.
$$
Recall that $\mathcal{O}_{B^3}(-Z_2^3) = \pi_3^* A_{3,L} \otimes \mathcal{O}_{B^3}(-3)$ and $B^{2,3}=\mathbb{P}(\tau_{2,3}^* E_{2,L})$. There are the canonical projection $\pi_{2,3} \colon B^{2,3} \to X^{[2,3]}$ and a natural morphism $\widetilde{\tau}_{2,3} \colon B^{2,3} \to B^2$. Recall that the morphism $\alpha_{2,3} \colon B^{2,3} \to Z_2^3$ is birational and $R^i \alpha_{2,3,*} \mathcal{O}_{B^{2,3}}(-\widetilde{\tau}_{2,3}^* Z_1^2) = 0$ for $i>0$. In addition, $Z_1^3 = \mathcal{Z}_3$ has rational singularities, and the natural morphism $X^{[2,3]} \to \mathcal{Z}_3$ is a resolution of singularities. For $i>0$, the desired cohomology vanishing $H^i(B^3,  \pi_3^* (A_{3,L} \otimes N) \otimes \mathcal{O}_{B^3}(\ell))=0$ for $i>0$ is deduced from 
\begin{align*}
& H^i(B^3,  \pi_3^* (A_{3,L}^2 \otimes N) \otimes \mathcal{O}_{B^3}(\ell-3))=0,~~ H^i(B^{2,3}, \alpha_{2,3}^* \pi_3^* (A_{3,L} \otimes N) \otimes \mathcal{O}_{B^{2,3}}(\ell)(-\widetilde{\tau}_{2,3}^* Z_1^2)) = 0,~~\text{and}\\
& H^i(X^{[2,3]}, \rho_{2,3}^* (A_{3,L} \otimes N) \otimes \res_{2,3}^*L^{\ell})=0.
\end{align*}
The first holds by induction, and the third holds by Proposition \ref{prop:cohvanB} $(2)$. Recall that $\mathcal{O}_{B^{2,3}}(-\widetilde{\tau}_{2,3}^* Z_1^2) = \pi_{2,3}^* \tau_{2,3}^* A_{2,L} \otimes \mathcal{O}_{B^{2,3}}(-2)$. The second can be rewritten as 
$$
H^i(B^{2,3}, \pi_{2,3}^* (\rho_{2,3}^*(A_{3,L} \otimes N) \otimes \tau_{2,3}^* A_{2,L}) \otimes \mathcal{O}_{B^{2,3}}(\ell-2))=0~~\text{ for $i>0$}.
$$
It suffices to establish the claim that if $a \geq 1$ and $b \geq -1$ are integers, then
$$
H^i(B^{2,3}, \pi_{2,3}^* (\rho_{2,3}^*(A_{3,L} \otimes N) \otimes \tau_{2,3}^* A_{2,L}^a) \otimes \mathcal{O}_{B^{2,3}}(b))=0~~\text{ for $i>0$}.
$$
To this end, we proceed by induction on $b$. When $b=-1$, the claim is trivial. When $b=0$, the claim is $H^i(X^{[2,3]}, \rho_{2,3}^*(A_{3,L} \otimes N) \otimes \tau_{2,3}^* A_{2,L}^a)=0$, which follows from Proposition \ref{prop:cohvanB} $(2)$. Assume that $b \geq 1$, and consider the short exact sequence
$$
0 \longrightarrow \mathcal{O}_{B^{2,3}}(-\widetilde{\tau}_{2,3}^* Z_1^2) \longrightarrow \mathcal{O}_{B^{2,3}} \longrightarrow \mathcal{O}_{\widetilde{\tau}_{2,3}^* Z_1^2} \longrightarrow 0. 
$$
Recall that $\widetilde{\tau}_{2,3}^* Z_1^2 = X^{[1,2,3]}$ and there is a  morphism $\rho_{1,2,3}' \colon X^{[1,2,3]} \to X^{[2,3]}$. The claim is deduced from 
\begin{align*}
&H^i(B^{2,3}, \pi_{2,3}^* (\rho_{2,3}^*(A_{3,L} \otimes N) \otimes \tau_{2,3}^* A_{2,L}^{a+1}) \otimes \mathcal{O}_{B^{2,3}}(b-2))=0\\
& H^i(X^{[1,2,3]}, \rho^{\prime *}_{1,2,3} (\rho_{2,3}^* (A_{3,L} \otimes N) \otimes \tau_{2,3}^* A_{2,L}^a \otimes \res_{2,3}^* L^b) )=0.
\end{align*}
The former holds by induction, and the latter holds by Proposition \ref{prop:cohvanB} $(2)$ since
$$
\rho^{\prime *}_{1,2,3}(\rho_{2,3}^* (A_{3,L} \otimes N) \otimes \tau_{2,3}^* A_{2,L}^a \otimes \res_{2,3}^* L^b)
= \rho_{1,2,3}^* (A_{3, L} \otimes N) \otimes \underbrace{\bl_{1,2,3} (L^b \boxtimes \rho_{1,2}^* A_{2,L}^a)}_{\text{nef}},
$$
where $\bl_{1,2,3} \colon X^{[1,2,3]} \to X \times X^{[1,2]}$ is the blow-up of $X \times X^{[1,2]}$ along $\mathcal{W}_{1,2}$. We have shown the assertion $(1)$ for $k=3$. The assertion $(2)$ for $k=3$ can be proven similarly using Proposition \ref{prop:mainvanishing1} $(2)$ instead. 
\end{proof}

\begin{thm}\label{thm:maincohvan1}
We have
$$
H^i(X^{[k]}, \wedge^j M_{k,L} \otimes A_{k,L})=0~~\text{ for $i>0$ and $0 \leq j \leq i$}.
$$
\end{thm}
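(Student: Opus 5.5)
We prove Theorem \ref{thm:maincohvan1} by transferring the vanishing on $X^{[k]}$ to a vanishing on $B^k$ and then using Lemma \ref{lem:maintechnical1}. This is the standard mechanism of \cite{Choi.Lacini.Park.Sheridan.25, Ein.Niu.Park.20}.

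The basic tool is the Koszul complex attached to the surjection $H^0(X,L)\otimes\mathcal{O}_{X^{[k]}}\to E_{k,L}$. Pulled back to $B^k=\mathbb{P}(E_{k,L})$, this surjection composed with $\pi_k^*E_{k,L}\to\mathcal{O}_{B^k}(1)$ gives a surjection $H^0(X,L)\otimes\mathcal{O}_{B^k}\to\mathcal{O}_{B^k}(1)$. Its kernel $K$ sits in an extension
$$
0\to \pi_k^*M_{k,L}\to K\to \pi_k^*\Omega\to 0,
$$
where $\Omega$ denotes the relative kernel, that is, the kernel of $\pi_k^*E_{k,L}\to\mathcal{O}_{B^k}(1)$. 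Since $R\pi_{k,*}$ of twists of the relative kernel by $\mathcal{O}_{B^k}(\ell)$ is controlled (for instance $\pi_{k,*}(\wedge^j\Omega(j))=0$ in the appropriate range), the plan is to relate $\wedge^j M_{k,L}$ to $\wedge^j K$ twisted by powers of $\mathcal{O}_{B^k}(1)$.

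A more direct route, which I would try first, is plain induction on $j$. Taking exterior powers of $0\to M_{k,L}\to H^0(X,L)\otimes\mathcal{O}\to E_{k,L}\to0$ on $X^{[k]}$ gives a long exact sequence
$$
0\to\wedge^jM_{k,L}\to\wedge^jH^0(L)\otimes\mathcal{O}\to\wedge^{j-1}H^0(L)\otimes E_{k,L}\to\cdots\to S^jE_{k,L}\to0 .
$$
Tensoring with $A_{k,L}$ and chasing cohomology, the vanishing $H^i(\wedge^jM_{k,L}\otimes A_{k,L})=0$ for $i\ge j$ reduces to two inputs. The first is
$$
H^{i}(X^{[k]}, S^pE_{k,L}\otimes A_{k,L})=H^i(B^k,\pi_k^*A_{k,L}\otimes\mathcal{O}_{B^k}(p))=0 \quad\text{for } i>0,\ p\ge 0,
$$
which is exactly Lemma \ref{lem:maintechnical1} (1) with $N$ trivial; it transfers to $X^{[k]}$ because $R^q\pi_{k,*}\mathcal{O}_{B^k}(p)=0$ for $q>0$. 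The second input is needed because we want the range $0\le j\le i$ and not just $j<i$ or $i>j$. The sequence splits into short exact pieces $0\to W_{p}\to\wedge^{j-p}H^0(L)\otimes S^pE\otimes A\to W_{p+1}\to0$, and the maps on global sections appearing in them must be surjective at the boundary step. Concretely, the boundary case $i=j$ requires the vanishing of $H^{1}$ of the first syzygy-type piece at the end of the chase. I expect to obtain this from the surjectivity of $H^0(L)\otimes H^0(S^{p}E\otimes A)\to H^0(S^{p+1}E\otimes A)$, which is equivalent to $H^1(B^k,\pi_k^*(M_{k,L}\otimes A_{k,L})\otimes\mathcal{O}_{B^k}(p))=0$. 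This vanishing is supplied by Lemma \ref{lem:maintechnical1} (2), again transferred via the identity $H^0(L)\otimes\mathcal{O}_{B^k}(p)\to\mathcal{O}(p+1)$ pulled back along $\pi_k$.

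The cleanest way to organize the argument is to prove the slightly stronger statement $H^i(B^k,\pi_k^*(\wedge^jM_{k,L}\otimes A_{k,L})\otimes\mathcal{O}_{B^k}(\ell))=0$ for $i>0$, $0\le j\le i$, $\ell\ge0$, by induction on $j$. The case $j=0$ is Lemma \ref{lem:maintechnical1} (1) and the case $j=1$ is Lemma \ref{lem:maintechnical1} (2). For $j\ge2$, I would use the filtration of $\wedge^j(H^0(L)\otimes\mathcal{O})$ restricted to $B^k$ coming from the tautological quotient $\mathcal{O}_{B^k}(1)$, namely
$$
0\to\wedge^j K\to\wedge^jH^0(L)\otimes\mathcal{O}\to\wedge^{j-1}K(1)\to0,
$$
and feed the pieces into the induction. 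Here $\ell\mapsto\ell+1$ compensates the shift, and the condition $j\le i$ is exactly what survives the index shift. The main obstacle is this induction step: the Koszul pieces on $B^k$ involve $K$ rather than $\pi_k^*M_{k,L}$, and the relative part $\Omega$ must be disposed of. I would try to handle it by showing that $R\pi_{k,*}$ of the relative contributions vanishes, so that $R\pi_{k,*}(\wedge^jK(\ell))$ is built from $\wedge^{j'}M_{k,L}\otimes S^{\ell'}E_{k,L}$. If that bookkeeping fails, I would fall back on the direct cohomology chase on $X^{[k]}$ described in the previous paragraph.
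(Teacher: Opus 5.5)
Your fallback argument is sound once one step is repaired. That argument is the Koszul chase on $X^{[k]}$ with $0\to\wedge^jM_{k,L}\to\wedge^jH^0(X,L)\otimes\mathcal{O}_{X^{[k]}}\to\cdots\to S^jE_{k,L}\to 0$ tensored with $A_{k,L}$. It is the paper's proof pushed down to $X^{[k]}$. The paper passes to $B^k$ via $R\pi_{k,*}\wedge^jM_{\mathcal{O}_{B^k}(1)}=\wedge^jM_{k,L}$ and chases $0\to\wedge^jM_{\mathcal{O}_{B^k}(1)}\to\wedge^jH^0(X,L)\otimes\mathcal{O}_{B^k}\to\wedge^{j-1}M_{\mathcal{O}_{B^k}(1)}(1)\to 0$. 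The twists of these sequences by $\mathcal{O}_{B^k}(p)$ push forward to your pieces $W_p$. So both arguments reduce to two facts:
\begin{itemize}
\item $H^{i-j}(X^{[k]},S^jE_{k,L}\otimes A_{k,L})=0$ for $j<i$;
\item for $j=i$, surjectivity of $\mu\colon H^0(X,L)\otimes H^0(S^{i-1}E_{k,L}\otimes A_{k,L})\to H^0(S^iE_{k,L}\otimes A_{k,L})$.
\end{itemize}

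The step that needs repair is your justification of this surjectivity. The vanishing of $H^1(X^{[k]},M_{k,L}\otimes S^{i-1}E_{k,L}\otimes A_{k,L})$ is not by itself what controls $\mu$. The kernel of $H^0(X,L)\otimes S^{i-1}E_{k,L}\to S^iE_{k,L}$ is an extension of $\ker(E_{k,L}\otimes S^{i-1}E_{k,L}\to S^iE_{k,L})$ by $M_{k,L}\otimes S^{i-1}E_{k,L}$. The first of these is $\pi_{k,*}$ of your relative part twisted by $\mathcal{O}_{B^k}(i-1)$. So the vanishing from Lemma \ref{lem:maintechnical1} (2) only makes $H^0(X,L)\otimes H^0(S^{i-1}E_{k,L}\otimes A_{k,L})\to H^0(E_{k,L}\otimes S^{i-1}E_{k,L}\otimes A_{k,L})$ surjective. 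You then need that $E_{k,L}\otimes S^{i-1}E_{k,L}\to S^iE_{k,L}$ is split surjective in characteristic zero, hence remains surjective on $H^0$ after twisting by $A_{k,L}$. This factorization is exactly how the paper finishes. It is also the one place where the relative part you were worried about must be dealt with, and your ``equivalent to'' glosses over it.

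You should drop the ``stronger statement'' for all $\ell\ge 0$. For $j\ge 2$ and $\ell>0$ it asks for $H^i(X^{[k]},\wedge^jM_{k,L}\otimes S^\ell E_{k,L}\otimes A_{k,L})=0$. Chasing this produces terms with two tautological factors, such as $S^2E_{k,L}\otimes S^\ell E_{k,L}\otimes A_{k,L}$ or $M_{k,L}\otimes E_{k,L}\otimes S^\ell E_{k,L}\otimes A_{k,L}$, which Lemma \ref{lem:maintechnical1} does not cover. Compare Lemma \ref{lem:maintechnical2}, where a vanishing of this type for $\wedge^2M_{2,L}$ is obtained only under the stronger bound $m\ge 3n+2$.

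The stronger statement is also unnecessary. The obstacle you name is that the sequences on $B^k$ involve your $K=M_{\mathcal{O}_{B^k}(1)}$ rather than $\pi_k^*M_{k,L}$. It disappears if, as in the paper, you chase with $\wedge^jM_{\mathcal{O}_{B^k}(1)}$ throughout. You then compare with $\wedge^jM_{k,L}$ only once, untwisted, using $R\pi_{k,*}\wedge^p\Omega=0$ for $p\ge 1$ (relative Bott vanishing). Your $\Omega$ is already $\Omega_{B^k/X^{[k]}}(1)$, so the relevant fact is this one. The version with the extra twist, $\pi_{k,*}(\wedge^p\Omega(p))=0$, is false for $1\le p\le k-1$.
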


\begin{proof}
We have a commutative diagram with exact sequences
\begin{equation}\label{eq:commdiagonB^k}\tag{$*$}
\begin{gathered}
\xymatrix{
& & &0\ar[d] & \\
& 0\ar[d] & & K_k \ar[d] &\\
0\ar[r]& \pi_k ^* M_{k,L}\ar[r]\ar[d]& H^0(X,L)\otimes \mathcal{O}_{B^k}\ar[r]\ar@{=}[d]& \pi_k ^* E_{k,L}\ar[r]\ar[d]& 0\\
0\ar[r]& M_{\mathcal{O}_{B^k}(1)}\ar[r]\ar[d]& H^0(X,L)\otimes \mathcal{O}_{B^k}\ar[r]& \mathcal{O}_{B^k}(1)\ar[r]\ar[d]& 0\\
& K_k \ar[d] & & ~0. & \\
& 0 & & &
}
\end{gathered}
\end{equation}
As the right vertical short exact sequence is a relative Euler sequence, Bott vanishing shows that $R^i \pi_{k,*} \wedge^j K_k = 0$ for $i > 0$ and $j \geq 0$ (see also \cite[Lemma 6.1]{Choi.Lacini.Park.Sheridan.25}). Then 
$$
H^i(X^{[k]}, \wedge^j M_{k,L} \otimes A_{k,L})= H^i(B^k, \wedge^j M_{\mathcal{O}_{B^k}(1)} \otimes \pi_k^* A_{k,L})~~\text{ for $i>0$ and $j \geq 0$}. 
$$
Considering the short exact sequence
$$
0 \longrightarrow \wedge^j M_{\mathcal{O}_{B^k}(1)} \longrightarrow \wedge^j H^0(X, L) \otimes \mathcal{O}_{B^k} \longrightarrow \wedge^{j-1} M_{\mathcal{O}_{B^k}(1)} \otimes \mathcal{O}_{B^k}(1) \longrightarrow 0
$$
and applying Lemma \ref{lem:maintechnical1} $(1)$ successively, we see that
$$
H^i(B^k, \wedge^j M_{\mathcal{O}_{B^k}(1)} \otimes \pi_k^* A_{k,L}) = \begin{cases} H^1(B^k, M_{\mathcal{O}_{B^k}(1)} \otimes \mathcal{O}_{B^k}(i-1) \otimes \pi_k^* A_{k,L}) & \text{for $i>0$ and $j=i$} \\
H^{i-j}(B^k, \mathcal{O}_{B^k}(j) \otimes \pi_k^* A_{k,L}) & \text{for $i>0$ and $0 \leq j \leq i-1$}. \end{cases}
$$
When $i>0$ and $0 \leq j \leq i-1$, we obtain $H^i(B^k, \wedge^j M_{\mathcal{O}_{B^k}(1)} \otimes \pi_k^* A_{k,L})=0$ from Lemma \ref{lem:maintechnical1} $(1)$. We henceforth assume that $j=i$. By Lemma \ref{lem:maintechnical1} $(1)$, the cohomology vanishing $H^1(B^k, M_{\mathcal{O}_{B^k}(1)} \otimes \mathcal{O}_{B^k}(i-1) \otimes \pi_k^* A_{k,L})=0$ for $i>0$  is equivalent to the surjectivity of the multiplication map
$$
H^0(B^k, \mathcal{O}_{B^k}(1)) \otimes H^0(B^k, \pi_k^* A_{k,L} \otimes \mathcal{O}_{B^k}(i-1)) \longrightarrow H^0(B^k, \pi_k^* A_{k,L} \otimes \mathcal{O}_{B^k}(i))
$$
for $i>0$. This map fits into a commutative diagram
$$
\xymatrix{
H^0(X, L) \otimes H^0(B^k, \pi_k^* A_{k,L} \otimes \mathcal{O}_{B^k}(i-1)) \ar[r] \ar@{=}[d] & H^0(B^k, \pi_k^* (E_{k,L} \otimes A_{k,L}) \otimes \mathcal{O}_{B^k}(i-1)) \ar[d] \\
H^0(X, L) \otimes H^0(B^k, \pi_k^* A_{k,L} \otimes \mathcal{O}_{B^k}(i-1)) \ar[r] & H^0(B^k, \pi_k^* A_{k,L} \otimes \mathcal{O}_{B^k}(i)).
}
$$
The right vertical map can be identified with the map
$$
H^0(X^{[k]}, S^{i-1} E_{k,L} \otimes E_{k,L} \otimes A_{k,L})\longrightarrow H^0(X^{[k]}, S^i E_{k,L} \otimes A_{k,L}). 
$$
This map evidently splits, so in particular, this map is surjective. Thus it suffices to check that the upper horizontal map in the above commutative diagram is surjective. By Lemma \ref{lem:maintechnical1} $(1)$, this is equivalent to the cohomology vanishing
$$
H^1(B^k, \pi_k^* ( M_{k,L} \otimes A_{k,L}) \otimes \mathcal{O}_{B^k}(i-1)) = 0~~\text{ for $i>0$}.
$$
But this holds by Lemma \ref{lem:maintechnical1} $(2)$.
\end{proof}

As was noticed in \cite[Remark 5.6]{Choi.Lacini.Park.Sheridan.25}, Theorem \ref{thm:maincohvan1} implies Theorem \ref{thm:main1}. Here we briefly explain how the implication works (see \cite[Section 5]{Choi.Lacini.Park.Sheridan.25} for more details). We keep assuming $k=2$ or $k=3$. To prove that $\sigma_k$ is normal and $\sigma_k \subseteq \mathbb{P}^r$ is projectively normal, we need to show that the map
$$
\varphi_{\ell} \colon S^{\ell} H^0(X, L) \longrightarrow H^0(B^k, \mathcal{O}_{B^k}(\ell))
$$
is surjective for $\ell \geq 1$. To this end, consider the short exact sequence
$$
0 \longrightarrow \mathcal{O}_{B^k}(-Z_{k-1}^k) \longrightarrow \mathcal{O}_{B^k} \longrightarrow \mathcal{O}_{Z_{k-1}^k} \longrightarrow 0. 
$$
Recall that $\mathcal{O}_{B^k}(-Z_{k-1}^k) = \pi_k^* A_{k,L} \otimes \mathcal{O}_{B^k}(-k)$. As there is an injective map $\alpha_{k,*} \mathcal{O}_{Z_{k-1}^k} \hookrightarrow \alpha_{k,*} \alpha_{k-1,k,*} \mathcal{O}_{B^{k-1,k}}$ and $\alpha_{k,*} \alpha_{k-1,k,*} \mathcal{O}_{B^{k-1,k}} = \alpha_{k-1,*} \widetilde{\tau}_{k-1,k,*} \mathcal{O}_{B^{k-1,k}} = \mathcal{O}_{\sigma_{k-1}}$, we get $\alpha_{k,*} \mathcal{O}_{Z_{k-1}^k} = \mathcal{O}_{\sigma_{k-1}}$ (see \cite[Lemma 5.1]{Choi.Lacini.Park.Sheridan.25}). We may assume that $\sigma_{k-1}=\sigma_{k-1}(X, L) \subseteq \mathbb{P}^r$ is projectively normal. On the other hand, by Danila's theorem \cite{Danila.07} (see also \cite[Theorem 2.8]{Choi.Lacini.Park.Sheridan.25}), the map $\varphi_{\ell}$ is an isomorphism for $1 \leq \ell \leq k$. Thus it is enough to check that
$$
S^{\ell} H^0(X, L) \otimes H^0(B^k, \mathcal{O}_{B^k}(k) \otimes \mathcal{O}_{B^k}(-Z_{k-1}^k)) \longrightarrow H^0(B^k, \mathcal{O}_{B^k}(k+\ell) \otimes \mathcal{O}_{B^k}(-Z_{k-1}^k))
$$
is surjective for $\ell \geq 1$. This map can be identified with
$$
S^{\ell} H^0(X, L) \otimes H^0(X^{[k]}, A_{k, L}) \longrightarrow H^0(X^{[k]}, S^{\ell} E_{k,L} \otimes A_{k,L}). 
$$
Since there is a long exact sequence
$$
\cdots \longrightarrow \wedge^2 M_{k,L} \otimes S^{\ell-2} H^0(X, L) \longrightarrow M_{k,L} \otimes S^{\ell-1} H^0(X, L) \longrightarrow S^{\ell} H^0(X, L) \otimes \mathcal{O}_{X^{[k]}} \longrightarrow S^{\ell} E_{k,L} \longrightarrow 0,
$$
the assertion follows from Theorem \ref{thm:maincohvan1}. We obtain the assertion $(1)$ of Theorem \ref{thm:main1}. Note also that Theorem \ref{thm:maincohvan1} shows that
$$
H^i(B^k, \mathcal{O}_{B^k}(\ell) \otimes \mathcal{O}_{B^k}(-Z_{k-1}^k))=H^i(X^{[k]}, S^{\ell-k} E_{k,L} \otimes A_{k,L})=0~~\text{ for $i>0$ and $\ell \geq 1$}. 
$$
Notice that $\mathcal{O}_{B^k}(1) = \alpha_k^* \mathcal{O}_{\sigma_k}(1)$. Thus we obtain the Du Bois-type condition
$$
R^i \alpha_{k,*} \mathcal{O}_{B^k}(-Z_{k-1}^k) = \begin{cases} \mathscr{I}_{\sigma_{k-1}/\sigma_k} & \text{for $i=0$} \\ 0 & \text{for $i>0$} ,\end{cases}
$$
and $H^i(\sigma_k, \mathscr{I}_{\sigma_{k-1}/\sigma_k}(\ell))=0$ for $i>0$ and $\ell \geq 1$. As we may assume that $H^i(\sigma_{k-1}, \mathcal{O}_{\sigma_{k-1}}(\ell))=0$ for $i>0$ and $\ell \geq 1$, we get $H^i(\sigma_k, \mathcal{O}_{\sigma_k}(\ell))=0$ for $i>0$ and $\ell \geq 1$ (see \cite[Theorem 5.4]{Choi.Lacini.Park.Sheridan.25}). This is the assertion $(3)$ of Theorem \ref{thm:main1}. Recall that $B^k$ is smooth and $Z_{k-1}^k$ has Du Bois singularities. We may assume that $\sigma_{k-1}$ has also Du Bois singularities. By Koll\'{a}r--Kov\'{a}cs criterion \cite[Corollary 6.28]{Kollar.13}, the Du Bois-type condition implies that $\sigma_k$ has Du Bois singularities. We have shown the assertion $(1)$ of Theorem \ref{thm:main1}.
The assertion $(4)$ of Theorem \ref{thm:main1} is shown in \cite[Subsections 5.3 and 5.4]{Choi.Lacini.Park.Sheridan.25}. However, in our cases $k=2$ or $3$, the proof is much simpler. If $H^i(X, \mathcal{O}_X) \neq 0$ for some $1 \leq i \leq n-1$, then it is easy to confirm that $\sigma_k$ is not Cohen--Macaulay (see \cite[Proposition 5.19]{Choi.Lacini.Park.Sheridan.25}). The difficult part is to show that if $H^i(X, \mathcal{O}_X)=0$ for $1 \leq i \leq n-1$, then 
$$
R^i \alpha_{k,*} \omega_{B^k}(Z_{k-1}^k) = 
\begin{cases} \omega_{\sigma_k} & \text{for $i=0$} \\ \omega_{\sigma_{k-1}} & \text{for $i=n$} \\ 0 & \text{otherwise}. \end{cases}
$$
This easily follows by considering the short exact sequence
$$
0 \longrightarrow \omega_{B^k} \longrightarrow \omega_{B^k}(Z_{k-1}^k) \longrightarrow \omega_{Z_{k-1}^k} \longrightarrow 0. 
$$
For the assertion $(5)$ of Theorem \ref{thm:main1}, it is sufficient to show that $\alpha_{k,*} \omega_{B^k} = \alpha_{k,*} \omega_{B^k}(Z_{k-1}^k)$ if and only if $H^0(X, \omega_X)=0$ under the assumption that $H^i(X, \mathcal{O}_X)=0$ for $1 \leq i \leq n-1$ (equivalently, $\sigma_k$ is Cohen--Macaulay). One can readily check that both conditions are equivalent to $\alpha_{k,*} \omega_{Z_{k-1}^k}=0$ (see \cite[Theorem 5.29]{Choi.Lacini.Park.Sheridan.25}).

\subsection{Proof of Theorem \ref{thm:main2}}
We keep using the notations in the previous subsection, but we assume that $k=2$ and $m \geq 3n+2$. 

\begin{lem}\label{lem:maintechnical2}
For a nef line bundle $N$ on $X^{[2]}$ and an integer $\ell \geq 0$, we have
$$
H^i(B^2, \pi_2^*(\wedge^2 M_{2, L} \otimes A_{2,L} \otimes N) \otimes \mathcal{O}_{B^2}(\ell))=0~~\text{ for $i>0$}.
$$
\end{lem}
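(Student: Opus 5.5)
We follow the pattern of Lemma \ref{lem:maintechnical1} for $k=2$ and induct on $\ell$, starting from $\ell=-1$, where the statement is trivial since $\pi_{2,*}\mathcal{O}_{B^2}(-1)=0$ and $R^i\pi_{2,*}\mathcal{O}_{B^2}(-1)=0$. Throughout, $N$ is allowed to vary over all nef line bundles on $X^{[2]}$.

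\emph{The case $\ell=0$.} Here the claim reads
$$
H^i(X^{[2]}, \wedge^2 M_{2,L}\otimes A_{2,L}\otimes N)=0 \quad\text{for } i>0.
$$
Since $\mathcal{O}_{X^{[2]}}$ is a direct summand of $\rho_{1,2,*}\mathcal{O}_{X^{[1,2]}}$, it suffices to prove the analogous vanishing on $X^{[1,2]}$. Because we work in characteristic zero, $\wedge^2 M_{2,L}$ is a direct summand of $M_{2,L}\otimes M_{2,L}$. So it is enough to have
$$
H^i\bigl(X^{[1,2]}, \rho_{1,2}^*(M_{2,L}\otimes M_{2,L}\otimes A_{2,L})\otimes \rho_{1,2}^*N\bigr)=0 \quad\text{for } i>0.
$$
This is exactly Proposition \ref{prop:mainvanishing2} $(1)$ with $L_1=L_2=L_3=L$, and it applies because $m\ge 3n+2$.

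\emph{The inductive step, $\ell\ge1$.} Twist the sequence
$$
0\to \mathcal{O}_{B^2}(-Z_1^2)\to \mathcal{O}_{B^2}\to \mathcal{O}_{Z_1^2}\to 0
$$
by $\pi_2^*(\wedge^2M_{2,L}\otimes A_{2,L}\otimes N)\otimes\mathcal{O}_{B^2}(\ell)$, and use $\mathcal{O}_{B^2}(-Z_1^2)=\pi_2^*A_{2,L}\otimes\mathcal{O}_{B^2}(-2)$. The sub term becomes
$$
\pi_2^*\bigl(\wedge^2M_{2,L}\otimes A_{2,L}\otimes (A_{2,L}\otimes N)\bigr)\otimes\mathcal{O}_{B^2}(\ell-2).
$$
Its higher cohomology vanishes by induction, since $A_{2,L}\otimes N$ is again nef: $A_{2,L}=N_{2,\omega_X\otimes H^{m-1}\otimes B}\otimes N_{2,H}$ is globally generated. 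For the quotient term, $Z_1^2=X^{[1,2]}$ and $\mathcal{O}_{B^2}(1)|_{Z_1^2}=\res_{1,2}^*L$, so we need
$$
H^i\bigl(X^{[1,2]}, \rho_{1,2}^*(\wedge^2M_{2,L}\otimes A_{2,L}\otimes N)\otimes\res_{1,2}^*L^{\ell}\bigr)=0 \quad\text{for } i>0.
$$
As $\res_{1,2}^*L^\ell$ is globally generated, hence nef, we may absorb it into the nef twist $\rho_{1,2}^*N\otimes\res_{1,2}^*L^\ell$. This is precisely why Proposition \ref{prop:mainvanishing2} $(1)$ was stated for an arbitrary nef line bundle on $X^{[1,2]}$ rather than only for pullbacks from $X^{[2]}$. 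After passing to the summand $M_{2,L}^{\otimes 2}$ as before, the vanishing again follows from Proposition \ref{prop:mainvanishing2} $(1)$.

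\emph{Points to check.} There is no substantial obstacle, since the heavy lifting is done in Proposition \ref{prop:mainvanishing2} $(1)$; the care needed is in three identifications.

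First, the restriction $\mathcal{O}_{B^2}(1)|_{Z_1^2}$ must equal $\res_{1,2}^*L$. This holds because $\alpha_2|_{Z_1^2}=\res_{1,2}$ and $\mathcal{O}_{B^2}(1)=\alpha_2^*\mathcal{O}(1)$.

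Second, the nefness of $A_{2,L}$ (used to keep the inductive hypothesis applicable) must be verified carefully. It holds because $A_{2,L}$ is globally generated once $m\ge n+2$.

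Third, the direct-summand argument needs $\wedge^2M_{2,L}\hookrightarrow M_{2,L}^{\otimes2}$ to split, and this splitting must be compatible with pullback by $\rho_{1,2}$. Both hold since antisymmetrization is functorial in characteristic zero.
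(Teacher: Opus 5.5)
Your proof is correct and follows essentially the same route as the paper. The paper also inducts on $\ell$ from $\ell=-1$, settles $\ell=0$ via the splittings $\mathcal{O}_{X^{[2]}}\subseteq\rho_{1,2,*}\mathcal{O}_{X^{[1,2]}}$ and $\wedge^2M_{2,L}\subseteq M_{2,L}^{\otimes 2}$ together with Proposition~\ref{prop:mainvanishing2}~(1), and handles the inductive step with $0\to\mathcal{O}_{B^2}(-Z_1^2)\to\mathcal{O}_{B^2}\to\mathcal{O}_{Z_1^2}\to 0$, absorbing $\res_{1,2}^*L^{\ell}$ into the nef twist.

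One harmless slip: global generation of $A_{2,L}=N_{2,\omega_X\otimes H^{m-1}\otimes B}\otimes N_{2,H}$ needs $\omega_X\otimes H^{m-1}\otimes B$ to be $1$-very ample, i.e.\ $m\ge n+3$, not $m\ge n+2$. For $X=\mathbb{P}^n$ and $L=\mathcal{O}(1)$, $A_{2,L}$ restricts to $\mathcal{O}_{\mathbb{P}^2}(-1)$ on $\ell^{[2]}$ for a line $\ell$, so it is not even nef. This does not affect your argument, since $m\ge 3n+2$.
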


\begin{proof}
The proof is identical to the proof of Lemma \ref{lem:maintechnical1}, but we include the whole proof for the reader's convenience. We proceed by induction on $\ell$ starting from $\ell=-1$. The assertion is trivial for $\ell = -1$. hen $\ell=0$, the assertion is $H^i(X^{[2]}, \wedge^2 M_{2, L} \otimes A_{2,L} \otimes N)=0$ for $i>0$, which follow from Proposition \ref{prop:mainvanishing2} $(1)$ since $\mathcal{O}_{X^{[2]}}$ is a direct summand of $\rho_{1,2,*} \mathcal{O}_{X^{[1,2]}}$ and $\wedge^2 M_{2, L}$ is a direct summand of $M_{2, L} \otimes M_{2,L}$. Assume that $\ell \geq 1$, and consider the short exact sequence
$$
0 \longrightarrow \mathcal{O}_{B^2}(-Z_1^2) \longrightarrow \mathcal{O}_{B^2} \longrightarrow \mathcal{O}_{Z_1^2} \longrightarrow 0. 
$$
Recall that $\mathcal{O}_{B^2}(-Z_2^1) = \pi_2^* A_{2,L} \otimes \mathcal{O}_{B^2}(-2)$ and $Z_1^2 = X^{[1,2]}$. For $i>0$, the desired cohomology vanishing $H^i(B^2,  \pi_2^* (\wedge^2 M_{2, L} \otimes A_{2,L} \otimes N) \otimes \mathcal{O}_{B^2}(\ell))=0$ for $i>0$ is deduced from 
$$
H^i(B^2,  \pi_2^* (\wedge^2 M_{2, L} \otimes A_{2,L}^2 \otimes N) \otimes \mathcal{O}_{B^2}(\ell-2))=0~~\text{ and }~~H^i(X^{[1,2]}, \rho_{1,2}^* (\wedge^2 M_{2, L} \otimes A_{2,L} \otimes N) \otimes \res_{1,2}^* L^{\ell})=0~~\text{ for $i>0$}.
$$
The former holds by induction, and the latter holds by Proposition \ref{prop:mainvanishing2} $(1)$.
\end{proof}

\begin{thm}\label{thm:maincohvan2}
We have
$$
H^i(X^{[2]}, \wedge^j M_{2,L} \otimes A_{2,L})=0~~\text{ for $i>0$ and $0 \leq j \leq i+1$}. 
$$
\end{thm}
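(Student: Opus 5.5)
The plan is to follow the proof of Theorem \ref{thm:maincohvan1} verbatim, with $k=2$, and to treat the one new case $j=i+1$ using Lemma \ref{lem:maintechnical2}. Note that $m \geq 3n+2 \geq 2n+2$, so Lemma \ref{lem:maintechnical1} and Theorem \ref{thm:maincohvan1} are available. The range $0 \leq j \leq i$ is already covered by Theorem \ref{thm:maincohvan1}, so only $j=i+1$ with $i \geq 1$ remains.

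\textbf{Passing to $B^2$.} Using diagram (\ref{eq:commdiagonB^k}) and $R^i\pi_{2,*}\wedge^j K_2=0$ for $i>0$, we have
$$
H^i(X^{[2]}, \wedge^{i+1} M_{2,L} \otimes A_{2,L}) = H^i(B^2, \wedge^{i+1} M_{\mathcal{O}_{B^2}(1)} \otimes \pi_2^*A_{2,L}).
$$
Chasing the Koszul-type sequences $0 \to \wedge^j M_{\mathcal{O}_{B^2}(1)} \to \wedge^j H^0(X,L)\otimes\mathcal{O}_{B^2} \to \wedge^{j-1}M_{\mathcal{O}_{B^2}(1)}(1) \to 0$ with Lemma \ref{lem:maintechnical1} (1) reduces this group to
$$
H^1(B^2, \wedge^2 M_{\mathcal{O}_{B^2}(1)} \otimes \mathcal{O}_{B^2}(i-1)\otimes \pi_2^*A_{2,L}).
$$
Since $H^1(B^2, M_{\mathcal{O}_{B^2}(1)}\otimes\mathcal{O}_{B^2}(i)\otimes\pi_2^*A_{2,L})=0$ by the $j=i$ case already proved in Theorem \ref{thm:maincohvan1}, this $H^1$ vanishes exactly when the natural map
$$
\wedge^2 H^0(X,L)\otimes H^0(B^2,\mathcal{O}_{B^2}(i-1)\otimes\pi_2^*A_{2,L}) \longrightarrow H^0(B^2, M_{\mathcal{O}_{B^2}(1)}\otimes\mathcal{O}_{B^2}(i)\otimes\pi_2^*A_{2,L})
$$
is surjective.

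\textbf{Factoring through $X^{[2]}$.} The filtration of $\wedge^2 M_{\mathcal{O}_{B^2}(1)}$ coming from $0\to\pi_2^*M_{2,L}\to M_{\mathcal{O}_{B^2}(1)}\to K_2\to 0$ has graded pieces $\pi_2^*\wedge^2 M_{2,L}$, $\pi_2^*M_{2,L}\otimes K_2$ and $\wedge^2K_2$. We twist by $\mathcal{O}_{B^2}(i-1)\otimes\pi_2^*A_{2,L}$ and show that $H^1$ of each graded piece vanishes.

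\begin{enumerate}
\item The piece $\pi_2^*(\wedge^2 M_{2,L}\otimes A_{2,L})\otimes\mathcal{O}_{B^2}(i-1)$ has vanishing $H^1$ by Lemma \ref{lem:maintechnical2}; this is exactly where $m\ge 3n+2$ enters.
\item For the other two pieces we push forward along $\pi_2$. By relative Bott vanishing on the $\mathbb{P}^1$-bundle $B^2=\mathbb{P}(E_{2,L})$, $K_2 \cong \pi_2^*\det E_{2,L}\otimes\mathcal{O}_{B^2}(-1)$ is a line bundle, so $\wedge^2K_2=0$ and $K_2(i-1) = \pi_2^*N_{2,L}\otimes\mathcal{O}_{B^2}(i-2)$.
\item The middle piece is therefore $\pi_2^*(M_{2,L}\otimes N_{2,L}\otimes A_{2,L})\otimes \mathcal{O}_{B^2}(i-2)$. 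For $i\geq 2$ this vanishes by Lemma \ref{lem:maintechnical1} (2), applied with the nef line bundle $N_{2,L}$; it is nef because $L$ is $1$-very ample, so $N_{2,L}=\det E_{2,L}$ is globally generated. For $i=1$ the twist is $\mathcal{O}(-1)$, and all cohomology vanishes since $B^2$ is a $\mathbb{P}^1$-bundle.
\end{enumerate}

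Hence $H^1(B^2, \wedge^2 M_{\mathcal{O}_{B^2}(1)}\otimes\mathcal{O}_{B^2}(i-1)\otimes\pi_2^*A_{2,L})=0$ directly. This avoids checking the surjectivity above and gives the theorem.

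\textbf{Main obstacle.} The delicate step is the identification of the graded pieces of $\wedge^2 M_{\mathcal{O}_{B^2}(1)}$. In particular we need $K_2$ to be the relative cotangent-type line bundle $\pi_2^*\det E_{2,L}(-1)$, and the twists must land inside the hypotheses of Lemmas \ref{lem:maintechnical1} and \ref{lem:maintechnical2}. If the filtration argument misbehaves, the fallback is the surjectivity formulation above: split it as in Theorem \ref{thm:maincohvan1} through $H^0(X^{[2]}, \wedge^2 E_{2,L}\otimes S^{i-1}E_{2,L}\otimes A_{2,L})$, and use the vanishing of $H^1$ of $\pi_2^*(\wedge^2M_{2,L}\otimes A_{2,L})(i-1)$ and of $\pi_2^*(M_{2,L}\otimes A_{2,L})(i-1)$ twisted by $E_{2,L}$.
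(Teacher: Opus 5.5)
Your proposal is correct and follows essentially the paper's own argument: reduce to $j=i+1$, pass to $B^2$, use the Koszul sequences and Lemma~\ref{lem:maintechnical1}~(1) to reduce to $H^1(B^2,\wedge^2 M_{\mathcal{O}_{B^2}(1)}\otimes\mathcal{O}_{B^2}(i-1)\otimes\pi_2^*A_{2,L})$, and then split $\wedge^2 M_{\mathcal{O}_{B^2}(1)}$ via $K_2=\pi_2^*N_{2,L}\otimes\mathcal{O}_{B^2}(-1)$ into pieces killed by Lemma~\ref{lem:maintechnical2} and by Lemma~\ref{lem:maintechnical1}~(2) with $N=N_{2,L}$. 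The only quibbles are minor: $K_2$ being this line bundle comes from the relative Euler sequence rather than Bott vanishing, and neither the surjectivity reformulation nor the fallback is needed.
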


\begin{proof}
In view of Theorem \ref{thm:maincohvan1}, we only need to consider the case $j=i+1$. Recall that
$$
H^i(X^{[2]}, \wedge^{i+1} M_{2,L} \otimes A_{2,L}) = H^i(B^2, \wedge^{i+1} M_{\mathcal{O}_{B^2}(1)} \otimes \pi_2^* A_{2, L})~~\text{ for $i>0$.}
$$
Considering the short exact sequence
$$
0 \longrightarrow \wedge^j M_{\mathcal{O}_{B^k}(1)} \longrightarrow \wedge^j H^0(X, L) \otimes \mathcal{O}_{B^k} \longrightarrow \wedge^{j-1} M_{\mathcal{O}_{B^k}(1)} \otimes \mathcal{O}_{B^k}(1) \longrightarrow 0
$$
and applying Lemma \ref{lem:maintechnical1} $(1)$ successively, we see that
$$
H^i(B^2, \wedge^{i+1} M_{\mathcal{O}_{B^2}(1)} \otimes \pi_2^* A_{2, L})
= H^1(B^2, \wedge^2 M_{\mathcal{O}_{B^2}(1)} \otimes \mathcal{O}_{B^2}(i-1) \otimes \pi_2^* A_{2,L})~~\text{ for $i>0$}. 
$$
Note that $K_2$ in the commutative diagram (\ref{eq:commdiagonB^k}) is the line bundle $\pi_2^* N_{2,L} \otimes \mathcal{O}_{B^2}(-1)$. Then we have a short exact sequence
$$
0 \longrightarrow  \pi_2^* \wedge^2 M_{2,L} \longrightarrow \wedge^2 M_{\mathcal{O}_{B^k}} \longrightarrow \pi_2^* (M_{2,L} \otimes N_{2,L}) \otimes \mathcal{O}_{B^2}(-1) \longrightarrow 0. 
$$
It suffices to check that
$$
H^1(B^2,  \pi_2^* (\wedge^2 M_{2,L} \otimes A_{2,L}) \otimes \mathcal{O}_{B^2}(\ell))=0~\text{ and }~H^1(B^2, \pi_2^* (M_{2,L} \otimes A_{2,L} \otimes N_{2,L}) \otimes \mathcal{O}_{B^2}(\ell-1))=0~\text{for $\ell \geq 0$}.
$$
The former holds by Lemma \ref{lem:maintechnical2}, and the latter holds by Lemma \ref{lem:maintechnical1} $(2)$.
\end{proof}

As was noticed in \cite[Remark 6.4]{Choi.Lacini.Park.Sheridan.25}, Theorem \ref{thm:maincohvan2} implies Theorem \ref{thm:main2}. Here we briefly explain how the implication works (see \cite[Section 6]{Choi.Lacini.Park.Sheridan.25} for more details). By Theorem \ref{thm:main1}, we know that $\sigma_2=\sigma_2(X, L) \subseteq \mathbb{P}^r$ is projectively normal. It is well-known that $I(\sigma_2(X, L))_2 = 0$.  
Thus $I(\sigma_2(X, L))$ is generated by cubics if and only if $K_{1,q}(\sigma_2, \mathcal{O}_{\sigma_2}(1))=0$ for $q \geq 3$. We refer to \cite[Lecture 5]{Ein.Lazarsfeld.06} for a general overview on syzygies and Koszul cohomology. 
Consider the short exact sequence
$$
0 \longrightarrow \mathscr{I}_{X/\sigma_2} \longrightarrow \mathcal{O}_{\sigma_2} \longrightarrow \mathcal{O}_X \longrightarrow 0. 
$$
By Proposition \ref{prop:cohvanA}, $K_{1, q}(X, L) = H^{q-2}(X, \wedge^{q-1} M_L \otimes L^2)=H^1(X, \wedge^2 M_L \otimes L^{q-1})=0$ for $q \geq 3$. 
Thus 
$$
K_{1,q}(\sigma_2, \mathcal{O}_{\sigma_2}(1))=H^{q-2}(\sigma_2, \wedge^{q-1} M_{\mathcal{O}_{\sigma_2}(1)} \otimes \mathcal{O}_{\sigma_2}(2))=0~~\text{ for $q \geq 3$}
$$
can be deduced from 
$$
H^{q-2}(\sigma_2, \wedge^{q-1} M_{\mathcal{O}_{\sigma_2}(1)} \otimes  \mathscr{I}_{\sigma_1/\sigma_2}(2))
= H^{q-2}(B^2, \wedge^{q-1} M_{\mathcal{O}_{B^2}(1)} \otimes \pi_2^* A_{2,L})
= H^{q-2}(X^{[2]}, \wedge^{q-1} M_{2,L} \otimes A_{2,L})=0
$$
for $q \geq 3$. But this holds by Theorem \ref{thm:maincohvan2}.

\subsection{Proof of Theorem \ref{thm:main3}}
Let $L_1:=\omega_X \otimes H^{m_1} \otimes B_1$ and $L_2:=\omega_X \otimes H^{m_2} \otimes B_2$ be line bundles on $X$ with $H$ very ample and $B_1, B_2$ nef for integers $m_1, m_2 \geq 0$, and $L:=L_1 \otimes L_2$. We assume that $m_1, m_2 \geq 3n+2$, and we consider the $2$-secant variety $\sigma_2=\sigma_2(X, L)$ of $X \subseteq \mathbb{P} H^0(X, L) = \mathbb{P}^r$. 
As $L_2$ is nef, Theorems \ref{thm:main1} and \ref{thm:main2} show that $\sigma_2 \subseteq \mathbb{P}^r$ is projectively normal and the homogeneous ideal $I(\sigma_2(X, L))$ is generated by cubics.

\begin{lem}\label{lem:maintechnical3}
We have
$$
H^2(X^{[2]}, S^2 M_{2, L_1} \otimes M_{2, L_1 \otimes L_2} \otimes N_{2, L_2})=0.
$$
\end{lem}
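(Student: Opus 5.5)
We reduce to vanishing statements of the shape in Proposition \ref{prop:mainvanishing2} (2). First, since we work in characteristic zero, $S^2 M_{2,L_1}$ is a direct summand of $M_{2,L_1} \otimes M_{2,L_1}$. It therefore suffices to prove
$$
H^2(X^{[2]}, M_{2,L_1} \otimes M_{2,L_1} \otimes M_{2,L} \otimes N_{2,L_2})=0, \qquad L = L_1 \otimes L_2.
$$
Next we remove the factor $M_{2,L}$ using the defining sequence
$$
0 \longrightarrow M_{2,L} \longrightarrow H^0(X,L)\otimes \mathcal{O}_{X^{[2]}} \longrightarrow E_{2,L} \longrightarrow 0 .
$$
Tensoring with $M_{2,L_1}^{\otimes 2}\otimes N_{2,L_2}$, the desired $H^2$ vanishing follows from two facts:
\begin{enumerate}
\item $H^2(X^{[2]}, M_{2,L_1}^{\otimes 2}\otimes N_{2,L_2})=0$, which is a direct consequence of Proposition \ref{prop:mainvanishing2} (2) with $L_1=L_2$ there and $L_3=L_2$ here, since $m_1, m_2 \geq 3n+2 \geq 3n+1$;
\item $H^1(X^{[2]}, M_{2,L_1}^{\otimes 2}\otimes E_{2,L}\otimes N_{2,L_2})=0$.
\end{enumerate}

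For (2), we push the $E_{2,L}$ factor down. Since $E_{2,L} = \rho_{1,2,*}\res_{1,2}^* L$ with $\rho_{1,2}$ finite, the projection formula gives
$$
H^1(X^{[2]}, M_{2,L_1}^{\otimes 2}\otimes E_{2,L}\otimes N_{2,L_2}) = H^1\bigl(X^{[1,2]}, \rho_{1,2}^*(M_{2,L_1}^{\otimes 2}\otimes N_{2,L_2})\otimes \res_{1,2}^* L\bigr).
$$
We then write $\rho_{1,2}^* N_{2,L_2} = \rho_{1,2}^* A_{2,L_2}\otimes \rho_{1,2}^*\mathcal{O}_{X^{[2]}}(\delta_2)$. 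The extra twist $\res_{1,2}^* L \otimes \rho_{1,2}^*\mathcal{O}(\delta_2)$ should be absorbed into a nef class, as follows. On $X^{[1,2]} = \bl_\Delta(X\times X)$ we have $\rho_{1,2}^*\delta_2 = F_1$. Hence
$$
\res_{1,2}^*L\,(F_1)\otimes \rho_{1,2}^*A_{2,L_2} = \rho_{1,2}^* N_{2,L_2}\otimes \res_{1,2}^* L ,
$$
so the object to control is really $\rho_{1,2}^*(M_{2,L_1}^{\otimes 2}\otimes N_{2,L_2})\otimes \res_{1,2}^*L$. Rewrite $N_{2,L_2} = A_{2,L_2'}\otimes N_{2,H}$ with $L_2' = \omega_X\otimes H^{m_2-1}\otimes B_2$, and note $\res_{1,2}^*L$ is nef. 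This reduces (2) to
$$
H^1(X^{[1,2]}, \rho_{1,2}^*(M_{2,L_1}^{\otimes 2}\otimes A_{2,L_2'})\otimes N)=0, \qquad N \text{ nef}.
$$
Proposition \ref{prop:mainvanishing2} (1) gives this provided $m_2-1 \ge 3n+2$. That is one short of the hypothesis, which is the main obstacle.

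To get around the loss of one unit, I would not pass through $A$ but redo the argument of Proposition \ref{prop:mainvanishing2} (1) directly with $N_{2,L_2}\otimes \res_{1,2}^*L$:
\begin{itemize}
\item lift to $X^{[1,2,3]}$ via $\mathscr{I}_{\mathcal{W}_{1,2}}$;
\item use the sequence $0\to\rho_{1,2,3}^*M_{3,L_1}\to \bl_{1,2,3}^*(\mathcal{O}_X\boxtimes\rho_{1,2}^*M_{2,L_1})\to \bl_{1,2,3}^*(L_1\boxtimes\mathcal{O})(-F_{1,2})\to 0$.
\end{itemize}
This leaves two terms. The first is $\rho_{1,2,3}^*(M_{3,L_1}\otimes N_{3,\omega_X\otimes H^{3n+1}})\otimes(\text{nef})$, which vanishes by Proposition \ref{prop:mainvanishing1} (2). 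Here we use $m_1\ge 3n+1$, the $N$-version rather than the $A$-version, with the $\res^*L$ factor absorbed into the nef part. The second is $\bl_{1,2,3}^*(L_1^2\boxtimes(\rho_{1,2}^*N_{2,L_2}\otimes \res_{1,2}^*L))(-2F_{1,2})$. Pushing it to $X^3$, it becomes $H^i(X^3,\mathscr{I}^2_{\Delta_{1,2}\cup\Delta_{1,3}}\otimes(L_1^2\boxtimes L_2L\boxtimes L_2)(\ldots))$, up to the twist by $\delta_2$ on the last two factors. This vanishes by Proposition \ref{prop:cohvanA} with $a=2$, $k=3$, since that needs only $2m_1 \ge 2n+3$ and $m_2 \ge n+2$.

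Because we only need $H^1$ and $H^2$ rather than all $i>0$, some degree of slack is available. I expect the genuinely delicate step to be this bookkeeping of the $\delta_2$/$F_1$ twist against the exceptional divisors, which must be done so that $3n+2$ suffices.
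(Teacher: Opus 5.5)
\paragraph{The gap.} Your step (1) is fine, and so is the first term of your lift to $X^{[1,2,3]}$, which Proposition~\ref{prop:mainvanishing1}~(2) handles. The gap is the second term,
$H^1(X^{[1,2,3]}, \bl_{1,2,3}^*(L_1^2 \boxtimes (\rho_{1,2}^*N_{2,L_2} \otimes \res_{1,2}^*L))(-2F_{1,2}))$.

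\begin{itemize}
\item \emph{The twist cannot be dropped.} Since $\rho_{1,2}^*\delta_2 = F_1$, the ``twist by $\delta_2$'' you set aside is $\bl_{1,2,3}^*(\mathcal{O}_X \boxtimes \mathcal{O}_{X^{[1,2]}}(-F_1))$. This is a negative twist lying over $\Delta_{2,3} \subseteq X^3$. Proposition~\ref{prop:cohvanA} says nothing about it, since it only treats $\mathscr{I}^a_{\Delta_{1,2}\cup\cdots\cup\Delta_{1,k}}$.
\item \emph{The paper's trick does not apply.} The paper meets this kind of term in Proposition~\ref{prop:mainvanishing2}~(2). There it removes the $-\delta_2$ using $\rho'_{1,2,3,*}\mathcal{O}_{X^{[1,2,3]}} = \mathcal{O}_{X^{[2,3]}} \oplus \tau_{2,3}^*\mathcal{O}_{X^{[2]}}(-\delta_2)$. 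That needs the bundle to descend to $X^{[2,3]}$. Yours does not, because $\res_{1,2}^*L$ (coming from $E_{2,L}=\rho_{1,2,*}\res_{1,2}^*L$) is not symmetric in the two points of $\eta$.
\item \emph{Direct Kawamata--Viehweg fails.} Fix $p \in X$ and a length-$3$ subscheme $\xi_0$ with $\mathfrak{m}_p^2 \subseteq \mathscr{I}_{\xi_0}$. Let $\Gamma = \{(p,\eta,\xi_0) : p \in \eta \subseteq \xi_0\} \cong \mathbb{P}^1$.
\begin{itemize}
\item $\Gamma$ is contracted by $\rho_{1,2,3}$ and by the map to $X^3$.
\item Its image in $X^{[1,2]}$ is a line in the fibre $\mathbb{P}(T_pX)$ of $F_1$, so $\bl_{1,2,3}^*(X\times F_1)$ has degree $-1$ on $\Gamma$.
\item Since $\rho_{1,2,3}^*\delta_3 = \bl_{1,2,3}^*(X\times F_1)+F_{1,2}$, $F_{1,2}$ has degree $1$ on $\Gamma$ and $K_{X^{[1,2,3]}}$ has degree $0$.
\end{itemize}
Hence your line bundle minus $K_{X^{[1,2,3]}}$ has degree $-1$ on $\Gamma$ and is not nef.
\end{itemize}

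\paragraph{The misdiagnosis.} Your ``one unit short of the hypothesis'' comes from a wrong identity: $A_{2,L_2'} \otimes N_{2,H} = N_{2,L_2}(-2\delta_2)$, not $N_{2,L_2}$. The correct identity is $A_{2,L_2} = N_{2,L_2'}\otimes N_{2,H}$. The real obstruction is structural. Writing $\rho_{1,2}^*N_{2,L_2}\otimes\res_{1,2}^*L$ as $\rho_{1,2}^*A_{2,L''}\otimes N$ forces $N$ to contain $\mathcal{O}(F_1)$, which is negative on lines in the fibres of $F_1$. No bound on $m_2$ fixes this.

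\paragraph{How the paper avoids it.} The paper never splits off $M_{2,L_1\otimes L_2}$. It pulls back to $X^{[1,2]}$ and filters the symmetric-square factor by
$0 \to \rho_{1,2}^*S^2M_{2,L_1} \to \tau_{1,2}^*S^2M_{L_1} \to (\tau_{1,2}^*M_{L_1}\otimes\res_{1,2}^*L_1)(-F_1) \to 0$.
\begin{itemize}
\item The $-F_1$ on the quotient is exactly what turns $\rho_{1,2}^*N_{2,L_2}$ into $\rho_{1,2}^*A_{2,L_2}$. After one more use of $0\to\rho_{1,2}^*M_{2,L_1}\to\tau_{1,2}^*M_{L_1}\to\res_{1,2}^*L_1(-F_1)\to 0$, the needed $H^1$ vanishings are Propositions~\ref{prop:mainvanishing2}~(1) and~\ref{prop:mainvanishing1}~(1).
\item The middle term is resolved by $0\to\tau_{1,2}^*S^2M_{L_1}\to S^2H^0(X,L_1)\otimes\mathcal{O}_{X^{[1,2]}}\to H^0(X,L_1)\otimes\tau_{1,2}^*L_1\to 0$. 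This leaves only $N$-type vanishings covered by Proposition~\ref{prop:mainvanishing1}~(1).
\end{itemize}
Your route peels off $M_{2,L}$ through $E_{2,L}$, which adds a positive twist with no compensating $-F_1$. That is what takes you outside the range of the available vanishing results.
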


\begin{proof}
Considering the short exact sequence
$$
0 \longrightarrow \rho_{1,2}^* S^2 M_{2, L_1} \longrightarrow \tau_{1,2}^* S^2 M_{L_1} \longrightarrow (\tau_{1,2}^* M_{L_1} \otimes \res_{1,2}^* L_1)(-F_1) \longrightarrow 0,
$$
we can derive the lemma from 
\begin{align*}
& H^1(X^{[1,2]}, \rho_{1,2}^* (M_{2, L_1 \otimes L_2} \otimes A_{2, L_2}) \otimes  \tau_{1,2}^* M_{L_1} \otimes \res_{1,2}^* L_1) = 0\\
& H^2(X^{[1,2]}, \rho_{1,2}^* (M_{2, L_1 \otimes L_2} \otimes N_{2, L_2}) \otimes \tau_{1,2}^* S^2 M_{L_1} )=0.
\end{align*}
For the first cohomology vanishing, considering the short exact sequence
$$
0 \longrightarrow \rho_{1,2}^* M_{2, L_1} \longrightarrow \tau_{1,2}^* M_{L_1} \longrightarrow \res_{1,2}^* L_1(-F_1) \longrightarrow 0, 
$$
we reduce the problem to 
\begin{align*}
& H^1 (X^{[1,2]}, \rho_{1,2}^* (M_{2, L_1} \otimes M_{2, L_1 \otimes L_1} \otimes A_{2,L_2}) \otimes \res_{1,2}^* L_1)=0\\
& H^1(X^{[1,2]}, \rho_{1,2}^* (M_{2, L_1 \otimes L_2} \otimes A_{2, L_2}(-\delta_2)) \otimes \res_{1,2}^*L_1^2)=0.
\end{align*}
These hold by Proposition \ref{prop:mainvanishing2} $(1)$ and Proposition \ref{prop:mainvanishing1} $(1)$. 
For the second cohomology vanishing, considering the short exact sequence
$$
0 \longrightarrow \tau_{1,2}^* S^2 M_{L_1} \longrightarrow S^2 H^0(X, L_1) \otimes \mathcal{O}_{X^{[1,2]}} \longrightarrow H^0(X, L_1) \otimes \tau_{1,2}^* L_1 \longrightarrow 0, 
$$
we reduce the problem to 
$$
H^1(X^{[1,2]}, \rho_{1,2}^* (M_{2, L_1 \otimes L_2} \otimes N_{2, L_2}) \otimes \tau_{1,2}^* L_1)=0
~\text{ and }~
H^2(X^{[1,2]}, \rho_{1,2}^*(M_{2, L_1 \otimes L_2} \otimes N_{2, L_2}))=0.
$$
These hold by Proposition \ref{prop:mainvanishing1} $(1)$.
\end{proof}

We consider the linear map 
$$
m_{L_1, L_2}^3 \colon \wedge^3 H^0(X, L_1) \otimes \wedge^3 H^0(X, L_2) \longrightarrow I(\sigma_2(X, L_1 \otimes L_2))_3
$$
given by 
$$
(s_1 \wedge s_2 \wedge s_3) \otimes (t_1 \wedge t_2 \wedge t_3) \longmapsto \sum_{\sigma \in \mathfrak{S}_3} \operatorname{sign}(\sigma) (s_{\sigma(1)} t_1) (s_{\sigma(2)} t_2) (s_{\sigma(3)} t_3). 
$$
Recall that the image of $m_{L_1, L_2}^3$ is spanned by $3 \times 3$-minors of the catalecticant matrix $\operatorname{Cat}(L_1, L_2)$. To prove Theorem \ref{thm:main3}, we only need to show that the map $m_{L_1, L_2}^3$ is surjective. This map can be identified with the multiplication map
$$
m_{L_1, L_2}^3 \colon H^0(X^{[3]}, N_{3, L_1}) \otimes H^0(X^{[3]}, N_{3,L_2}) \longrightarrow H^0(X^{[3]}, A_{3, L_1 \otimes L_2}). 
$$
We refer to \cite{Agostini.Park.25} for more details about the map $m_{L_1, L_2}^3$ and the related geometry of Hilbert schemes of points. 
As the evaluation map $H^0(X^{[3]}, N_{3, L_1}) \otimes \mathcal{O}_{X^{[3]}} \to N_{3, L_1}$ is a direct summand of the pushforward of the evaluation map $H^0(X, L_1) \otimes H^0(X^{[2]}, N_{2, L_1}) \otimes \mathcal{O}_{X^{[2,3]}} \to \res_{1,2}^* L_1 \otimes \tau_{1,2}^* N_{2,L_1}$ via $\rho_{1,2}$, it is enough to show that
$$
H^1(X^{[2,3]}, M_{\res_{1,2}^* L_1 \otimes \tau_{1,2}^* N_{2,L_1}} \otimes \rho_{1,2}^* N_{3, L_2})=0. 
$$
Note that $\rho_{1,2}^* N_{3, L_2} = (\res_{1,2}^* L_2 \otimes \tau_{1,2}^* N_{2, L_2})(-F_2)$. We have a commutative diagram with exact sequences
$$
\xymatrixrowsep{0.3in}
\xymatrixcolsep{0.21in}
\xymatrix{
& 0 \ar[d] & 0 \ar[d] & &\\
& \res_{1,2}^* M_{L_1} \otimes H^0(X^{[2]}, N_{2, L_1}) \ar[d] \ar@{=}[r] & \res_{1,2}^* M_{L_1} \otimes H^0(X^{[2]}, N_{2, L_1})  \ar[d] & & \\
0 \ar[r] &  M_{\res_{1,2}^* L_1 \otimes \tau_{1,2}^* N_{2, L_1}} \ar[d] \ar[r] & H^0(X, L_1) \otimes H^0(X^{[2]}, N_{2, L_1}) \otimes \mathcal{O}_{x^{[2,3]}} \ar[r] \ar[d] & \res_{1,2}^* L_1 \otimes \tau_{1,2}^* N_{2, L_1} \ar@{=}[d] \ar[r] & 0 \\
0 \ar[r] & \res_{1,2}^* L_1 \otimes \tau_{1,2}^* M_{N_{2, L_1}} \ar[r] \ar[d] & \res_{1,2}^* L_1 \otimes H^0(X^{[2]}, N_{2, L_1}) \ar[r] \ar[d] & \res_{1,2}^* L_1 \otimes \tau_{1,2}^* N_{2, L_1} \ar[r] & 0  \\
& 0 & ~0. & &
}
$$
From the left vertical short exact sequence, we reduce the problem to 
\begin{align*}
& H^1(X^{[2,3]}, (\res_{1,2}^* (M_{L_1} \otimes L_2) \otimes \tau_{1,2}^* N_{2, L_2})(-F_2) ) = 0\\
& H^1(X^{[2,3]}, (\res_{1,2}^* (L_1 \otimes L_2) \otimes \tau_{1,2}^* (M_{N_2, L_1} \otimes N_{2, L_2})(-F_2))=0. 
\end{align*}
The formal is equivalent to 
$$
H^1(X, M_{L_1} \otimes \wedge^2 M_{L_2} \otimes L_2)=0, 
$$
which holds by Proposition \ref{prop:cohvanA} since $\wedge^2 M_{L_2}$ is a direct summand of $M_{L_2} \otimes M_{L_2}$. 
The latter is equivalent to
$$
H^1(X^{[2]}, M_{N_{2, L_1}} \otimes M_{2, L_1 \otimes L_2} \otimes N_{2, L_2})=0.
$$
We have an exact sequence
$$
0 \longrightarrow S^2 M_{2, L_1} \longrightarrow M_{2, L_1} \otimes H^0(X, L_1) \longrightarrow \wedge^2 H^0(X, L_1) \otimes \mathcal{O}_{X^{[2]}} \longrightarrow \wedge^2 E_{2,L_1} \longrightarrow 0. 
$$
As the last nonzero map is the evaluation map $H^0(X^{[2]}, N_{2, L_1}) \otimes \mathcal{O}_{X^{[2]}} \to N_{2, L_1}$, we get a short exact sequence
$$
0 \longrightarrow S^2 M_{2, L_1} \longrightarrow M_{2, L_1} \otimes H^0(X, L_1) \longrightarrow M_{N_{2, L_1}} \longrightarrow 0. 
$$
Then the desired cohomology vanishing can be deduced from
$$
H^1(X^{[2]}, M_{2, L_1} \otimes M_{2, L_1 \otimes L_2} \otimes N_{2, L_2}) = 0~\text{ and }~ H^2(X^{[2]}, S^2 M_{2, L_1} \otimes M_{2, L_1 \otimes L_2} \otimes N_{2, L_2}) = 0.
$$
The first holds by Proposition \ref{prop:mainvanishing2} $(2)$, and the second holds by Lemma \ref{lem:maintechnical3}.

\end{document}